\newenvironment{sumibmatrix}{\left [ \begin{smallmatrix}} {\end{smallmatrix}\right ]}
\theoremstyle{definition}
\newtheorem{theo}{\bf{Theorem}}[section]
\newtheorem{defi}[theo]{\bf{Definition}}
\newtheorem{lemm}[theo]{\bf{Lemma}}
\newtheorem{cor}[theo]{\bf{Corollary}}
\newtheorem{prop}[theo]{\bf{Proposition}}
\newtheorem{rem}[theo]{\bf{Remark}}
\newtheorem{ex}[theo]{\bf{Example}}
\newtheorem*{ack*}{Acknowledgements}
\begin{document}
\title{Two FRT bialgebroids and their relations}
\author {Yudai Otsuto\thanks{Department of Mathematics, Faculty of Science, Hokkaido University, Sapporo 0600810, Japan; my.otsuto@math.sci.hokudai.ac.jp}}

\date{}
\maketitle

\begin{abstract}
We generalize the FRT construction for the quiver-theoretical quantum Yang-Baxter equation and obtain a left bialgebroid \( \mathfrak{A}(w) \). There are some relations between the left bialgebroid \( \mathfrak{A}(w) \) and a left bialgebroid \( A_{\sigma} \) by the FRT construction for the dynamical Yang-Baxter map. 
\footnote[0]{Keywords: FRT construction; left bialgebroids; Hopf algebroids; weak bialgebras; weak Hopf algebras; Hopf envelopes.}
\footnote[0]{MSC2010: Primary 16T05, 16T10, 16T20; Secondary 05C20, 05C25.}
\end{abstract}

\section{Introduction}
The quantum Yang-Baxter equation (QYBE for short) \cite{bax,mc} plays an important role in the study of bialgebras. Coquasitriangular bialgebras give birth to solutions to the QYBE. On the other hand, Faddeev, Reshetikhin, and Takhtajan \cite{frt} introduced construction of coquasitriangular bialgebras using solutions to the QYBE, called the FRT construction. This construction has been generalized with the development of the study of the QYBE and bialgebras.

The quantum dynamical Yang-Baxter equation (QDYBE for short), a generalization of the QYBE, was introduced by Gervais and Neveu \cite{gn}. Dynamical R-matrices, solutions to the QDYBE, give birth to \( \mathfrak{h} \)-bialgebroids introduced by Etingof and Varchenko \cite{etva}. If a dynamical R-matrice satisfies a certain condition, called rigidity, this \( \mathfrak{h} \)-bialgebroid has an antipode and is called an \( \mathfrak{h} \)-Hopf algebroid.

A set-theoretical analogue of the QDYBE is the dynamical Yang-Baxter map (DYBM for short) introduced by Shibukawa \cite{shibudy, shibuin}. The DYBM is a generalization of the Yang-Baxter map \cite{ets, ves} suggested by Drinfel'd \cite{dri}. Shibukawa and Takeuchi studied the FRT construction for the DYBM in \cite{shibufr,shiburi}. A left bialgebroid \( A_{\sigma} \) is obtained by this construction. If a solution \( \sigma \) satisfies rigidity, then \( A_{\sigma} \) becomes a Hopf algebroid with a bijective antipode. The notion of left bialgebroids (Takeuchi's \( \times_R \)-bialgebras) was introduced in \cite{take}. This is a genralization of the bialgebra using a non-commutative base algebra \( R \). Its comultiplication and counit are \( (R, R) \)-bimodule homomorphisms. Schauenburg \cite{schau} proposed a Hopf algebraic structure on the left bialgebroid without an antipode, called a \( \times_R \)-Hopf algebra. As a special case of the \( \times_R \)-Hopf algebra, B\"ohm and Szl\'achanyi \cite{bosz} introduced the Hopf algebroid, which has a bijective antipode. 

On the other hand, Hayashi \cite{haI} introduced the notion of face algebras. In \cite{hayas}, a coquasitriangular face algebra \( \mathfrak{A}(w) \) was constructed using a solution \( w \) to the quiver-theoretical QYBE. In addition, if a  coquasitriangular face algebra satisfies the closurability, this face algebra has a Hopf closure, which is a Hopf face algebra satisfying a certain universal property. Hayashi \cite{hayas} constructed this Hopf closure by using the double cross product and the localization of the face algebra. Later (Hopf) face algebras were integrated to weak bialgebras (weak Hopf algebras) by  B\"ohm, Nill, and Szl\'achanyi \cite{boszn}. Bennoun and Pfeiffer mentioned to the Hopf closure (they called the Hopf envelope) of the coquasitriangular weak bialgebra in \cite{benp}. Schauenburg \cite{schau} showed that a weak bialgebra (weak Hopf algebra) is a left bialgebroid (\( \times_R \)-Hopf algebra) whose base algebra is Frobenius-separable. Conversely, a left bialgebroid (\( \times_R \)-Hopf algebra) becomes a weak bialgebra (weak Hopf algebra) if its base algebra is Frobenius-separable. 

There is an interesting relation between Hayashi's generalization and Shibukawa-Takeuchi's generalization of the FRT construction. If the parameter set \( \Lambda \) of a DYBM \( \sigma \) is finite, the left bialgebroid \( A_{\sigma} \) becomes a weak bialgebra since the base algebra \( M_{\Lambda}(\mathbb{K}) \) consisting of maps from \( \Lambda \) to a field \( \mathbb{K} \) is a Frobenius-separable algebra over \( \mathbb{K} \). Matsumoto and Shimizu \cite{matsu} showed that a DYBM \( \sigma \) gives birth to a solution \( w_{\sigma} \) to the quiver-theoretical YBE and gave a weak bialgebra homomorphism \( \Phi \) from \( \mathfrak{A}(w_{\sigma}) \) to \( A_{\sigma} \). 

Shibukawa and the auther generalized the FRT construction for the DYBM to get a left bialgebroid (Hopf algebroid) \( A_{\sigma} \) that is not a weak bialgebra (weak Hopf algebra) from a DYBM with a finite parameter set \( \Lambda \). In \cite{oshibu}, we generalized \( M_{\Lambda}(\mathbb{K}) \) to an arbitrary \( \mathbb{K} \)-algebra \( L \).

It is natural to try to get a left bialgebroid \( \mathfrak{A}(w_{\sigma}) \) corresponding to the generalized left bialgebroid \( A_{\sigma} \).

The purpose of this paper is to discuss relations of two generalized FRT constructions by extending Matsumoto-Shimizu's homomorphism. Let \( R \) be an arbitrary \( \mathbb{K} \)-algebra and we denote by \( M_{\Lambda}(R) \) the \( \mathbb{K} \)-algebra consisting of maps from \( \Lambda \) to \( R \). We try to generalize Hayashi's construction to gain a left bialgebroid \( \mathfrak{A}(w_{\sigma}) \) corresponding to the generalized \( A_{\sigma} \) with the base algebra \( L = M_{\Lambda}(R) \) and construct a left bialgebroid homomorphism from \( \mathfrak{A}(w_{\sigma}) \) to \( A_{\sigma} \). We also show that the weak Hopf algebra \( A_{\sigma} \) with the Frobenius-separable base algebra \( M_{\Lambda}(R) \) becomes a Hopf closure of \( \mathfrak{A}(w_{\sigma}) \) through \( \Phi \).

The paper is organized as follows. In Section \ref{sec:lw}, we review relations between left bialgebroids (Hopf algebroids) and weak bialgebras (weak Hopf algebras) following \cite{bosz} and \cite{schau}. In Section \ref{sec:TL}, we recall a left bialgebroid (Hopf algebroid) \( A_{\sigma} \) with the base algebra \( M_{\Lambda}( R ) \) in \cite{oshibu} and introduce a left bialgebroid \( \mathfrak{A}(w) \) as a generalization of \cite{hayas}. The weak bialgebra \( \mathfrak{A}(w) \) in \cite{hayas} has the base algebra \( M_{\Lambda}( \mathbb{K} ) \) as a left bialgebroid. We generalize \( M_{\Lambda}( \mathbb{K} ) \) to \( M_{\Lambda}( R ) \) like the above left bialgebroid \( A_{\sigma} \). In Section \ref{sec:Phi}, we induce a left bialgebroid \( \mathfrak{A}(w_{\sigma}) := \mathfrak{A}(w) \) by using the setting of the left bialgebroid \( A_{\sigma} \) and construct a left bialgebroid homomorphism \( \Phi \) from \( \mathfrak{A}(w_{\sigma}) \) to \( A_{\sigma} \). As a point of difference between \cite{matsu} and this paper,  we do not use DYBMs to construct these \( \mathfrak{A}(w_{\sigma}) \) and \( \Phi \). We also give an example of the left bialgebroids \( A_{\sigma} \), \( \mathfrak{A}(w_{\sigma}) \) and \( \Phi \) not using the DYBM.  In Section \ref{sec:pro}, if the \( \mathbb{K} \)-algebra \( R \) is a Frobenius-separable \( \mathbb{K} \)-algebra and \( A_{\sigma} \) is a weak Hopf algebra, \( \mathfrak{A}(w_{\sigma}) \), \( A_{\sigma} \), and \( \Phi \) satisfy a certain universal property, called the Hopf closure. To complete this purpose, we introduce the notion of the antipode of the \( \mathbb{K} \)-algebra homomorphism whose domain is a weak bialgebra. This is a generalization of Hayashi's antipode with respect to the face algebra in \cite{hayas}. We can characterize weak bialgebras and generalize Hopf envelopes in \cite{benp} by using these antipodes.

\section{Preliminaries} \label{sec:lw}

In this section, we recall the notion of left bialgebroids (Hopf algebroids) and discuss relation with weak bialgebras (weak Hopf algebras). If the base algebra of a left bialgebroid is a Frobenius-separable algebra, the total algebra has a weak bialgebra structure. In addition, the total algebra is a weak Hopf algebra when the left bialgebroid becomes a Hopf algebroid. For more details, we refer to \cite{bosz} and \cite{schau}.

Throughout this paper, we denote by \( \mathbb{K} \) a field.

\begin{defi}
Let \( A \) and \( L \) be \( \mathbb{K} \)-algebras. A left bialgebroid (or Takeuchi's \( \times_L \)-bialgebra) \( \mathcal{A}_{L} \) is a sextuplet \(\mathcal{A}_{L} := (A,L,s_L,t_L,\Delta_L,\pi_L)\) satisfying the following conditions: 

\begin{enumerate}
\item
The maps \( s_L \colon L \to A \) and \( t_L \colon L^{op} \to A \) are \( \mathbb{K} \)-algebra homomorphisms and satisfy
\begin{equation}
s_L(l)t_L(l^{\prime}) = t_L(l^{\prime})s_L(l)\;\;(\forall l, l^{\prime} \in L). \label{def:stcom}
\end{equation}
Here \( L^{op} \) means the opposite \( \mathbb{K} \)-algebra of \( L \). These two homomorphisms make \( A \) an \( (L, L) \)-bimodule \( _L A_L \) by the following left and right \( L \)-module structures \( _L A \) and \( A_L \):
\begin{equation}
_L A \colon l \cdot a = s_L(l) a; \;\; A_L \colon a \cdot l = t_L(l) a \;\; ( l \in L, a \in A ). \label{def:ac}
\end{equation}

\item
The triple \( (_L A_L, \Delta_L, \pi_L) \) is a comonoid in the category of \( (L, L) \)-bimodules such that
\begin{align}
&a_{[1]}t_L(l) \otimes a_{[2]} = a_{[1]} \otimes a_{[2]}s_L(l); \label{def:st} \\
&\Delta_L(1_A) = 1_A \otimes 1_A; \label{def:Duni} \\
&\Delta_L(ab) = \Delta_L(a)\Delta_L(b); \label{def:Dmulti} \\
&\pi_L(1_A) = 1_L; \label{def:punit} \\
&\pi_L(as_L(\pi_L(b))) = \pi_L(ab) = \pi(at_L(\pi_L(b))) \label{def:pmulti}
\end{align}
for all \( l \in L \) and \( a, b \in A \). Here we write \(\Delta_L(a) = a_{[1]} \otimes a_{[2]}\), called Sweedler's notation. The right-hand-side of \eqref{def:Dmulti} is well defined because of \eqref{def:st}.
\end{enumerate}
We write \( \mathcal{A}_{L} = (A,L,s^A_L,t^A_L,\Delta^A_L,\pi^A_L) \) if there is a possibility of confusion. For a left bialgebroid \( \mathcal{A}_L \), these \( \mathbb{K} \)-algebras \( A \) and \( L \) are called the total algebra and the base algebra, respectively.
\end{defi}

\begin{defi}
Let \( \mathcal{A}_L = (A, L, s_L, t_L, \Delta_L, \pi_L) \) and \( \mathcal{A}^{\prime}_{L^{\prime}} = (A^{\prime}, L^{\prime}, s_{L^{\prime}}, t_{L^{\prime}}, \Delta_{L^{\prime}}, \pi_{L^{\prime}}) \) be left bialgebroids. A pair of \( \mathbb{K} \)-algebra homomorphisms \( (\Phi \colon A \to A^{\prime}, \phi \colon L \to L^{\prime}) \) is called a left bialgebroid homomorphism \( \mathcal{A}_L \to \mathcal{A}^{\prime}_{L^{\prime}} \), iff
\begin{align}
&s_{L^{\prime}} \circ \phi = \Phi \circ s_L; \label{def:sp} \\
&t_{L^{\prime}} \circ \phi = \Phi \circ t_L;  \label{def:tp} \\
&\pi_{L^{\prime}} \circ \Phi = \phi \circ \pi_L;  \label{def:piP} \\
&\Delta_{L^{\prime}} \circ \Phi = ( \Phi \otimes \Phi ) \circ \Delta_L. \label{def:Dp}
\end{align}
The map \( \Phi \otimes \Phi : A \otimes_L A \to A^{\prime} \otimes_{L^{\prime}} A^{\prime} \) makes sense because of \eqref{def:sp} and \eqref{def:tp}.
\end{defi}

Let \( \mathcal{A}_L := (A, L, s_L, t_L, \Delta_L, \pi_L) \) be a left bialgebroid and \( N \) a \( \mathbb{K} \)-algebra isomorphic to the opposite \( \mathbb{K} \)-algebra \( L^{op} \). 
We suppose that \( A \) has a \( \mathbb{K} \)-algebra anti-automorphism \( S \) satisfying
\begin{align}
S \circ t_L &= s_L; \label{St} \\
S( a_{[1]} ) a_{[2]} &= t_L \circ \pi_L \circ S( a ) \label{Sa}
\end{align}
for all \( a \in A \). The left hand side of \eqref{Sa} makes sense because of \eqref{St}. We fix a \( \mathbb{K} \)-algebra isomorphism \( \omega \colon L^{op} \to N \). Then \( A \) has left and right \( N \)-module structures \( ^NA \) and \( A^N \) through the following actions:
\begin{equation}
^N A \colon r \cdot a = a s_L \circ \omega^{-1}( n ); \;\; A^N \colon a \cdot r = a S \circ s_L \circ \omega^{-1}( n ) \;\; (a \in A, n \in N). \label{NAN}
\end{equation}
By virtue of \eqref{St}, these two actions \eqref{NAN} make \( A \) an \( (N, N) \)-bimodule. We can also define two \( \mathbb{K} \)-linear maps \( S_{A \otimes_{L} A}  \) and  \( S_{A \otimes_{N} A} \) by
\begin{align}
S_{A \otimes_{L} A} \colon A \otimes_L A \ni a \otimes b \mapsto S(b) \otimes S(a) \in A \otimes_N A; \label{SAL} \\
S_{A \otimes_{N} A} \colon A \otimes_N A \ni a \otimes b \mapsto S(b) \otimes S(a) \in A \otimes_L A. \label{SAN}
\end{align}

\begin{defi}
Let \( (\mathcal{A}_L, S) \) be a pair of a left bialgebroid \( \mathcal{A}_L \) and a \( \mathbb{K} \)-algebra anti-automorphism \( S \colon A \to A \) satisfying \eqref{St} and \eqref{Sa}. Suppose that \( S_{A \otimes_{N} A} \) has the inverse \( S_{A \otimes_{N} A}^{-1} \). We say that the pair \( (\mathcal{A}_L, S) \) is a Hopf algebroid, iff
\begin{align}
S_{A \otimes_{L} A} \circ \Delta_L \circ S^{-1} &= S_{A \otimes_{N} A}^{-1} \circ \Delta_L \circ S; \\
(\Delta_L \otimes {\rm id}_A) \circ \Delta_N &= ({\rm id}_A \otimes \Delta_N) \circ \Delta_L; \\
(\Delta_N \otimes {\rm id}_A) \circ \Delta_L &= ({\rm id}_A \otimes \Delta_L) \circ \Delta_N.
\end{align}
Here we define \( \Delta_N = S_{A \otimes_{L} A} \circ \Delta_L \circ S^{-1} \). The map \( S \) is called an antipode.
\end{defi}

We next introduce the notion of weak bialgebras and weak Hopf algebras.

\begin{defi} \label{def:WBA}
Let \( B \) a \( \mathbb{K} \)-algebra endowed with a \( \mathbb{K} \)-coalgebra structure by \( \Delta \colon B \to B \otimes_{\mathbb{K}} B \) and \( \varepsilon \colon B \to \mathbb{K} \). We say that a triple \( B := (B, \Delta, \varepsilon) \) is a weak bialgebra if the folloing conditions are satisfied:
\begin{align}
\Delta(ab) =& \Delta(a)\Delta(b); \label{def:wdm} \\
(\Delta(1) \otimes 1)(1 \otimes \Delta(1)) = 1_{(1)} \otimes 1_{(2)}& \otimes 1_{(3)} = (1 \otimes \Delta(1))(\Delta(1) \otimes 1); \label{def:D13} \\
\varepsilon( a b_{(1)} ) \varepsilon( b_{(2)} c ) = \varepsilon&( abc ) = \varepsilon( a b_{(2)} ) \varepsilon( b_{(1)} c ) \label{def:cum}
\end{align}
for all \( a, b, c \in B \). Here we write simply \( 1 = 1_B \) and use Sweedler's notation, which is written by
\begin{equation*}
\Delta(a) = a_{(1)} \otimes a_{(2)} \;\;\; {\rm and} \;\;\; (\Delta \otimes {\rm id}_B) \circ \Delta(a) = a_{(1)} \otimes a_{(2)} \otimes a_{(3)} =  ({\rm id}_B \otimes \Delta) \circ \Delta(a).
\end{equation*}
In order to avoid ambiguity, we write \( \Delta_B = \Delta \) and \( \varepsilon_B = \varepsilon \) as needed. The biopposite weak bialgebra \( B^{bop} \) of \( B \) can be defined similar to the ordinary bialgebra.

Let \( B^{\prime} \) be a weak bialgebra. A \( \mathbb{K} \)-linear map \( f \colon B \to B^{\prime} \) is called a weak bialgebra homomorphism if \( f \) is a \( \mathbb{K} \)-algebra and \( \mathbb{K} \)-coalgebra homomorphism.
\end{defi}

We introduce two maps \( \varepsilon_s \), \( \varepsilon_t \colon B \to B \) defined by
\begin{align}
\varepsilon_s( a ) &= 1_{(1)} \varepsilon( a 1_{(2)} ); \\
\varepsilon_t( a ) &= \varepsilon( 1_{(1)} a ) 1_{(2)} \;\; (a \in B).
\end{align}
These maps \( \varepsilon_s \) and \( \varepsilon_t \) are respectively called the source counital map and the target counital map. 

\begin{lemm}
The maps \( \varepsilon_s \) and \( \varepsilon_t \) satisfy
\begin{align}
&\varepsilon_s \circ \varepsilon_s = \varepsilon_s, \;\;\; \varepsilon_t \circ \varepsilon_t = \varepsilon_t; \label{lem:estid} \\
&\varepsilon_s( 1_B ) = \varepsilon_t( 1_B ) = 1_B. \label{lem:estu}
\end{align}
\end{lemm}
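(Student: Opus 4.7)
The plan is to establish both identities by first proving two short intermediate lemmas from axiom \eqref{def:cum}, namely
\[
\varepsilon(\varepsilon_s(a) b) = \varepsilon(ab) \quad\text{and}\quad \varepsilon(b\, \varepsilon_t(a)) = \varepsilon(ba)
\]
for all $a,b \in B$. Once these are in hand, idempotency is immediate: by definition
\[
\varepsilon_s(\varepsilon_s(a)) = 1_{(1)} \, \varepsilon(\varepsilon_s(a)\, 1_{(2)}),
\]
and substituting $b = 1_{(2)}$ inside the sum and applying the first intermediate identity turns the right-hand side into $1_{(1)} \varepsilon(a\, 1_{(2)}) = \varepsilon_s(a)$. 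The argument for $\varepsilon_t \circ \varepsilon_t = \varepsilon_t$ is symmetric, using the second intermediate identity.

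To prove the key identity, I would expand $\varepsilon_s(a) = 1_{(1)}\varepsilon(a\, 1_{(2)})$ and pull the scalar $\varepsilon(a\, 1_{(2)}) \in \mathbb{K}$ out of $\varepsilon$:
\[
\varepsilon(\varepsilon_s(a)\, b) = \varepsilon\bigl( 1_{(1)} \varepsilon(a\, 1_{(2)}) \, b \bigr) = \varepsilon(a\, 1_{(2)})\, \varepsilon(1_{(1)}\, b).
\]
Now apply axiom \eqref{def:cum} in the form $\varepsilon(a b_{(2)})\varepsilon(b_{(1)} c) = \varepsilon(abc)$ with $b = 1_B$ and $c = b$: this yields exactly $\varepsilon(a\, 1_{(2)})\varepsilon(1_{(1)} b) = \varepsilon(a \cdot 1 \cdot b) = \varepsilon(ab)$. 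The symmetric identity $\varepsilon(b\, \varepsilon_t(a)) = \varepsilon(ba)$ follows in the same way, invoking instead the first equality of \eqref{def:cum} with $b = 1_B$.

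The unit identities \eqref{lem:estu} are obtained directly from the counit axiom: $\varepsilon_s(1_B) = 1_{(1)}\varepsilon(1 \cdot 1_{(2)}) = 1_{(1)}\varepsilon(1_{(2)}) = 1_B$, and similarly $\varepsilon_t(1_B) = \varepsilon(1_{(1)})1_{(2)} = 1_B$. There is no serious obstacle here; the only point to watch is selecting the correct one of the two equalities in \eqref{def:cum} (the roles of $b_{(1)}$ and $b_{(2)}$ are swapped between the $\varepsilon_s$ and $\varepsilon_t$ computations), but since each of the intermediate identities uses $b = 1_B$ in a single application of \eqref{def:cum}, the whole proof reduces to a handful of lines.
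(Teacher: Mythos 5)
Your argument is correct, and since the paper states this lemma without proof (it is quoted as a standard fact about weak bialgebras), there is no in-paper argument to compare against; your route is the standard one. Your key intermediate identity $\varepsilon(\varepsilon_s(a)b)=\varepsilon(ab)$ and its companion $\varepsilon(b\,\varepsilon_t(a))=\varepsilon(ba)$ are precisely the two halves of the paper's later identity \eqref{lem:eest2}, so in effect you prove \eqref{lem:eest2} first and deduce \eqref{lem:estid} from it, which is clean and economical. One small correction: after expanding, the $\varepsilon_t$ case reads $\varepsilon(b\,\varepsilon_t(a))=\varepsilon(b\,1_{(2)})\,\varepsilon(1_{(1)}a)$, and this matches the pattern $\varepsilon(a\,b_{(2)})\varepsilon(b_{(1)}c)=\varepsilon(abc)$ with the middle entry $1_B$, i.e.\ the \emph{same} (second) equality of \eqref{def:cum} you used for $\varepsilon_s$, not the first one as you claim; the first equality specialised at $b=1_B$ yields $\varepsilon(b\,1_{(1)})\varepsilon(1_{(2)}a)$, which pairs the legs of $\Delta(1)$ the opposite way and does not match your expression. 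This is a mislabelled reference rather than a gap --- the conclusion still follows --- but since you flagged the choice of equality as the one delicate point, it is worth getting right. The unit identities and the termwise application of the intermediate identity inside the sum over $\Delta(1)$ are both fine.
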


\begin{lemm}
For an arbitrary element \( a \) in a weak bialgebra \( B \), 
\begin{align}
&1_{(1)} \varepsilon_s( a 1_{(2)} ) = \varepsilon_s( a ), \;\;\; \varepsilon_t( 1_{(1)} a ) 1_{(2)} = \varepsilon_t( a ); \\
&\Delta( \varepsilon_s( a ) ) = 1_{(1)} \otimes \varepsilon_s( a ) 1_{(2)} = 1_{(1)} \otimes 1_{(2)} \varepsilon_s( a ); \label{lem:coes} \\ 
&\Delta( \varepsilon_t( a ) ) = \varepsilon_t( a ) 1_{(1)} \otimes 1_{(2)} = 1_{(1)} \varepsilon_t( a ) \otimes 1_{(2)}; \label{lem:coet} \\
&a_{(1)} \otimes \varepsilon_s( a_{(2)} ) = a 1_{(1)} \otimes \varepsilon_s( 1_{(2)} ), \;\; \varepsilon_t( a_{(1)} ) \otimes a_{(2)} = \varepsilon_t( 1_{(1)} ) \otimes 1_{(2)} a; \label{lem:estd2} \\
&\varepsilon_s( a_{(1)} ) \otimes a_{(2)} = 1_{(1)} \otimes a 1_{(2)}, \;\; a_{(1)} \otimes \varepsilon_t( a_{(2)} ) = 1_{(1)} a \otimes 1_{(2)}. \label{lem:estd1}
\end{align}
\end{lemm}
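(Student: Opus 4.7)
The six identities split naturally into three pairs, and I would dispatch them in the order they are stated, which is also their order of increasing intricacy. Each pair is powered by a different ingredient of the weak bialgebra axioms: pure multiplicativity of $\Delta$ on the unit, coassociativity combined with \eqref{def:D13}, and finally the counital multiplicativity \eqref{def:cum}. Throughout, I use primes such as $1'_{(\cdot)}$ to denote an independent second copy of $\Delta(1)$.

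For the first pair, multiplicativity of $\Delta$ applied to $1 = 1 \cdot 1$ gives the idempotency $\Delta(1)^2 = \Delta(1)$, that is $1_{(1)} 1'_{(1)} \otimes 1_{(2)} 1'_{(2)} = 1_{(1)} \otimes 1_{(2)}$ in $B \otimes B$. Applying the $\mathbb{K}$-linear map $x \otimes y \mapsto x\,\varepsilon(ay)$ to both sides delivers $1_{(1)}\varepsilon_s(a1_{(2)}) = \varepsilon_s(a)$, and $\varepsilon_t(1_{(1)}a) 1_{(2)} = \varepsilon_t(a)$ follows by the symmetric argument.

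For the second pair \eqref{lem:coes}--\eqref{lem:coet}, apply $\Delta$ to the defining expression $\varepsilon_s(a) = 1_{(1)}\varepsilon(a 1_{(2)})$ and use coassociativity to rewrite $\Delta(1_{(1)}) \otimes 1_{(2)}$ as the three-fold tensor $1_{(1)} \otimes 1_{(2)} \otimes 1_{(3)}$. The axiom \eqref{def:D13} then offers the two factorizations $1_{(1)} \otimes 1_{(2)} 1'_{(1)} \otimes 1'_{(2)}$ and $1_{(1)} \otimes 1'_{(1)} 1_{(2)} \otimes 1'_{(2)}$ of this three-fold element; contracting the third factor with $\varepsilon(a\cdot)$ and recognizing $1'_{(1)}\varepsilon(a 1'_{(2)}) = \varepsilon_s(a)$ produces the two equivalent forms of $\Delta(\varepsilon_s(a))$. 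The identities for $\varepsilon_t$ follow by the dual argument.

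The third pair \eqref{lem:estd2}--\eqref{lem:estd1} is the most delicate, and I would establish $\varepsilon_s(a_{(1)}) \otimes a_{(2)} = 1_{(1)} \otimes a 1_{(2)}$ first. Since $\Delta \otimes \mathrm{id}$ is injective (it has the left inverse $\varepsilon \otimes \mathrm{id} \otimes \mathrm{id}$ by counitality), it is enough to check the equality after applying $\Delta \otimes \mathrm{id}$. Using the already-established \eqref{lem:coes} on the left and coassociativity on the right, the problem reduces to a three-fold tensor identity whose verification combines \eqref{def:D13}, the multiplicativity identity $\Delta(a 1_{(2)}) = \Delta(a)\Delta(1_{(2)})$, and the counital multiplicativity \eqref{def:cum}. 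The companion identity $\varepsilon_t(a_{(1)}) \otimes a_{(2)} = \varepsilon_t(1_{(1)}) \otimes 1_{(2)} a$ and the two statements of \eqref{lem:estd2} follow either by a directly parallel computation or by passing to the biopposite weak bialgebra $B^{\mathrm{bop}}$. The main obstacle is the bookkeeping of Sweedler indices in this last pair: three independent copies of $\Delta$ (two of $\Delta(1)$ and one of $\Delta(a)$) interact simultaneously, so \eqref{def:cum} and \eqref{def:D13} must be applied to the correct pairs at precisely the correct moment.
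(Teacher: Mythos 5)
The paper offers no proof of this lemma at all --- it is one of three standard weak-bialgebra lemmas recalled (in effect from \cite{boszn} and \cite{nil}) without argument --- so your proposal must stand on its own. Your first two pairs do: contracting the idempotency $\Delta(1)^2=\Delta(1)$ against $\varepsilon(a\,\cdot\,)$ gives the first line, and contracting the third leg of the two factorizations of $1_{(1)}\otimes 1_{(2)}\otimes 1_{(3)}$ supplied by \eqref{def:D13} against $\varepsilon(a\,\cdot\,)$ gives \eqref{lem:coes} and \eqref{lem:coet}. Those arguments are complete and correct.

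The third pair, however, is not proved, and the one concrete device you propose for it buys nothing. After applying $\Delta\otimes\mathrm{id}$ and using \eqref{lem:coes} on the left, the identity to be verified is $1_{(1)}\otimes 1_{(2)}\varepsilon_s(a_{(1)})\otimes a_{(2)} = 1_{(1)}\otimes 1_{(2)}\otimes a1_{(3)}$; but by \eqref{def:D13} the right-hand side equals $1_{(1)}\otimes 1_{(2)}1^{\prime}_{(1)}\otimes a1^{\prime}_{(2)}$, so the ``reduced'' statement is exactly the original identity with $\Delta(1)$ multiplied into the first two legs --- the injectivity of $\Delta\otimes\mathrm{id}$ has not simplified anything, and naming the axioms that ``combine'' to finish is not a computation. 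A direct argument that does close (here for the second half of \eqref{lem:estd1}) is
\begin{align*}
1_{(1)}a\otimes 1_{(2)} &= 1_{(1)}a_{(1)}\,\varepsilon(1_{(2)}a_{(2)})\otimes 1_{(3)} = 1_{(1)}a_{(1)}\,\varepsilon(1^{\prime}_{(1)}1_{(2)}a_{(2)})\otimes 1^{\prime}_{(2)}\\
&= 1_{(1)}a_{(1)}\otimes\varepsilon_t(1_{(2)}a_{(2)}) = a_{(1)}\otimes\varepsilon_t(a_{(2)}),
\end{align*}
using counitality together with \eqref{def:wdm} in the first step, \eqref{def:D13} in the second, the definition of $\varepsilon_t$ in the third, and $\Delta(a)=\Delta(1)\Delta(a)$ in the last; note that \eqref{def:cum} is not needed here, contrary to your plan. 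Finally, your claim that $\varepsilon_t(a_{(1)})\otimes a_{(2)}=\varepsilon_t(1_{(1)})\otimes 1_{(2)}a$ is the ``companion'' obtained by passing to $B^{\mathrm{bop}}$ is off: the biopposite exchanges the two halves of \eqref{lem:estd1} with each other and the two halves of \eqref{lem:estd2} with each other, but it does not carry \eqref{lem:estd1} to \eqref{lem:estd2} (the latter has $\varepsilon_s(1_{(2)})$ rather than a bare $1_{(2)}$ on the right), so \eqref{lem:estd2} needs its own parallel computation, which you also do not supply.
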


\begin{lemm}
We denote by \( B \) a weak bialgebra. For any \( a, b \in B \),
\begin{align}
&\varepsilon_s( a ) \varepsilon_t( b ) = \varepsilon_t( b ) \varepsilon_s( a ); \label{lem:estcomm} \\
&\varepsilon( a b ) = \varepsilon( \varepsilon_s( a ) b ), \;\;\; \varepsilon( ab ) = \varepsilon( a \varepsilon_t( b ) ); \label{lem:eest2} \\
&\varepsilon_s( a b ) = \varepsilon_s( \varepsilon_s( a ) b ), \;\;\; \varepsilon_t( ab ) = \varepsilon_t( a \varepsilon_t( b ) ); \label{lem:est2} \\
&\varepsilon_s( a ) b = b_{(1)} \varepsilon_s( a b_{(2)} ), \;\;\; a \varepsilon_t( b ) = \varepsilon_t( a_{(1)} b ) a_{(2)}; \label{lem:esco} \\
&\varepsilon_s( a ) \varepsilon_s( b ) = \varepsilon_s( a \varepsilon_s( b ) ), \;\;\; \varepsilon_t( a ) \varepsilon_t( b ) = \varepsilon_t( \varepsilon_t( a ) b ). \label{lem:estm}
\end{align}
\end{lemm}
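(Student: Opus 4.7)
The plan is to establish the identities \eqref{lem:estcomm}--\eqref{lem:estm} in an order that allows each to exploit the previous ones and the two preceding lemmas; I propose the order \eqref{lem:eest2}, \eqref{lem:est2}, \eqref{lem:esco}, \eqref{lem:estm}, \eqref{lem:estcomm}. The symmetry under $\varepsilon_s \leftrightarrow \varepsilon_t$ (inherited from the biopposite weak bialgebra $B^{bop}$ recalled in Definition~\ref{def:WBA}) means only one half of each pair needs a detailed argument. The workhorse tools are the multiplicative counit axiom \eqref{def:cum}, the formulas \eqref{lem:coes}--\eqref{lem:estd1} for $\Delta\circ\varepsilon_{s,t}$ and the two cotensor identities, and the fact that $\Delta$ is multiplicative so $\Delta(b)\Delta(1_B)=\Delta(b)$.

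For \eqref{lem:eest2}, I would unwind the definition and compute
\[
\varepsilon(\varepsilon_s(a) b) \;=\; \varepsilon\bigl(1_{(1)}\,\varepsilon(a 1_{(2)})\,b\bigr) \;=\; \varepsilon(a 1_{(2)})\,\varepsilon(1_{(1)} b) \;=\; \varepsilon(ab),
\]
the last step being the second equality of \eqref{def:cum} applied with $b\leftarrow 1_B$. The identity \eqref{lem:est2} is then immediate: $\varepsilon_s(ab) = 1_{(1)}\varepsilon(ab\,1_{(2)}) = 1_{(1)}\varepsilon(\varepsilon_s(a)\,b\,1_{(2)}) = \varepsilon_s(\varepsilon_s(a)\,b)$. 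For \eqref{lem:esco}, I would first simplify the right-hand side using $\Delta(b)\Delta(1_B) = \Delta(b)$:
\[
b_{(1)} \varepsilon_s(a b_{(2)}) \;=\; b_{(1)} 1_{(1)}\,\varepsilon(a b_{(2)} 1_{(2)}) \;=\; b_{(1)}\,\varepsilon(a b_{(2)}),
\]
reducing the problem to showing $\varepsilon_s(a)b = b_{(1)}\varepsilon(ab_{(2)})$. For this I would start from \eqref{lem:estd1}, rewritten as $1_{(1)}\otimes a\,1_{(2)} = \varepsilon_s(a_{(1)})\otimes a_{(2)}$, multiply the second tensor factor on the right by $b$, and then apply $\mathrm{id}\otimes\varepsilon$; here an appeal to \eqref{def:cum} is needed to convert $\varepsilon(a 1_{(2)} b)$-type expressions into the target form. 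For \eqref{lem:estm}, applying \eqref{lem:esco} to $\varepsilon_s(a)\varepsilon_s(b)$ and expanding $\Delta(\varepsilon_s(b))$ via \eqref{lem:coes} should collapse the result to $\varepsilon_s(a\,\varepsilon_s(b))$ after one use of \eqref{lem:est2}. Finally, for the commutativity \eqref{lem:estcomm}, substituting the definitions reduces the claim, modulo the scalars $\varepsilon(a 1_{[2]})$ and $\varepsilon(1_{[1]'} b)$, to the identity $1_{[1]} 1_{[2]'} = 1_{[2]'} 1_{[1]}$ weighted by these scalars, which falls out of \eqref{def:D13} after reindexing $\Delta^{(2)}(1_B)$ through both of its equivalent expansions.

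The main obstacle I anticipate is \eqref{lem:esco}: the bookkeeping of several independent copies of $\Delta(1)$, combined with switching between the two equivalent forms of \eqref{lem:estd1}/\eqref{lem:estd2} and the two equivalent forms of \eqref{def:cum}, is where the argument becomes genuinely delicate. The commutativity \eqref{lem:estcomm} can also be subtle as a direct computation, but once the preceding identities are in place one has enough flexibility (via the $\Delta$-formulas for $\varepsilon_s$ and $\varepsilon_t$) to rewrite both sides in a common form; the remaining identities are essentially unwinding.
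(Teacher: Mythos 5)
The paper states this lemma without proof (it is quoted from the standard weak-bialgebra literature), so there is no in-paper argument to compare against; judged on its own terms, your plan is sound in outline and four of the five identities are handled correctly. The deduction order is good, \eqref{lem:eest2} and \eqref{lem:est2} are exactly right, the reduction of the right-hand side of \eqref{lem:esco} via $\Delta(b)\Delta(1_B)=\Delta(b)$ is correct, \eqref{lem:estm} collapses as you say (though the final step uses the identity $1_{(1)}\varepsilon_s(a1_{(2)})=\varepsilon_s(a)$ from the second preliminary lemma rather than \eqref{lem:est2}), and your treatment of \eqref{lem:estcomm} via the two expansions of $\Delta^{(2)}(1_B)$ in \eqref{def:D13} is the standard and correct argument.

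The one step that does not close as written is the core of \eqref{lem:esco}. Taking the first identity of \eqref{lem:estd1}, right-multiplying the second tensor factor by $b$, and applying $\mathrm{id}\otimes\varepsilon$ yields
\begin{equation*}
1_{(1)}\,\varepsilon( a 1_{(2)} b ) \;=\; \varepsilon_s( a_{(1)} )\,\varepsilon( a_{(2)} b ),
\end{equation*}
and neither side is visibly $\varepsilon_s(a)b$ or $b_{(1)}\varepsilon(ab_{(2)})$; moreover \eqref{def:cum} is a purely scalar identity, so it cannot be applied to $\varepsilon(a1_{(2)}b)$ while the companion leg $1_{(1)}$ remains an element of $B$ --- splitting $1_{(2)}$ by coassociativity produces a three-fold $\Delta^{(2)}(1_B)$ expression that does not telescope to the target. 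The fix is to use the \emph{second} identity of \eqref{lem:estd1} instead: applying $\mathrm{id}\otimes\varepsilon(a\,\cdot)$ to $b_{(1)}\otimes\varepsilon_t(b_{(2)})=1_{(1)}b\otimes 1_{(2)}$ gives $b_{(1)}\varepsilon(a\varepsilon_t(b_{(2)}))=1_{(1)}b\,\varepsilon(a1_{(2)})=\varepsilon_s(a)b$, and the left-hand side equals $b_{(1)}\varepsilon(ab_{(2)})$ by your already-established \eqref{lem:eest2}. With that substitution the whole argument goes through.
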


\begin{defi}
A weak bialgebra \( H \) with an \( \mathbb{K} \)-linear map \( S \colon H \to H \) is called a weak Hopf algebra, iff
\begin{align}
S( h_{(1)} ) h_{(2)} = \varepsilon_s( h ); \\
h_{(1)} S( h_{(2)} ) = \varepsilon_t( h ); \\
S( h_{(1)} ) h_{(2)} S( h_{(3)} ) = S( h )
\end{align}
are satisfied for all \( h \in H \). This \( S \), also called an antipode, is unique if there exists. 
\end{defi}
If there is a possibility of confusion, we write \( S^{{\rm WHA}} \) for the antipode of a weak Hopf algebra and \( S^{{\rm HAD}} \) for an antipode of a Hopf algebroid.
                                                                                                                                                                                                                                                                                                                                                                                                                                                                                                                                                                                                                                                                                                                                                                                                                                                                         
Let us recall the notion of Frobenius-separable \( \mathbb{K} \)-algebras to discuss relations between the left bialgebroid and the weak bialgebra.
A Frobenius-separable \( \mathbb{K} \)-algebra is a \( \mathbb{K} \)-algebra \( L \) equipped with a \( \mathbb{K} \)-linear map \( \psi \colon L \to \mathbb{K} \) and an element \( e^{(1)} \otimes e^{(2)} \in L \otimes_{\mathbb{K}} L \) such that
\begin{align}
l = \psi( l e^{(1)} ) e^{(2)} = e^{(1)} \psi( e^{(2)} l ), \;\; e^{(1)} e^{(2)} = 1_L \;\; ( \forall l \in L ).
\end{align}
This pair \( (\psi, e^{(1)} \otimes e^{(2)}) \) is called an idempotent Frobenius system.

\begin{prop}(See \cite[Theorem 5.5]{schau}.) \label{prop:LWF}
Let \( \mathcal{A}_L = (A, L, s_L, t_L, \Delta_L, \pi_L) \) be a left bialgebroid. If the base algebra \( L \) is a Frobenius-separable \( \mathbb{K} \)-algebra with an idempotent Frobenius system \( (\psi, e^{(1)} \otimes e^{(2)}) \), then the total algebra \( A \) has the following weak bialgebra structure \( (A, \Delta, \varepsilon) \):
\begin{align}
\Delta( a ) &= t_L(e^{(1)}) a_{[1]} \otimes s_L(e^{(2)}) a_{[2]}; \\
\varepsilon( a ) &= \psi \circ \pi_L( a ) \;\; (a \in A).
\end{align}
\end{prop}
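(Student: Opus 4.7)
The plan is to verify directly that $(A, \Delta, \varepsilon)$ satisfies the axioms of Definition \ref{def:WBA}. The cornerstone is the Casimir-type symmetry
\begin{equation*}
l e^{(1)} \otimes e^{(2)} = e^{(1)} \otimes e^{(2)} l \quad \text{in } L \otimes_{\mathbb{K}} L
\end{equation*}
for all $l \in L$, which I would derive from the two Frobenius relations by substituting $l e^{(1)} = f^{(1)} \psi(f^{(2)} l e^{(1)})$ using a second copy $f^{(1)} \otimes f^{(2)}$ of the Casimir and collapsing $\psi(f^{(2)} l e^{(1)}) e^{(2)} = f^{(2)} l$ via $\psi(x e^{(1)}) e^{(2)} = x$. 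Applying $t_L \otimes s_L$ (with $t_L$ antimultiplicative) yields $t_L(e^{(1)}) t_L(l) \otimes s_L(e^{(2)}) = t_L(e^{(1)}) \otimes s_L(e^{(2)}) s_L(l)$ in $A \otimes_{\mathbb{K}} A$. Right-multiplying componentwise by $a \otimes b$ shows that $\iota(a \otimes_L b) := t_L(e^{(1)}) a \otimes s_L(e^{(2)}) b$ descends to a well-defined $\mathbb{K}$-linear map $\iota \colon A \otimes_L A \to A \otimes_{\mathbb{K}} A$, so $\Delta := \iota \circ \Delta_L$ makes sense as a map $A \to A \otimes_{\mathbb{K}} A$.

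The coalgebra axioms then fall out. For the counit, substitute into $(\varepsilon \otimes {\rm id})\Delta(a)$, use that $\pi_L$ is an $(L,L)$-bimodule map (so $\pi_L(t_L(l) x) = \pi_L(x) l$ and $\pi_L(s_L(l) x) = l \pi_L(x)$), and invoke the Frobenius identity $\psi(\pi_L(a_{[1]}) e^{(1)}) e^{(2)} = \pi_L(a_{[1]})$ to collapse the expression onto $s_L(\pi_L(a_{[1]})) a_{[2]} = a$; the right counit is dual. Coassociativity follows from coassociativity of $\Delta_L$ together with $\Delta_L(s_L(l)) = s_L(l) \otimes 1$ and $\Delta_L(t_L(l)) = 1 \otimes t_L(l)$ (both immediate from $\Delta_L$ being a bimodule map with \eqref{def:Duni}), reconciled by a second application of the Casimir symmetry.

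The main technical obstacle is multiplicativity \eqref{def:wdm}. The image of $\Delta_L$ lies in the Takeuchi product $A \times_L A \subseteq A \otimes_L A$ (the subset cut out by \eqref{def:st}), a genuine subring on which $\Delta_L(ab) = \Delta_L(a) \Delta_L(b)$. The key lemma would be that $\iota|_{A \times_L A}$ is an algebra homomorphism: for $x, y \in A \times_L A$ one expands $\iota(x) \iota(y)$ with two copies of the Casimir, then — using \eqref{def:st} for $x$ combined with the well-definedness of $\iota$ on $A \otimes_L A$ established above — transfers the $t_L$-factor coming from the second Casimir past $x$ to an $s_L$-factor on the right, where it meets its partner and telescopes via $s_L(e^{(1)}) s_L(e^{(2)}) = s_L(e^{(1)} e^{(2)}) = s_L(1_L) = 1_A$. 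This is precisely where the separability hypothesis $e^{(1)} e^{(2)} = 1_L$ is essential; without it the telescoping fails. Multiplicativity of $\Delta$ is then immediate.

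For the remaining weak axioms, $\Delta(1_A) = t_L(e^{(1)}) \otimes s_L(e^{(2)})$; a short calculation with two Casimirs and \eqref{def:stcom} shows that $(\Delta \otimes {\rm id})\Delta(1_A)$, $(\Delta(1_A) \otimes 1)(1 \otimes \Delta(1_A))$, and $(1 \otimes \Delta(1_A))(\Delta(1_A) \otimes 1)$ all coincide, yielding \eqref{def:D13}. For \eqref{def:cum} the scalar Frobenius identity $\psi(\alpha e^{(1)}) \psi(e^{(2)} \beta) = \psi(\alpha \beta)$ combines with \eqref{def:pmulti} and the bimodule property of $\pi_L$ to reduce $\varepsilon(a b_{(1)}) \varepsilon(b_{(2)} c)$ to $\psi \circ \pi_L(abc) = \varepsilon(abc)$; the second equality in \eqref{def:cum} is obtained symmetrically from the other Frobenius relation.
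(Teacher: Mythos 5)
The paper does not prove this proposition --- it is quoted from \cite[Theorem 5.5]{schau} --- and your argument is a correct reconstruction of the standard proof: the Casimir symmetry $l e^{(1)} \otimes e^{(2)} = e^{(1)} \otimes e^{(2)} l$ makes $\iota$ well defined on $A \otimes_L A$, the counit and coassociativity axioms reduce to the comonoid axioms for $({}_LA_L,\Delta_L,\pi_L)$, and your treatment of multiplicativity (pushing the Takeuchi relation \eqref{def:st} through $\iota$ so that the second Casimir telescopes via $s_L(e^{(1)}e^{(2)})=1_A$) is exactly where separability enters, just as in Schauenburg's proof. All steps, including the verification of \eqref{def:D13} and \eqref{def:cum} from the Frobenius identities and \eqref{def:pmulti}, check out.
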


Under the conditions of Proposition \ref{prop:LWF}, we suppose that the left bialgebroid \( \mathcal{A}_L \) has an antipode \( S^{{\rm HAD}} \). Then it is important to discuss whether the weak bialgebra \( A \) becomes a weak Hopf algebra or not. Schauenburg \cite{schau} solved this problem when \( \mathcal{A}_L \) is a \( \times_L \)-Hopf algebra, which is a generalization of the Hopf algebroid. We briefly sketch a special case of Corollary 6.2 in \cite{schau}.

For the total algebra \( A \) of a Hopf algebroid \( ( \mathcal{A}_L, S^{{\rm HAD}} ) \), we can define another left \( N \)-module structure \( _N A \) by
\begin{equation}
_N A \colon n \cdot a = S^{\mathrm{HAD}} \circ s_L \circ \omega^{-1}( n ) a \;\; (a \in A, n \in N).
\end{equation}
Then the tensor product \( A \otimes_N A \) has two meanings depending on left actions \( ^N A \) and \( _N A \). In order to avoid misunderstandings, we specify these actions by \( A ^N \otimes^N A \) and \( A ^N \otimes_N A \). For example, the tensor product \( A \otimes_N A \) in \eqref{SAL} and \eqref{SAN} stands for \( A ^N \otimes^N A \).

\begin{prop}(See \cite[Proposition 4.2(iv)]{bosz}.) \label{prop:bija}
Let \( (\mathcal{A}_L, S^{\mathrm{HAD}}) \) be a Hopf algebroid. Then the following \( \mathbb{K} \)-linear map \( \alpha \) is bijective with the inverse \( \alpha^{-1} \).
\begin{align}
&\alpha \colon A^N \otimes_N A \ni a \otimes b \mapsto a_{[1]} \otimes a_{[2]} b \in  A \otimes_L A; \\
&\alpha^{-1} \colon A \otimes_L A \ni a \otimes b \mapsto a^{[1]} \otimes S^{{\rm HAD}}( a^{[2]} ) b \in A^N \otimes_N A. \label{alpha-1}
\end{align}
These maps make sense by virtue of \( \eqref{St} \). Here we write \( \Delta_N( a ) = a^{[1]} \otimes a^{[2]} \).
\end{prop}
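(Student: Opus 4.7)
The plan is to verify that $\alpha$ and $\alpha^{-1}$ are well-defined $\mathbb{K}$-linear maps between the indicated tensor products and that they are mutually inverse; this is essentially the canonical Galois map for a Hopf algebroid, and I would follow the scheme of \cite[Proposition 4.2(iv)]{bosz}.

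First, I would establish well-definedness. For $\alpha$, the task is to show that the formula $a \otimes b \mapsto a_{[1]} \otimes a_{[2]} b$ respects the $N$-balancing $a S \circ s_L \circ \omega^{-1}(n) \otimes b = a \otimes b s_L \circ \omega^{-1}(n)$ on the source after passing into $A \otimes_L A$. Expanding $\Delta_L ( a S \circ s_L \circ \omega^{-1}(n) )$ multiplicatively via \eqref{def:Dmulti} and applying \eqref{Sa} together with \eqref{St} isolates the $s_L \circ \omega^{-1}(n)$ factor, and a single use of the $L$-balancing \eqref{def:st} then exhibits the two results as equal in $A \otimes_L A$. The well-definedness of $\alpha^{-1}$ is parallel, relying on the definition $\Delta_N = S_{A \otimes_L A} \circ \Delta_L \circ S^{-1}$ and the compatibility with $S^{\mathrm{HAD}}$ built into the Hopf algebroid axioms.

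Next I would compute the compositions. For $\alpha^{-1} \circ \alpha( a \otimes b ) = (a_{[1]})^{[1]} \otimes S^{\mathrm{HAD}}( (a_{[1]})^{[2]} ) a_{[2]} b$, applying the coassociativity axiom $(\Delta_N \otimes \mathrm{id}_A) \circ \Delta_L = (\mathrm{id}_A \otimes \Delta_L) \circ \Delta_N$ reindexes this as $a^{[1]} \otimes S^{\mathrm{HAD}}( (a^{[2]})_{[1]} ) (a^{[2]})_{[2]} b$. The identity \eqref{Sa} applied to $a^{[2]}$ collapses the middle expression to $t_L \circ \pi_L \circ S^{\mathrm{HAD}}( a^{[2]} )$, and the resulting factor is absorbed across the $N$-balancing (using \eqref{St} and \eqref{NAN}) to return $a \otimes b$. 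The reverse composition $\alpha \circ \alpha^{-1} = \mathrm{id}$ is handled symmetrically through the dual axiom $(\Delta_L \otimes \mathrm{id}_A) \circ \Delta_N = (\mathrm{id}_A \otimes \Delta_N) \circ \Delta_L$ combined with an $S^{-1}$-image of \eqref{Sa}.

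The main obstacle is the final absorption step of each composition, where a factor of the form $t_L \circ \pi_L \circ S^{\mathrm{HAD}}( a^{[2]} )$ must be moved across the $N$-balancing relation and recognized as a counital expression acting on the tensor. This requires carefully matching the right $L$-action $a \cdot l = t_L(l) a$ from \eqref{def:ac} against the $N$-actions on $A^N$ and $^N A$ from \eqref{NAN}, where the identity $S \circ t_L = s_L$ in \eqref{St} provides the precise bridge between the two module structures and delivers the reduction back to $a \otimes b$.
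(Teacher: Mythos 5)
The paper itself offers no proof of this proposition---it is imported verbatim from B\"ohm--Szl\'achanyi \cite[Proposition 4.2(iv)]{bosz}---so the only thing to measure your sketch against is the standard argument, whose skeleton you do reproduce. However, there is a concrete error at the very first step. You state the balancing relation of the domain as \( a \, S \circ s_L \circ \omega^{-1}(n) \otimes b = a \otimes b \, s_L \circ \omega^{-1}(n) \); that is the relation defining \( A^N \otimes^N A \) (right action \( A^N \) on the first factor, left action \( ^N A \) on the second). The domain of \( \alpha \) is \( A^N \otimes_N A \), whose second factor carries the \emph{other} left action \( _N A \colon n \cdot b = S^{\mathrm{HAD}} \circ s_L \circ \omega^{-1}(n) \, b \), so the correct relation is \( a \, S^{\mathrm{HAD}}(s_L(l)) \otimes b = a \otimes S^{\mathrm{HAD}}(s_L(l)) \, b \) with \( l = \omega^{-1}(n) \). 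With the relation you wrote, \( \alpha \) is not well defined at all: \( a_{[1]} \otimes a_{[2]} \, b \, s_L(l) \) and \( a_{[1]} \otimes a_{[2]} \, S^{\mathrm{HAD}}(s_L(l)) \, b \) do not agree in \( A \otimes_L A \), since \eqref{def:st} only lets you trade \( t_L(l) \) multiplying the first tensorand for \( s_L(l) \) multiplying the second from the left. With the correct relation, well-definedness reduces to \( \Delta_L( S^{\mathrm{HAD}}(s_L(l)) ) = 1_A \otimes S^{\mathrm{HAD}}(s_L(l)) \), which has to be extracted from the Hopf algebroid axiom \( \Delta_L \circ S^{\mathrm{HAD}} = S_{A \otimes_N A} \circ \Delta_N \) together with \( \Delta_N( s_L(l) ) = s_L(l) \otimes 1_A \) (itself a consequence of \eqref{St} and \( \Delta_L( t_L(l) ) = 1_A \otimes t_L(l) \)); the identities \eqref{Sa} and \eqref{def:st} that you invoke do not deliver it on their own.

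The second gap is the ``absorption'' step that you yourself flag as the main obstacle but do not carry out. After mixed coassociativity and \eqref{Sa} you arrive at \( a^{[1]} \otimes t_L\bigl( \pi_L( S^{\mathrm{HAD}}( a^{[2]} ) ) \bigr) b \), and you need this to equal \( a \otimes b \) in \( A^N \otimes_N A \). This is precisely the counit axiom for the right comonoid \( (A, \Delta_N, \pi_N) \), i.e. the statement \( a^{[1]} \cdot \pi_N( a^{[2]} ) = a \) for the right action \( A^N \), transported across the balancing of \( \otimes_N \). It cannot be obtained from \eqref{St} and \eqref{NAN} alone: \eqref{St} gives \( t_L = (S^{\mathrm{HAD}})^{-1} \circ s_L \), whereas the \( N \)-actions are built from \( S^{\mathrm{HAD}} \circ s_L \), so bridging the two requires the compatibility of \( \Delta_N \) with \( S^{\mathrm{HAD}} \) that is part of the Hopf algebroid definition, not just the two equations you cite. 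As it stands your text is an accurate table of contents for the proof in \cite{bosz}, but the two places where the argument actually has content---the identification of the correct tensor product and the counit identity for \( \Delta_N \)---are respectively wrong and missing.
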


\begin{prop}(See \cite[Corollary 6.2]{schau}.) \label{prop:WHD}
Let \( \mathcal{A}_L = (A, L, s_L, t_L, \Delta_L, \pi_L) \) be a left bialgebroid satisfying the conditions of Proposition \ref{prop:LWF}. If \( (\mathcal{A}_L, S^{{\rm HAD}}) \) is a Hopf algebroid, then the total algebra \( A \) becomes a weak Hopf algebra whose antipode \( S^{\mathrm{WHA}} \) is defined by 
\begin{equation}
S^{{\rm WHA}}( a ) = \varepsilon_s( a^{[1]} ) S^{{\rm HAD}}( a^{[2]} ) \;\; (a \in A).
\end{equation}
This \( S^{{\rm WHA}} \) makes sense because of \( \alpha^{-1} \) and the following \( \mathbb{K} \)-linear map:
\begin{align}
&\beta \colon A^N \otimes_N A \ni a \otimes b \mapsto \varepsilon_s( a ) b \in A. \label{beta}
\end{align}
This \( \beta \) is well defined because of \eqref{def:wdm} and \eqref{lem:esco}.
\end{prop}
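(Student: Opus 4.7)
The plan is to invoke Proposition \ref{prop:LWF} to endow the total algebra $A$ with its weak bialgebra structure $(A, \Delta, \varepsilon)$, and then verify the three defining axioms of a weak Hopf algebra antipode for the proposed map $S^{\rm WHA}$.

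For well-definedness, I would unpack $S^{\rm WHA}$ as essentially the composite $\beta \circ \Delta_N$: writing $\Delta_N(a) = a^{[1]} \otimes a^{[2]}$, the output $\varepsilon_s(a^{[1]}) S^{\rm HAD}(a^{[2]})$ is exactly what $\beta$ produces on the image of $\alpha^{-1}$ applied to $a \otimes 1_A$. The compatibility with the various balanced tensor product relations is controlled by \eqref{St} and the left/right actions \eqref{NAN}, together with the facts already recorded in the statement: $\alpha^{-1}$ is well-defined by Proposition \ref{prop:bija}, and $\beta$ is well-defined by \eqref{def:wdm} and \eqref{lem:esco}.

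The core of the axiom verification rests on exploiting the canonical identities
\begin{align*}
a^{[1]}{}_{[1]} \otimes a^{[1]}{}_{[2]} S^{\rm HAD}(a^{[2]}) &= a \otimes 1_A \quad \text{in } A \otimes_L A, \\
(a_{[1]})^{[1]} \otimes S^{\rm HAD}((a_{[1]})^{[2]}) a_{[2]} &= a \otimes 1_A \quad \text{in } A^N \otimes_N A,
\end{align*}
which encode $\alpha \circ \alpha^{-1} = \mathrm{id}$ and $\alpha^{-1} \circ \alpha = \mathrm{id}$. For the first axiom $S^{\rm WHA}(h_{(1)}) h_{(2)} = \varepsilon_s(h)$, I would expand the weak bialgebra comultiplication as $\Delta(h) = t_L(e^{(1)}) h_{[1]} \otimes s_L(e^{(2)}) h_{[2]}$, substitute the formula for $S^{\rm WHA}$, and collapse the resulting expression using the second canonical identity above together with the Frobenius identities $e^{(1)} \psi(e^{(2)} \ell) = \ell$ to recover $\varepsilon_s(h) = 1_{(1)} \varepsilon(h 1_{(2)})$. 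The second axiom $h_{(1)} S^{\rm WHA}(h_{(2)}) = \varepsilon_t(h)$ is proved by a symmetric calculation using the first canonical identity. The third axiom $S^{\rm WHA}(h_{(1)}) h_{(2)} S^{\rm WHA}(h_{(3)}) = S^{\rm WHA}(h)$ then follows from the first two combined with \eqref{lem:esco} and \eqref{lem:coes}.

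The main obstacle will be the simultaneous juggling of the Frobenius element $e^{(1)} \otimes e^{(2)}$, the two distinct comultiplications $\Delta_L$ and $\Delta_N$ related via $S^{\rm HAD}$ and the identity \eqref{Sa}, and the several distinct tensor products $A \otimes_L A$, $A \otimes_N A$, $A^N \otimes_N A$ through which intermediate expressions pass. The key technical point is to check at each step that the manipulations respect the appropriate bimodule balancing, making essential use of \eqref{St} and \eqref{def:st}; once this bookkeeping is handled cleanly, the axiom verifications reduce to algebraic manipulations analogous to the ordinary Hopf algebra case, with $\alpha^{-1}$ playing the role of an element-wise antipode and $\psi$ translating between the bialgebroid counit $\pi_L$ and the weak bialgebra counit $\varepsilon$.
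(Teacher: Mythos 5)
The paper does not actually prove this proposition: it is imported as a quoted result (Schauenburg, Corollary 6.2; cf.\ B\"ohm--Szl\'achanyi, Proposition 4.2), so there is no internal proof to compare against. Your outline is the standard argument from those sources and is essentially correct: identifying \( S^{\mathrm{WHA}} \) with \( \beta\circ\alpha^{-1}\circ(-\otimes 1_A) \), using the two identities encoding \( \alpha\circ\alpha^{-1}=\mathrm{id} \) and \( \alpha^{-1}\circ\alpha=\mathrm{id} \), and translating between \( \Delta_L \) and the weak-bialgebra \( \Delta \) via the Frobenius system is exactly the right mechanism.

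One soft spot: the third axiom \( S^{\mathrm{WHA}}(h_{(1)})h_{(2)}S^{\mathrm{WHA}}(h_{(3)})=S^{\mathrm{WHA}}(h) \) does \emph{not} follow formally from the first two in a general weak bialgebra. The first two reduce it to showing \( \varepsilon_s(h_{(1)})S^{\mathrm{WHA}}(h_{(2)})=S^{\mathrm{WHA}}(h) \), and the usual way to get this (via \eqref{lem:estd1}, \( \varepsilon_s(h_{(1)})\otimes h_{(2)}=1_{(1)}\otimes h1_{(2)} \), and then \( 1_{(1)}S^{\mathrm{WHA}}(h1_{(2)})=1_{(1)}S^{\mathrm{WHA}}(1_{(2)})S^{\mathrm{WHA}}(h)=\varepsilon_t(1)S^{\mathrm{WHA}}(h)=S^{\mathrm{WHA}}(h) \)) requires the anti-multiplicativity of \( S^{\mathrm{WHA}} \). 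That property is not inherited for free from \( S^{\mathrm{HAD}} \), since \( S^{\mathrm{WHA}} \) is the modified map \( \varepsilon_s(a^{[1]})S^{\mathrm{HAD}}(a^{[2]}) \); it needs its own verification (or the direct computation of the third axiom). Apart from that, your plan would go through.
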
 

\section{Two left bialgebroids \( \mathfrak{A}(w) \) and \( A_{\sigma} \)} \label{sec:TL}

\subsection{Summary of left bialgebroid \( A_{\sigma} \)} \label{sec:As}

In this subsection, we recall a left bialgebroid \( A_{\sigma} \). For more details, we refer to \cite{oshibu}. This is a generalization of \cite{shiburi}.

Let \( R \) be a \( \mathbb{K} \)-algebra and \( \Lambda \) a non-empty finite set. Let \(G\) denote the opposite group of the symmetric group on the set \( \Lambda \). We can define a right group action of this group \( G \) on the set \( \Lambda \):\( \lambda \alpha = \alpha(\lambda) \; (\lambda \in \Lambda, \alpha \in G) \). We denote by \( M_{\Lambda}(R) \) the \( \mathbb{K} \)-algebra consisting of maps from \( \Lambda \) to \( R \). For any \( \alpha \in G \), the map \( T_{\alpha} : M_{\Lambda}(R) \to M_{\Lambda}(R) \) is defined by
\begin{equation*}
T_{\alpha}(f)(\lambda) = f(\lambda \alpha) \;\; (f \in M_{\Lambda}(R), \lambda \in \Lambda).
\end{equation*}
The map \( T_{\alpha} \; (\alpha \in G) \) is a \( \mathbb{K} \)-algebra homomorphism such that \( T_{\alpha} \circ T_{\alpha^{-1}} = {\rm id}_{M_{\Lambda}(R)} \). Let \( \deg \) be a map from a finite set \( X \) to the group \( G \). Define
\begin{align*}
\Lambda X := ( M_{\Lambda}(R) \otimes_{\mathbb{K}} M_{\Lambda}(R)^{op} ) \bigsqcup \{ L_{ab} \; | \; a, b \in X \} \bigsqcup \{ (L^{-1})_{ab} \; | \; a, b \in X \}.
\end{align*}
Let \( \sigma^{ab}_{cd} \in M_{\Lambda}(R) \; (a, b, c, d \in X) \) and we denote by \( \mathbb{K} \langle \Lambda X \rangle \) the free \( \mathbb{K} \)-algebra generated by the set \( \Lambda X \). The symbol \( I_{\sigma} \) means the two-sided ideal of \( \mathbb{K} \langle \Lambda X \rangle \) whose generators are: 

\begin{itemize}

\item[(1)]
\(\xi+\xi^{\prime} - (\xi+\xi^{\prime}),\;c\xi - (c\xi),\;\xi\xi^{\prime} - (\xi\xi^{\prime})\;\;(\forall c\in\mathbb{K},\forall \xi,\xi^{\prime}\in M_{\Lambda}(R)\otimes_{\mathbb{K}} M_{\Lambda}(R)^{op}).\) \\
Here the notation \( \xi+\xi^{\prime} \) stands for the addition in the algebra \( \mathbb{K} \langle \Lambda X \rangle \), while the notation \( ( \xi+\xi^{\prime} ) ( \in \Lambda X ) \) is that of the algebra \( M_{\Lambda}(R) \otimes_{\mathbb{K}} M_{\Lambda}(R)^{op} \). The other two generators for the scalar multiplication and multiplication are similar.

\item[(2)]
\(\displaystyle\sum_{c\in X} L_{ac}(L^{-1})_{cb}-\delta_{a, b} \emptyset ,\;\sum_{c\in X} (L^{-1})_{ac}L_{cb}-\delta_{a, b} \emptyset \;\; (\forall a,b\in X). \) \\
Here \( \delta_{a, b} \in \mathbb{K} \; (a, b \in X) \) means Kronecker's delta symbol and \( \emptyset \) means the empty word.

\item[(3)]
\((T_{\deg (a)}(f)\otimes1_{M_{\Lambda}(R)})L_{ab} - L_{ab}(f\otimes1_{M_{\Lambda}(R)}), \\
   (1_{M_{\Lambda}(R)} \otimes T_{\deg (b)}(f))L_{ab} - L_{ab}(1_{M_{\Lambda}(R)}\otimes f), \\
   (f\otimes1_{M_{\Lambda}(R)})(L^{-1})_{ab} - (L^{-1})_{ab}(T_{\deg (b)}(f)\otimes1_{M_{\Lambda}(R)}), \\
   (1_{M_{\Lambda}(R)}\otimes f)(L^{-1})_{ab} - (L^{-1})_{ab}(1_{M_{\Lambda}(R)}\otimes T_{\deg (a)}(f))\;\;(\forall f\in M_{\Lambda}(R), \forall a,b\in X).\) \\

\item[(4)]
\(\displaystyle\sum_{x,y\in X} (\sigma^{xy}_{ac}\otimes1_{M_{\Lambda}(R)})L_{yd}L_{xb} - \sum_{x,y\in X} (1_{M_{\Lambda}(R)}\otimes\sigma^{bd}_{xy})L_{cy}L_{ax}\;\;(\forall a,b,c,d\in X).\) \\

\item[(5)]
\(\emptyset - 1_{M_{\Lambda}(R)}\otimes1_{M_{\Lambda}(R)}\). \\
\end{itemize}

\begin{theo}(See \cite[Theorem 2.1]{oshibu}.)
If the following conditions are satisfied, then the quotient \( A_{\sigma} := \mathbb{K} \langle \Lambda X \rangle / I_{\sigma} \) is a left bialgebroid.
\begin{equation}
\begin{aligned}
\begin{cases}
\sigma^{ab}_{cd} (\lambda) \in Z(R) \;\; ( \forall \lambda \in \Lambda, \forall a, b, c, d \in X ) ; \\
\lambda \deg(d) \deg(b) \neq \lambda \deg(c) \deg(a) \Rightarrow \sigma^{bd}_{ac} (\lambda) = 0. \label{cond:invc}
\end{cases}
\end{aligned}
\end{equation}
Here \( Z(R) \) is the center of \( R \).
\end{theo}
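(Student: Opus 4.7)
The plan is to lift the candidate structure to the free algebra $\mathbb{K}\langle \Lambda X\rangle$, then check that all five types of generators of $I_\sigma$ lie in the kernel of each structure map, and finally verify the bialgebroid axioms on generators (from which they extend multiplicatively).

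First I would define the source and target maps by
\begin{equation*}
s_L(f) = (f \otimes 1_{M_\Lambda(R)}) + I_\sigma, \qquad t_L(f) = (1_{M_\Lambda(R)} \otimes f) + I_\sigma,
\end{equation*}
which are $\mathbb{K}$-algebra (resp.\ anti-algebra) maps $M_\Lambda(R) \to A_\sigma$ making the ideal relations of type (1) and (5) tautological, and whose images commute by construction; this gives \eqref{def:stcom} and the $(L,L)$-bimodule structure \eqref{def:ac} on $A_\sigma$. Next I would define the comultiplication and counit on generators by
\begin{align*}
\Delta_L(L_{ab}) &= \sum_{c \in X} L_{ac} \otimes L_{cb}, & \Delta_L((L^{-1})_{ab}) &= \sum_{c \in X} (L^{-1})_{cb} \otimes (L^{-1})_{ac}, \\
\Delta_L(f\otimes g) &= (f \otimes 1) \otimes (1 \otimes g), & \pi_L(L_{ab}) &= \delta_{a,b}\, 1_{M_\Lambda(R)},
\end{align*}
with $\pi_L((L^{-1})_{ab}) = \delta_{a,b}\,1_{M_\Lambda(R)}$ and $\pi_L(f \otimes g) = fg$, extended to the free algebra as an algebra map for $\Delta_L$ (using the tensor product over $L$ with source/target actions \eqref{def:st}) and multiplicatively for $\pi_L$ subject to \eqref{def:pmulti}. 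The compatibility condition \eqref{def:st} on generators forces the $f$'s appearing on the left and right tensor factors to be transported by $T_{\deg(\cdot)}$, which is exactly what the type-(3) generators encode.

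Then I would check that each of the five families of generators of $I_\sigma$ is annihilated by both $\Delta_L$ (modulo $I_\sigma \otimes A_\sigma + A_\sigma \otimes I_\sigma$) and $\pi_L$. Types (1), (2), (5) are formal or immediate from the choice of $\Delta_L$ on $L^{\pm 1}$. The type-(3) relations need the straightening identities for $T_{\deg(a)}$ together with \eqref{def:st}; this is where the degree map enters, and where the first condition $\sigma^{ab}_{cd}(\lambda) \in Z(R)$ is used so that the elements $\sigma^{ab}_{cd}$ can be freely commuted past the other factors of $M_\Lambda(R)$ inside the tensor product.

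The main obstacle, as always for FRT-type constructions, is relation (4) — the quiver-theoretical QYBE relation. Here I would compute $\Delta_L$ of both sides of (4) and reorganize using the type-(3) relations to bring all $\sigma$-factors to the appropriate tensor slot; the equality in $A_\sigma \otimes_L A_\sigma$ should reduce, after a double sum rearrangement, to an identity of the form $\sum (\sigma \otimes 1) LL \otimes LL = \sum (1 \otimes \sigma) LL \otimes LL$ in which the inner tensor slot is again a type-(4) relation. The degree vanishing hypothesis $\lambda\deg(d)\deg(b) \neq \lambda\deg(c)\deg(a) \Rightarrow \sigma^{bd}_{ac}(\lambda)=0$ is precisely what is needed to guarantee that the degrees of the $L_{\star\star}$'s appearing in intermediate sums match on both sides, so that the tensor balancing over $L$ is legitimate; without it the two expressions would live in genuinely different components. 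Once \eqref{def:Dmulti} is verified on generators via this cancellation, \eqref{def:Duni}, \eqref{def:punit}, coassociativity, and counitality are routine checks on generators, completing the verification that $(A_\sigma, M_\Lambda(R), s_L, t_L, \Delta_L, \pi_L)$ is a left bialgebroid.
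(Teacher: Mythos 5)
Your overall skeleton (lift the structure maps to the free algebra $\mathbb{K}\langle\Lambda X\rangle$, check that each of the five families of generators of $I_\sigma$ is respected, then verify the axioms on generators) is the same as the construction the paper recalls from the reference for this theorem. There is, however, a genuine gap in your treatment of the counit. The counit $\pi_L$ of a left bialgebroid is \emph{not} an algebra homomorphism --- it only satisfies the weaker condition \eqref{def:pmulti} --- so ``extended multiplicatively for $\pi_L$ subject to \eqref{def:pmulti}'' is not a well-posed definition, and in fact the naive multiplicative extension of your generator values is inconsistent with the ideal relations. Concretely, the type-(3) generator gives $(1_{M_\Lambda(R)}\otimes T_{\deg(b)}(f))L_{ab}=L_{ab}(1_{M_\Lambda(R)}\otimes f)$ in $A_\sigma$; with your assignments $\pi_L(L_{ab})=\delta_{a,b}1_{M_\Lambda(R)}$ and $\pi_L(1\otimes f)=f$, multiplicativity would return $\delta_{a,b}T_{\deg(a)}(f)$ from the left-hand side and $\delta_{a,b}f$ from the right-hand side, which differ in general. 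The construction actually used is to define an honest $\mathbb{K}$-algebra homomorphism $\overline{\chi}\colon\mathbb{K}\langle\Lambda X\rangle\to\mathrm{End}_{\mathbb{K}}(M_\Lambda(R))$ with $\overline{\chi}(L_{ab})=\delta_{a,b}T_{\deg(a)}$ (the twist by $T_{\deg(a)}$ is essential and is invisible in your $\pi_L(L_{ab})=\delta_{a,b}1_{M_\Lambda(R)}$), to check $\overline{\chi}(I_\sigma)=\{0\}$, and only then to set $\pi_{M_\Lambda(R)}(a)=\chi(a)(1_{M_\Lambda(R)})$.

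This also explains why you have misplaced the role of the hypothesis \eqref{cond:invc}. The second (invariance) condition is not what makes the comultiplication balance over the type-(4) generators; that computation goes through using the ideal relations and the right ideal $I_2$ alone, exactly as in the parallel proof that $\overline{\nabla}(\mathfrak{I}_{\mathbf{w}})\subset\mathfrak{I}_2$ in Section 3.2. Where both conditions of \eqref{cond:invc} are genuinely needed is in showing $\overline{\chi}(I_\sigma)=\{0\}$ on the type-(4) generators: applying $\overline{\chi}$ to the two sides and summing yields $\rho_l(\sigma^{bd}_{ac})\,T_{\deg(d)}T_{\deg(b)}$ versus $\rho_r(\sigma^{bd}_{ac})\,T_{\deg(c)}T_{\deg(a)}$, and evaluating on $f\in M_\Lambda(R)$ at $\lambda$ gives $\sigma^{bd}_{ac}(\lambda)f(\lambda\deg(d)\deg(b))$ versus $f(\lambda\deg(c)\deg(a))\sigma^{bd}_{ac}(\lambda)$; these agree for all $f$ precisely when $\sigma^{bd}_{ac}(\lambda)$ is central and vanishes whenever $\lambda\deg(d)\deg(b)\neq\lambda\deg(c)\deg(a)$. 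Until the counit is set up through such a representation, the verifications of \eqref{def:pmulti} and of counitality cannot be completed, so the proposal as written does not yet constitute a proof.
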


The maps \( s_{M_{\Lambda}(R)} \colon M_{\Lambda}(R) \to A_{\sigma} \) and \( t_{M_{\Lambda}(R)} \colon M_{\Lambda}(R)^{op} \to A_{\sigma} \) are defined by
\begin{align*}
&s_{M_{\Lambda}(R)}(f) = f \otimes 1_{M_{\Lambda}(R)} + I_{\sigma}; \\
&t_{M_{\Lambda}(R)}(f) = 1_{M_{\Lambda}(R)} \otimes f + I_{\sigma} \;\; (f \in M_{\Lambda}(R)).
\end{align*}
These are \( \mathbb{K} \)-algebra homomorphisms and satisfy \eqref{def:stcom}. Thus \( A_{\sigma} \) is an \( (M_{\Lambda}(R), M_{\Lambda}(R)) \)-bimodule via \eqref{def:ac}.

Let \( I_2 \) denote the right ideal of \( A_{\sigma} \) whose generators are \( t_{M_{\Lambda}(R)}( f ) \otimes 1_{A_{\sigma}} - 1_{A_{\sigma}} \otimes s_{M_{\Lambda}(R)}( f ) \; (\forall f \in M_{\Lambda}(R)) \). The \( \mathbb{K} \)-algebra homomorphism \( \overline{\Delta} \colon \mathbb{K} \langle \Lambda X \rangle \to A_{\sigma} \otimes_{\mathbb{K}} A_{\sigma} \) is defined by
\begin{align*}
&\overline{\Delta}(\xi) = s_{M_{\Lambda}(R)} \otimes t_{M_{\Lambda}(R)}(\xi) \;\; (\xi \in M_{\Lambda}(R) \otimes_{\mathbb{K}} M_{\Lambda}(R)^{op}); \\
&\overline{\Delta}(L_{ab}) = \sum_{c \in X} L_{ac} + I_{\sigma} \otimes L_{cb} + I_{\sigma} \;\; (a,b \in X); \\
&\overline{\Delta}((L^{-1})_{ab}) = \sum_{c \in X} (L^{-1})_{cb} + I_{\sigma} \otimes (L^{-1})_{ac} + I_{\sigma}.
\end{align*}
This map \( \overline{\Delta} \) satisfies \( \overline{\Delta}( I_{\sigma} ) \subset I_2 \). Thus the \( \mathbb{K} \)-linear map \( \tilde{\Delta}( \alpha + I_{\sigma} ) = \overline{\Delta}( \alpha ) + I_2 \; (\alpha \in \mathbb{K} \langle \Lambda X \rangle) \) is well defined. Since \( A_{\sigma} \otimes_{\mathbb{K}} A_{\sigma} / I_2 \cong A_{\sigma} \otimes_{M_{\Lambda}(R)} A_{\sigma} \) as \( \mathbb{K} \)-vector spaces, we can induce the \( \mathbb{K} \)-linear map \( \Delta_{M_{\Lambda}(R)} \colon A_{\sigma} \to A_{\sigma} \otimes_{M_{\Lambda}(R)} A_{\sigma} \) from the map \( \tilde{\Delta} \). This \( \Delta_{M_{\Lambda}(R)} \) is an \( (M_{\Lambda}(R), M_{\Lambda}(R)) \)-bimodule homomorphism. 

The next is to define the map \( \pi_{M_{\Lambda}(R)} \colon A_{\sigma} \to M_{\Lambda}(R) \). The \( \mathbb{K} \)-algebra homomorphism \( \overline{\chi} \colon \mathbb{K} \langle \Lambda X \rangle \to {\rm End}_{\mathbb{K}}( M_{\Lambda}(R) ) \) is defined by
\begin{align*}
&\overline{\chi}(f \otimes g) = \rho_l( f ) \rho_r(g) \; (f, g \in M_{\Lambda}(R)) \\
&\overline{\chi} ( L_{ab} ) = \delta_{a, b} T_{\deg (a)}; \\
&\overline{\chi} ( (L^{-1})_{ab} ) = \delta_{a, b} T_{\deg (a)^{-1}} \;\; ( a, b \in X )
\end{align*}
Here \( \rho_l (f) \) and \( \rho_r (f) \; (f \in M_{\Lambda}(R)) \) are maps defined by
\begin{equation*}
\rho_l(f) \colon M_{\Lambda}(R) \ni g \mapsto fg \in M_{\Lambda}(R); \;\; \rho_r(f) \colon M_{\Lambda}(R) \ni g \mapsto gf \in M_{\Lambda}(R).
\end{equation*}

Because \( \overline{\chi}( I_{\sigma} ) = \{ 0 \} \) is satisfied, the map \( \chi(\alpha + I_{\sigma}) = \overline{\chi}(\alpha) \; (\alpha \in \mathbb{K} \langle \Lambda X \rangle) \) makes sense and is a \( \mathbb{K} \)-algebra homomorphism. We define the map \( \pi_{M_{\Lambda}(R)} \) by
\begin{equation}
\pi_{M_{\Lambda}(R)} \colon A_{\sigma} \ni a \mapsto \chi(a)( 1_{M_{\Lambda}(R)} ) \in M_{\Lambda}(R).
\end{equation}
This \( \pi_{M_{\Lambda}(R)} \) is an \( (M_{\Lambda}(R), M_{\Lambda}(R)) \)-bimodule homomorphism.

The triplet \( (A_{\sigma}, \Delta_{M_{\Lambda}(R)}, \pi_{M_{\Lambda}(R)}) \) is a comonoid in the tensor category of \( (M_{\Lambda}(R), M_{\Lambda}(R)) \)-bimodules. Since the maps \( \Delta_{M_{\Lambda}(R)} \) and \( \pi_{M_{\Lambda}(R)} \) satisfy the conditions \eqref{def:st} - \eqref{def:pmulti},  the sextuplet \( (A_{\sigma}, M_{\Lambda}(R), s_{M_{\Lambda}(R)}, t_{M_{\Lambda}(R)}, \Delta_{M_{\Lambda}(R)}, \pi_{M_{\Lambda}(R)}) \) is a left bialgebroid.

Let \( \sigma = \{ \sigma^{ab}_{cd} \}_{a, b, c, d \in X} \). This left bialgebroid \( A_{\sigma} \) has a Hopf algebroid structure if \( \sigma \) satisfies a certain condition, called rigidity.

\begin{defi}(See \cite[Definition 4.2]{oshibu}.)
The family \( \sigma = \{ \sigma^{ab}_{cd} \}_{a, b, c, d \in X} \) is called rigid, iff, for any \( a, b \in X \), there exist \( x_{ab}, y_{ab} \in A_{\sigma} \) such that
\begin{align*}
\sum_{c \in X} ( (L^{-1})_{cb} + I_{\sigma} ) x_{ac} &= \sum_{c \in X} x_{cb} ( (L^{-1})_{ac} + I_{\sigma} ) \\
&= \sum_{c \in X} ( L_{cb} + I_{\sigma} ) y_{ac} \\
&= \sum_{c \in X} y_{cb} ( L_{ac} + I_{\sigma} ) \\
&= \delta_{a, b} 1_{A_{\sigma}}.
\end{align*}
\end{defi}

\begin{prop}(See \cite[Proposition 4.1]{oshibu}.)
The following are equivalent:
\begin{enumerate}
\item \( \sigma \) is rigid;
\item There exists a unique \( \mathbb{K} \)-algebra anti-automorphism \( S \colon A_{\sigma} \to A_{\sigma} \) satisfying
\begin{equation}
\begin{cases}
	S(f \otimes g + I_{\sigma}) = g \otimes f + I_{\sigma} \; (f, g \in M_{\Lambda}(R)); \\
	S(L_{ab} + I_{\sigma}) = (L^{-1})_{ab} + I_{\sigma}; \\
	S((L^{-1})_{ab} + I_{\sigma}) = x_{ab} \;\; (a, b \in X).
\end{cases}
\end{equation} 
\end{enumerate}
\end{prop}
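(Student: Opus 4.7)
The implication $(2) \Rightarrow (1)$ is the easy direction. Assuming $S$ exists, set $x_{ab} := S((L^{-1})_{ab} + I_{\sigma})$ and $y_{ab} := S^{-1}(L_{ab} + I_{\sigma})$. Applying $S$ (respectively $S^{-1}$), which reverses products, to the two matrix-inverse relations $\sum_c L_{ac}(L^{-1})_{cb} = \delta_{a,b} 1_{A_{\sigma}}$ and $\sum_c (L^{-1})_{ac} L_{cb} = \delta_{a,b} 1_{A_{\sigma}}$ coming from generators of type (2) of $I_{\sigma}$ immediately yields all four rigidity identities.

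For $(1) \Rightarrow (2)$, the plan is to build $S$ in stages. First, define a $\mathbb{K}$-algebra anti-homomorphism $\overline{S} \colon \mathbb{K}\langle \Lambda X \rangle \to A_{\sigma}$ on generators as prescribed (equivalently, a $\mathbb{K}$-algebra homomorphism into $A_{\sigma}^{op}$, which exists by the universal property of the free algebra on $\Lambda X$). Next, verify $\overline{S}(r) = 0$ in $A_{\sigma}$ for every generator $r$ of $I_{\sigma}$ so that $\overline{S}$ descends to an anti-homomorphism $S \colon A_{\sigma} \to A_{\sigma}$. Types (1) and (5) are immediate from the $\mathbb{K}$-algebra structure of $M_{\Lambda}(R) \otimes M_{\Lambda}(R)^{op}$; type (2) is exactly the first pair of rigidity identities; the two type (3) relations involving $L$ are sent after reversal to the two type (3) relations involving $L^{-1}$ already in $I_{\sigma}$; and the type (4) relation is sent to a dual $\sigma$-relation among $L^{-1}$ factors, derivable from the original (4) relation by sandwiching between $L^{-1}$'s and invoking the matrix-inverse identities of type (2). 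Uniqueness of $S$ follows once $\overline{S}$ descends, since $A_{\sigma}$ is generated as a $\mathbb{K}$-algebra by the images of $\Lambda X$.

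The main technical obstacle is the two type (3) relations involving $L^{-1}$: their $\overline{S}$-images are commutation relations between $x_{ab}$ and elements $1 \otimes f$ or $f \otimes 1$ that are not given a priori. These I would derive from rigidity using uniqueness of matrix inverses in $M_X(A_{\sigma}^{op})$. For instance, to obtain $x_{cb}(1 \otimes f) = (1 \otimes T_{\deg(b)}(f)) x_{cb}$, right-multiply the rigidity identity $\sum_c x_{cb}(L^{-1})_{ac} = \delta_{a,b} 1_{A_{\sigma}}$ by $(1 \otimes T_{\deg(a)}(f))$ and use the (3d) relation $(L^{-1})_{ac}(1 \otimes T_{\deg(a)}(f)) = (1 \otimes f)(L^{-1})_{ac}$ to rewrite it as $\sum_c x_{cb}(1 \otimes f)(L^{-1})_{ac} = \delta_{a,b}(1 \otimes T_{\deg(b)}(f))$; comparing with $(1 \otimes T_{\deg(b)}(f)) \sum_c x_{cb}(L^{-1})_{ac} = \delta_{a,b}(1 \otimes T_{\deg(b)}(f))$ gives $\sum_c [x_{cb}(1 \otimes f) - (1 \otimes T_{\deg(b)}(f)) x_{cb}](L^{-1})_{ac} = 0$. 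Reading this as a vanishing product in $M_X(A_{\sigma}^{op})$ and using that rigidity provides $x$ as the two-sided inverse of $L^{-1}$ there, we cancel $L^{-1}$ to conclude. The partner relation from the other $L^{-1}$-generator of type (3) is handled symmetrically.

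Finally, for bijectivity I would run the parallel construction using $y_{ab}$ to produce an anti-homomorphism $S' \colon A_{\sigma} \to A_{\sigma}$ with $S'(L_{ab} + I_{\sigma}) = y_{ab}$ and $S'((L^{-1})_{ab} + I_{\sigma}) = L_{ab} + I_{\sigma}$, well-defined by the symmetric derivations using the other two rigidity identities. Checking $S \circ S' = S' \circ S = \mathrm{id}$ on the generating set extends to all of $A_{\sigma}$ by the anti-algebra-homomorphism property, yielding $S' = S^{-1}$.
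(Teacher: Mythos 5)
This proposition is quoted from \cite[Proposition 4.1]{oshibu}; the present paper gives no proof of it, so there is nothing in-text to compare against. Your proposal is the standard (and surely the intended) argument, and its key steps check out: $(2)\Rightarrow(1)$ by applying $S$ and $S^{-1}$ to the type-(2) relations is correct, and your derivation of the missing commutation relations for the $x_{ab}$ via two-sided cancellation in $M_X(A_{\sigma}^{op})$ is exactly right, since rigidity does make $x$ a two-sided inverse of $L^{-1}$ there. Two places deserve one more line each. First, for bijectivity, $S'\circ S\bigl((L^{-1})_{ab}+I_{\sigma}\bigr)=S'(x_{ab})$ is not computable on the nose; you should argue that $S'\circ S$ is a unital algebra endomorphism fixing every $L_{ab}+I_{\sigma}$, hence sends the matrix $\bigl((L^{-1})_{ab}\bigr)$ to another two-sided inverse of $(L_{ab})$ in $M_X(A_{\sigma})$ and therefore fixes it (and symmetrically for $S\circ S'$ on $L_{ab}$). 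Second, the image under $\overline{S}$ of a type-(4) generator is not obtained from the original type-(4) relation by the type-(2) inverses alone: the coefficients $\sigma^{xy}_{ac}\otimes 1$ and $1\otimes\sigma^{bd}_{xy}$ are not central, so the sandwiching also uses the type-(3) twisted commutation relations and the invariance condition in \eqref{cond:invc} to keep the $T_{\deg}$-twists consistent; this is routine but should be said, and the analogous verification must be repeated for $S'$ (where the type-(4) image becomes a relation among the $y_{ab}$).
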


\begin{prop}(See \cite[Proposition 4.2]{oshibu}.)
If \( \sigma \) is rigid, then the pair \( (A_{\sigma}, S) \) is a Hopf algebroid for \( N = M_{\Lambda}( R )^{op} \) and \( \omega = {\rm id}_{M_{\Lambda}( R )} \).
\end{prop}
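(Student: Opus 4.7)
The strategy is to verify the defining conditions of a Hopf algebroid for $(A_\sigma, S)$ directly on the algebra generators of $A_\sigma$, extending by the (anti-)multiplicativity of the structure maps. Throughout let $L = M_\Lambda(R)$.

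First I would confirm the prerequisites of the Hopf algebroid definition. The compatibility $S \circ t_L = s_L$ is immediate from $S(1_L \otimes f + I_\sigma) = f \otimes 1_L + I_\sigma$. For the identity $S(a_{[1]}) a_{[2]} = t_L \circ \pi_L \circ S(a)$, case-checking on generators suffices: on $L_{ab}$ the left-hand side becomes $\sum_c (L^{-1})_{ac} L_{cb} = \delta_{a,b} 1_{A_\sigma}$ by relation (2) of $I_\sigma$, and the right-hand side equals $\delta_{a,b} 1_{A_\sigma}$ since $\pi_L((L^{-1})_{ab}) = \delta_{a,b} 1_L$ follows from the explicit description of $\chi$; on $(L^{-1})_{ab}$ the analogous identity uses rigidity via the $x_{ab}$; on $f \otimes g$ it reduces to a direct bimodule computation. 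Bijectivity of $S_{A \otimes_N A}$ is immediate from $S$ being an anti-automorphism, with inverse $a \otimes b \mapsto S^{-1}(b) \otimes S^{-1}(a)$; well-definedness over the tensor relations uses $S^{-1} \circ s_L = t_L$ and the explicit $N$-action coming from $N = M_\Lambda(R)^{op}$, $\omega = \mathrm{id}$.

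Setting $\Delta_N := S_{A \otimes_L A} \circ \Delta_L \circ S^{-1}$, one computes on generators: $\Delta_N(L_{ab}) = \sum_c L_{ac} \otimes L_{cb}$ in $A \otimes_N A$ (using $S^{-1}((L^{-1})_{ab}) = L_{ab}$), $\Delta_N((L^{-1})_{ab}) = \sum_c (L^{-1})_{cb} \otimes (L^{-1})_{ac}$, and $\Delta_N(f \otimes g) = s_L(f) \otimes t_L(g)$. Axioms (ii) and (iii) of the Hopf algebroid definition then reduce on these generators to the familiar triple-sum identities for matrix-like coalgebras (in the mixed tensor products $A \otimes_L A \otimes_N A$ and $A \otimes_N A \otimes_L A$), and on $f \otimes g$ to coassociativity of the base algebra.

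The main obstacle is axiom (i), which after substituting the definition of $\Delta_N$ unfolds to the twisted coalgebra compatibility $\Delta_L \circ S^2 = (S^2 \otimes S^2) \circ \Delta_L$. On $L_{ab}$ this reads $\Delta_L(x_{ab}) = \sum_c x_{ac} \otimes x_{cb}$, which I would derive by applying $\Delta_L$ to the four rigidity equations characterizing $x_{ab}$ and invoking their uniqueness; the case $(L^{-1})_{ab}$ is handled symmetrically using the $y_{ab}$, and $f \otimes g$ is immediate. Throughout, the centrality hypothesis $\sigma^{ab}_{cd}(\lambda) \in Z(R)$ ensures that all coefficient manipulations proceed exactly as in the $R = \mathbb{K}$ case of Shibukawa--Takeuchi, reducing the verification to that classical computation.
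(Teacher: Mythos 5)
The paper itself offers no proof of this proposition: it is imported verbatim from \cite[Proposition 4.2]{oshibu} (as is the preceding proposition constructing $S$), so there is no in-paper argument to measure your proposal against. Judged on its own, your outline has the right overall shape --- it is the standard generator-by-generator verification for FRT-type Hopf algebroids in the style of \cite{shiburi} --- and the individual generator computations you indicate are correct (e.g.\ $\sum_c (L^{-1})_{ac}L_{cb}+I_\sigma=\delta_{a,b}1_{A_\sigma}$ against $t_L\circ\pi_L((L^{-1})_{ab}+I_\sigma)=\delta_{a,b}1_{A_\sigma}$, the latter read off from $\chi$).

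Two steps are asserted more easily than they are earned. First, for the antipode identity \eqref{Sa} it is not automatic that verification on generators propagates to products: the map $a\mapsto S(a_{[1]})a_{[2]}$ is not an algebra homomorphism, and its target $t_L\circ\pi_L\circ S(a)$ is not central, so the usual Hopf-algebra convolution argument does not transfer verbatim. One must prove that the set of elements satisfying \eqref{Sa} is closed under multiplication, which uses \eqref{def:stcom}, \eqref{def:st}, \eqref{def:pmulti} and the $(L,L)$-bilinearity of $\Delta_L$ in an essential way; this should be an explicit lemma rather than being folded into ``extending by (anti-)multiplicativity''. Second, your formula $\Delta_N(L_{ab})=\sum_c L_{ac}\otimes L_{cb}$ requires knowing $S^{-1}(L_{ab})$ and how $\Delta_L$ acts on it, but $S$ is only prescribed on $f\otimes g$, $L_{ab}$ and $(L^{-1})_{ab}$, so $S^{-1}(L_{ab})$ is not among the given data; the identity you invoke, $S^{-1}((L^{-1})_{ab})=L_{ab}$, yields $\Delta_N((L^{-1})_{ab})=\sum_c(L^{-1})_{cb}\otimes(L^{-1})_{ac}$ but says nothing about $\Delta_N(L_{ab})$. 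Consequently the relation $\Delta_L(S^{2}(L_{ab}))=\sum_c S^{2}(L_{ac})\otimes S^{2}(L_{cb})$ (your axiom (i) computation via uniqueness of the generalized inverses $x_{ab}$) has to be established \emph{before} the explicit description of $\Delta_N$ on all generators, and hence before axioms (ii) and (iii); in the order you present it the argument is circular. Both points are repairable, but as written they are genuine gaps.
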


\subsection{Left bialgebroid \( \mathfrak{A}(w) \)} \label{sec:Aw}

In this subsection, we introduce a left bialgebroid \( \mathfrak{A}(w) \). This is a generalization of \cite{hayas}.

\begin{defi}
Let \( \Lambda \) be a non-empty set. A set \(Q\) endowed with two maps \( \mathfrak{s}, \mathfrak{t} \colon Q \to \Lambda \) is said to be a quiver over \( \Lambda \). These maps \( \mathfrak{s} \) and \( \mathfrak{t} \) are respectively called the source map and the target map.

For a non-negative integer \( m \), we define the fiber product \( Q^{(m)} \) by \( Q^{(0)} := \Lambda \), \( Q^{(1)} := Q \), and \( Q^{(m)} := \{ q = (q_1, \ldots, q_m) \in Q^m \; | \; \mathfrak{t}( q_{i} ) = \mathfrak{s}( q_{i + 1} ), 1 \leq \forall i \leq m-1 \} \; (m>1) \). The set \( Q^{(m)} \; (m>0) \) is a quiver over \( \Lambda \) with \( \mathfrak{s}(q) = \mathfrak{s}(q_1) \), \( \mathfrak{t}(q) = \mathfrak{t}(q_m) \). \( Q^{(0)} \) is also a quiver over \( \Lambda \) by \( \mathfrak{s} = \mathfrak{t} = {\rm id}_{\Lambda} \).
\end{defi}

Let \( \Lambda \) be a non-empty finite set, and \( Q \) a finite quiver over \( \Lambda \). We denote by \( \mathfrak{G}(Q) \) the linear span of the symbols \( \displaystyle \mathbf{e}\genfrac{[}{]}{0pt}{}{p}{q} \) \( ( p, q \in Q^{(m)}, m \in \mathbb{Z}_{\geq 0} ) \): 

\begin{equation}
\mathfrak{G}(Q) := \bigoplus_{p, q \in Q^{(m)}, m \in \mathbb{Z}_{\geq 0}} \mathbb{K} \; \mathbf{e}\genfrac{[}{]}{0pt}{}{p}{q}.
\end{equation}
This \( \mathfrak{G}(Q) \) is a \( \mathbb{K} \)-algebra by the following multiplication:

\begin{align*}
\mathbf{e}\genfrac{[}{]}{0pt}{}{p}{q} \mathbf{e}\genfrac{[}{]}{0pt}{}{p^{\prime}}{q^{\prime}} &= \delta_{\mathfrak{t}( p ), \mathfrak{s}( p^{\prime} )} \delta_{\mathfrak{t}( q ), \mathfrak{s}( q^{\prime} )}
\mathbf{e}\genfrac{[}{]}{0pt}{}{p p^{\prime}}{q q^{\prime}}; \\
1_{\mathfrak{G}(Q)} &= \sum_{\lambda, \mu \in \Lambda} \mathbf{e}\genfrac{[}{]}{0pt}{}{\lambda}{\mu}
\end{align*}
for \( p, q \in Q^{(m)}, p^{\prime}, q^{\prime} \in Q^{(n)} \), and \( m, n \in \mathbb{Z}_{\geq 0} \). Here \( \delta_{\lambda, \mu} \in \mathbb{K} \; (\lambda, \mu \in \Lambda) \) means Kronecker's delta symbol. For a \( \mathbb{K} \)-algebra \(R\), let \( \mathbf{w}\begin{sumibmatrix} & a & \\ c & & b \\ & d & \end{sumibmatrix} \in R \; ((a, b), (c, d) \in Q^{(2)}) \).
We write \( \mathfrak{I}_{\mathbf{w}} \) for the two-sided ideal of the \( \mathbb{K} \)-algebra \( \mathfrak{H}(Q) := R \otimes_{\mathbb{K}} R^{op} \otimes_{\mathbb{K}} \mathfrak{G}(Q) \) whose generators are
\begin{align}
&\sum_{( x, y ) \in Q^{(2)}} \mathbf{w}\begin{sumibmatrix} & x & \\ a & & y \\ & b & \end{sumibmatrix} \otimes 1_R \otimes \mathbf{e}\genfrac{[}{]}{0pt}{}{x}{c} \mathbf{e}\genfrac{[}{]}{0pt}{}{y}{d} \nonumber \\
&- \sum_{( x, y ) \in Q^{(2)}} 1_R \otimes \mathbf{w}\begin{sumibmatrix} & c & \\ x & & d \\ & y & \end{sumibmatrix} \otimes \mathbf{e}\genfrac{[}{]}{0pt}{}{a}{x} \mathbf{e}\genfrac{[}{]}{0pt}{}{b}{y} \;\; ( \forall (a, b), (c, d) \in Q^{(2)} ). \label{gen:face}
\end{align}
We define \( \mathfrak{A}(w) \) by the quotient \( \mathfrak{A}(w) := \mathfrak{H}(Q) / \mathfrak{I}_{\mathbf{w}} \).

\begin{theo}\label{theo:lb}
If the following conditions are satisfied, then \( \mathfrak{A}(w) \) is a left bialgebroid.
\begin{equation}
\begin{aligned}
\begin{cases}
\mathbf{w}\begin{sumibmatrix} & a & \\ c & & b \\ & d & \end{sumibmatrix} \in Z(R) \;\; ( \forall (a, b) , (c, d) \in Q^{(2)} ) ; \\
\mathfrak{s}( a ) \neq \mathfrak{s}( c ) \; {\rm or} \; \mathfrak{t}( b ) \neq \mathfrak{t}( d ) \Rightarrow \mathbf{w}\begin{sumibmatrix} & a & \\ c & & b \\ & d & \end{sumibmatrix} = 0.
\end{cases}
\end{aligned}
\label{face}
\end{equation}
\end{theo}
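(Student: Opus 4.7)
The plan is to imitate the construction of $A_\sigma$ in Section~\ref{sec:As} and to take the base algebra to be $L := M_\Lambda(R)$. In each step I would first construct a $\mathbb{K}$-algebra homomorphism out of the free algebra $\mathfrak{H}(Q)$, verify it kills (or lands in the appropriate ideal of) $\mathfrak{I}_{\mathbf{w}}$, descend to $\mathfrak{A}(w)$, and then check the axioms \eqref{def:stcom}--\eqref{def:pmulti} on algebra generators. The two conditions in~\eqref{face} play the same roles as those in~\eqref{cond:invc} did for $A_\sigma$: centrality of $\mathbf{w}\begin{sumibmatrix} & a & \\ c & & b \\ & d & \end{sumibmatrix}$ lets these weights slide across the $R \otimes R^{op}$ factors of $\mathfrak{H}(Q)$, while the source/target support condition forces the counit to be well defined on the face generators.

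First I would define
\begin{align*}
s_L(f) &= \sum_{\lambda, \mu \in \Lambda} f(\lambda) \otimes 1_R \otimes \mathbf{e}\genfrac{[}{]}{0pt}{}{\lambda}{\mu} + \mathfrak{I}_{\mathbf{w}}, \\
t_L(f) &= \sum_{\lambda, \mu \in \Lambda} 1_R \otimes f(\mu) \otimes \mathbf{e}\genfrac{[}{]}{0pt}{}{\lambda}{\mu} + \mathfrak{I}_{\mathbf{w}},
\end{align*}
and check these are $\mathbb{K}$-algebra homomorphisms with commuting images, using only the matrix-unit multiplication of the $\mathbf{e}\genfrac{[}{]}{0pt}{}{\lambda}{\mu}$ at level $m=0$ together with the commutation of the $R$- and $R^{op}$-tensor factors inside $\mathfrak{H}(Q)$. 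For the comultiplication, I would define a $\mathbb{K}$-algebra homomorphism $\overline{\Delta} \colon \mathfrak{H}(Q) \to \mathfrak{A}(w) \otimes_\mathbb{K} \mathfrak{A}(w)$ on algebra generators by
\begin{align*}
\overline{\Delta}(r \otimes r' \otimes 1_{\mathfrak{G}(Q)}) &= (r \otimes 1_R \otimes 1_{\mathfrak{G}(Q)} + \mathfrak{I}_{\mathbf{w}}) \otimes (1_R \otimes r' \otimes 1_{\mathfrak{G}(Q)} + \mathfrak{I}_{\mathbf{w}}), \\
\overline{\Delta}\!\left(\mathbf{e}\genfrac{[}{]}{0pt}{}{p}{q}\right) &= \sum_{r \in Q^{(m)}} \left(\mathbf{e}\genfrac{[}{]}{0pt}{}{p}{r} + \mathfrak{I}_{\mathbf{w}}\right) \otimes \left(\mathbf{e}\genfrac{[}{]}{0pt}{}{r}{q} + \mathfrak{I}_{\mathbf{w}}\right)
\end{align*}
for $p, q \in Q^{(m)}$. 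Following the recipe of Section~\ref{sec:As}, let $I_2$ be the right ideal of $\mathfrak{A}(w) \otimes_\mathbb{K} \mathfrak{A}(w)$ generated by the elements $t_L(f) \otimes 1 - 1 \otimes s_L(f)$, so that $(\mathfrak{A}(w) \otimes_\mathbb{K} \mathfrak{A}(w))/I_2 \cong \mathfrak{A}(w) \otimes_L \mathfrak{A}(w)$. I would then show $\overline{\Delta}(\mathfrak{I}_{\mathbf{w}}) \subseteq I_2$ by applying $\overline{\Delta}$ to the generator~\eqref{gen:face}, reindexing the resulting double sum over intermediate paths, and using~\eqref{gen:face} once more inside a single tensor factor with the summation variable in the middle position; centrality~\eqref{face} is what lets the weights be pulled cleanly across tensor factors, and the residual discrepancy lies in $I_2$. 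Then $\overline{\Delta}$ descends to $\Delta_L$, which is automatically an $(L,L)$-bimodule map.

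For the counit, I would construct $\overline{\chi} \colon \mathfrak{H}(Q) \to \mathrm{End}_\mathbb{K}(M_\Lambda(R))$ by sending $r \otimes r' \otimes 1_{\mathfrak{G}(Q)}$ to the operator of left-multiplication by the constant function of value $r$ composed with right-multiplication by the constant function of value $r'$, and $\mathbf{e}\genfrac{[}{]}{0pt}{}{p}{q}$ to $\delta_{p, q}\, \rho_l(\chi_{\mathfrak{s}(p)})$, where $\chi_\lambda \in M_\Lambda(\mathbb{K}) \subset M_\Lambda(R)$ is the indicator of $\{\lambda\}$. The support half of~\eqref{face} is precisely what forces $\overline{\chi}$ to annihilate the face generator~\eqref{gen:face}, so it descends to $\chi \colon \mathfrak{A}(w) \to \mathrm{End}_\mathbb{K}(M_\Lambda(R))$ and one sets $\pi_L(a) := \chi(a)(1_{M_\Lambda(R)})$. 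The remaining axioms are then routine checks on generators: coassociativity follows from associativity of path concatenation, counitality from the matrix-unit collapse, multiplicativity of $\Delta_L$ and $\pi_L$ is built in since $\overline{\Delta}$ and $\overline{\chi}$ were defined as algebra homomorphisms, and~\eqref{def:st} together with~\eqref{def:pmulti} follow by direct comparison in each tensor slot. The main obstacle is precisely the inclusion $\overline{\Delta}(\mathfrak{I}_{\mathbf{w}}) \subseteq I_2$: this is where the quiver-theoretical QYBE-style recombination of the two double sums in~\eqref{gen:face} must be executed, and it is exactly the place where passing from Hayashi's ground-field setting to an arbitrary $\mathbb{K}$-algebra $R$ forces one to invoke centrality of the face weights.
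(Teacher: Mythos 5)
Your construction of \( s_{M_\Lambda(R)} \), \( t_{M_\Lambda(R)} \) and of the comultiplication is the paper's: your \( \overline{\Delta} \) is the map \( \overline{\nabla} \) of Subsection \ref{sec:Aw}, and your description of the key inclusion \( \overline{\Delta}( \mathfrak{I}_{\mathbf{w}} ) \subseteq \mathfrak{I}_2 \) (apply the relation \eqref{gen:face} once more inside a single tensor factor over the intermediate paths, then absorb the discrepancy into the right ideal \( \mathfrak{I}_2 \)) is exactly how the paper proves it. One misattribution: centrality of the face weights is not what makes that inclusion work --- the discrepancy there is \( \bigl( t_{M_\Lambda(R)}( \mathbf{w}_M ) \otimes 1 - 1 \otimes s_{M_\Lambda(R)}( \mathbf{w}_M ) \bigr) \) right-multiplied by something, and \( \mathfrak{I}_2 \) is a right ideal, so no commutation is needed. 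Both halves of \eqref{face} are in fact consumed by the counit computation, centrality being what lets one undo the reversed multiplication coming from the \( R^{op} \) tensor slot.

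The genuine gap is in your counit. The assignment \( \mathbf{e}\genfrac{[}{]}{0pt}{}{p}{q} \mapsto \delta_{p, q}\, \rho_l( \chi_{\mathfrak{s}(p)} ) \) (your \( \chi_\lambda \) is the paper's \( \delta_\lambda \)) does \emph{not} extend to a \( \mathbb{K} \)-algebra homomorphism \( \mathfrak{H}(Q) \to \mathrm{End}_{\mathbb{K}}( M_\Lambda(R) ) \): take an edge \( p \) with \( \mathfrak{s}(p) = \lambda \neq \mu = \mathfrak{t}(p) \) and an edge \( p^{\prime} \) with \( \mathfrak{s}(p^{\prime}) = \mu \); then \( \mathbf{e}\genfrac{[}{]}{0pt}{}{p}{p} \mathbf{e}\genfrac{[}{]}{0pt}{}{p^{\prime}}{p^{\prime}} = \mathbf{e}\genfrac{[}{]}{0pt}{}{(p, p^{\prime})}{(p, p^{\prime})} \) is sent to \( \rho_l( \chi_\lambda ) \neq 0 \), whereas the product of the images is \( \rho_l( \chi_\lambda ) \rho_l( \chi_\mu ) = \rho_l( \chi_\lambda \chi_\mu ) = 0 \). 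Your strategy for the counit --- descend through the two-sided ideal \( \mathfrak{I}_{\mathbf{w}} \) by checking only the generators \eqref{gen:face}, and obtain \eqref{def:pmulti} from multiplicativity --- relies entirely on this homomorphism property, so both steps collapse. The correct operator, as in the paper, is the path transport \( f \mapsto \delta_{p, q} ( r f( \mathfrak{t}(q) ) r^{\prime} )_M \delta_{\mathfrak{s}(q)} \): it reads the input function at the \emph{target} of the path and outputs a function supported at the \emph{source}, which is precisely what makes composition along concatenated paths multiplicative. (The induced \( \pi_{M_\Lambda(R)}(a) = \zeta(a)( 1_{M_\Lambda(R)} ) \) happens to agree with yours on basis elements because \( 1_{M_\Lambda(R)} \) is constant, but the auxiliary representation must be the transport one for your argument to go through.)
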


The maps \( s_{M_{\Lambda}(R)} \colon M_{\Lambda}(R) \to \mathfrak{A}(w) \) and \( t_{M_{\Lambda}(R)} \colon M_{\Lambda}(R)^{op} \to \mathfrak{A}(w) \) are defined by
\begin{align*}
&s_{M_{\Lambda}(R)}(f) = \sum_{\lambda, \mu \in \Lambda} f( \lambda ) \otimes 1_R \otimes \mathbf{e}\genfrac{[}{]}{0pt}{}{\lambda}{\mu} + \mathfrak{I}_{\mathbf{w}} ; \\
&t_{M_{\Lambda}(R)}(f) = \sum_{\lambda, \mu \in \Lambda} 1_R \otimes f( \lambda ) \otimes \mathbf{e}\genfrac{[}{]}{0pt}{}{\mu}{\lambda} + \mathfrak{I}_{\mathbf{w}} \;\; (f \in M_{\Lambda}(R)).
\end{align*}
These maps are \( \mathbb{K} \)-algebra homomorphisms satisfying \eqref{def:stcom}. As a result, \( \mathfrak{A}(w) \) is an \( (M_{\Lambda}(R), M_{\Lambda}(R)) \)-bimodule by the action \eqref{def:ac}.

Let \( \mathfrak{I}_2 \) denote the right ideal of \( \mathfrak{A}(w) \otimes_{\mathbb{K}} \mathfrak{A}(w) \) whose generators are \( t_{M_{\Lambda}(R)}( f ) \otimes 1_{\mathfrak{A}(w)} - 1_{\mathfrak{A}(w)} \otimes s_{M_{\Lambda}(R)}( f ) \; (\forall f \in M_{\Lambda}(R)) \).
In order to construct the map \( \Delta_{M_{\Lambda}(R)} \), we define the \( \mathbb{K} \)-linear map \( \overline{\nabla} \colon \mathfrak{H}(Q) \to \mathfrak{A}(w) \otimes_{\mathbb{K}} \mathfrak{A}(w) \) by
\begin{align*}
\overline{\nabla}( r \otimes r^{\prime} \otimes \mathbf{e}\genfrac{[}{]}{0pt}{}{p}{q} ) = \sum_{u \in Q^{(m)}} ( r \otimes 1_R \otimes \mathbf{e}\genfrac{[}{]}{0pt}{}{p}{u} + \mathfrak{I}_{\mathbf{w}} ) \otimes ( 1_R \otimes r^{\prime} \otimes \mathbf{e}\genfrac{[}{]}{0pt}{}{u}{q} + \mathfrak{I}_{\mathbf{w}} )
\end{align*}
for \( r, r^{\prime} \in R \), \( p, q \in Q^{(m)} \), and \( m \in \mathbb{Z}_{\geq 0} \). This map \( \overline{\nabla} \) preserves the multiplication of the \( \mathbb{K} \)-algebra \( \mathfrak{H}(Q) \).

\begin{prop}
\( \overline{\nabla}( \mathfrak{I}_{\mathbf{w}} ) \subset \mathfrak{I}_2 \).
\end{prop}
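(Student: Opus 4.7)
The plan is to verify the inclusion first on each generator of $\mathfrak{I}_{\mathbf{w}}$, and then extend to the whole two-sided ideal by checking that $\overline{\nabla}(\mathfrak{H}(Q))$ normalises $\mathfrak{I}_2$ from the left; right multiplication is automatic because $\mathfrak{I}_2$ is a right ideal.

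Fix a generator $g$ corresponding to $(a,b),(c,d) \in Q^{(2)}$ and apply $\overline{\nabla}$ term by term to obtain $\overline{\nabla}(g) = X_1 - X_2$, two double sums over $(x,y) \in Q^{(2)}$ and an intermediate pair $(u_1,u_2) \in Q^{(2)}$. For each fixed $(u_1,u_2)$, the inner sum over $(x,y)$ in the left tensorand of $X_1$ is the positive part of the face generator with parameters $(a,b;u_1,u_2)$, which in $\mathfrak{A}(w)$ coincides with its negative part; this substitution puts the scalar $\mathbf{w}\begin{sumibmatrix} & u_1 & \\ x & & u_2 \\ & y & \end{sumibmatrix}$ into the $R^{op}$-slot of the left factor attached to $\mathbf{e}\genfrac{[}{]}{0pt}{}{ab}{xy}$. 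The analogous substitution in the right tensorand of $X_2$, using the face relation for $(u_1,u_2;c,d)$, produces an expression carrying $\mathbf{w}\begin{sumibmatrix} & x & \\ u_1 & & y \\ & u_2 & \end{sumibmatrix}$ in the $R$-slot of the right factor attached to $\mathbf{e}\genfrac{[}{]}{0pt}{}{u_1 u_2}{cd}$. After swapping the dummy indices $(x,y) \leftrightarrow (u_1,u_2)$ in the rewritten $X_2$, both rewritten sums carry the same scalar $\mathbf{w}\begin{sumibmatrix} & u_1 & \\ x & & u_2 \\ & y & \end{sumibmatrix}$ against the same path data, only placed in opposite tensor factors.

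To eliminate this remaining discrepancy, pick $f \in M_{\Lambda}(R)$ supported at the single vertex $\mathfrak{s}(x)$ with value equal to this scalar. Condition \eqref{face} forces $\mathfrak{s}(u_1) = \mathfrak{s}(x)$ whenever the scalar is non-zero, so the same $f$ realises the required value both at $\mathfrak{s}(x)$ (needed through $t_{M_{\Lambda}(R)}$ on the left factor) and at $\mathfrak{s}(u_1)$ (needed through $s_{M_{\Lambda}(R)}$ on the right factor). A direct check then identifies the per-term discrepancy with $(t_{M_{\Lambda}(R)}(f) \otimes 1 - 1 \otimes s_{M_{\Lambda}(R)}(f)) \cdot (\mathbf{e}\genfrac{[}{]}{0pt}{}{ab}{xy} \otimes \mathbf{e}\genfrac{[}{]}{0pt}{}{u_1 u_2}{cd})$, and summing over all $(x,y,u_1,u_2)$ yields $\overline{\nabla}(g) \in \mathfrak{I}_2$.

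For the extension to the full two-sided ideal, multiplicativity of $\overline{\nabla}$ reduces the task to checking $\overline{\nabla}(\mathfrak{H}(Q)) \cdot \mathfrak{I}_2 \subset \mathfrak{I}_2$. By linearity this reduces in turn to $h = r \otimes r' \otimes \mathbf{e}\genfrac{[}{]}{0pt}{}{p}{q}$ paired with a defining generator of $\mathfrak{I}_2$; the product formulas $(r \otimes 1 \otimes \mathbf{e}\genfrac{[}{]}{0pt}{}{p}{u}) \, t_{M_{\Lambda}(R)}(f) = r \otimes f(\mathfrak{t}(u)) \otimes \mathbf{e}\genfrac{[}{]}{0pt}{}{p}{u}$ and $(1 \otimes r' \otimes \mathbf{e}\genfrac{[}{]}{0pt}{}{u}{q}) \, s_{M_{\Lambda}(R)}(f) = f(\mathfrak{t}(u)) \otimes r' \otimes \mathbf{e}\genfrac{[}{]}{0pt}{}{u}{q}$ expand the product as a sum over the intermediate index $u$ of $\mathfrak{I}_2$-generators built from auxiliary $g_u \in M_{\Lambda}(R)$ chosen so that $g_u(\mathfrak{s}(u)) = f(\mathfrak{t}(u))$. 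The main obstacle is the bookkeeping in the generator step: one must apply two different face substitutions, rename dummy indices to align the two resulting expressions, and exploit \eqref{face} as the precise condition allowing a single function in $M_{\Lambda}(R)$ to realise the required scalar at the two vertices where it is needed.
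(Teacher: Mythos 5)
Your proof is correct and follows essentially the same route as the paper: reduce to the generators via multiplicativity of \( \overline{\nabla} \) together with the fact that \( \mathfrak{I}_2 \) absorbs left multiplication by the image of \( \overline{\nabla} \), then apply the two face relations, relabel the dummy indices, and recognise the remaining discrepancy as a right multiple of the defining generators of \( \mathfrak{I}_2 \). The only (immaterial) difference is that the paper realises the scalar \( \mathbf{w}\begin{sumibmatrix} & u_1 & \\ x & & u_2 \\ & y & \end{sumibmatrix} \) through the constant function \( r_M \), which renders the appeal to \eqref{face} at that step unnecessary, whereas you use a delta function supported at \( \mathfrak{s}(x) \) and invoke \eqref{face} to identify \( \mathfrak{s}(u_1) \) with \( \mathfrak{s}(x) \).
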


\begin{proof}
It is easy to check that \( \overline{\nabla}( \alpha ) \beta \in \mathfrak{I}_2 \) for any \( \alpha \in \mathfrak{H}(Q) \) and \( \beta \in \mathfrak{I}_2 \). In order to complete the proof, we need to show that \( \overline{\nabla}( \gamma ) \in \mathfrak{I}_2 \) for an arbitrary generator \( \gamma \) in \eqref{gen:face}.

For any \( (a,b), (c,d) \in Q^{(2)} \), we can induce the following equality by using the definition of \( \mathfrak{I}_{\mathbf{w}} \):
\begin{align*}
&\overline{\nabla}( \sum_{( x, y ) \in Q^{(2)}} \mathbf{w}\begin{sumibmatrix} & x & \\ a & & y \\ & b & \end{sumibmatrix} \otimes 1_R \otimes \mathbf{e}\genfrac{[}{]}{0pt}{}{x}{c} \mathbf{e}\genfrac{[}{]}{0pt}{}{y}{d}
- \sum_{( x, y ) \in Q^{(2)}} 1_R \otimes \mathbf{w}\begin{sumibmatrix} & c & \\ x & & d \\ & y & \end{sumibmatrix} \otimes \mathbf{e}\genfrac{[}{]}{0pt}{}{a}{x} \mathbf{e}\genfrac{[}{]}{0pt}{}{b}{y} ) \\
=& \sum_{(x, y), (u, v) \in Q^{(m)}} ( \mathbf{w}\begin{sumibmatrix} & x & \\ a & & y \\ & b & \end{sumibmatrix} \otimes 1_R \otimes \mathbf{e}\genfrac{[}{]}{0pt}{}{x}{u} \mathbf{e}\genfrac{[}{]}{0pt}{}{y}{v} + \mathfrak{I}_{\mathbf{w}} ) \otimes ( 1_R \otimes 1_R \otimes \mathbf{e}\genfrac{[}{]}{0pt}{}{u}{c} \mathbf{e}\genfrac{[}{]}{0pt}{}{v}{d} + \mathfrak{I}_{\mathbf{w}} ) \\
-& \sum_{(x, y), (u, v) \in Q^{(m)}} ( 1_R \otimes 1_R \otimes \mathbf{e}\genfrac{[}{]}{0pt}{}{a}{u} \mathbf{e}\genfrac{[}{]}{0pt}{}{b}{v} + \mathfrak{I}_{\mathbf{w}} ) \otimes ( 1_R \otimes \mathbf{w}\begin{sumibmatrix} & c & \\ x & & d \\ & y & \end{sumibmatrix} \otimes \mathbf{e}\genfrac{[}{]}{0pt}{}{u}{x} \mathbf{e}\genfrac{[}{]}{0pt}{}{v}{y} + \mathfrak{I}_{\mathbf{w}} ) \\
=& \sum_{(x, y), (u, v) \in Q^{(m)}} ( 1_R \otimes \mathbf{w}\begin{sumibmatrix} & u & \\ x & & v \\ & y & \end{sumibmatrix} \otimes \mathbf{e}\genfrac{[}{]}{0pt}{}{a}{x} \mathbf{e}\genfrac{[}{]}{0pt}{}{b}{y} + \mathfrak{I}_{\mathbf{w}} ) \otimes ( 1_R \otimes 1_R \otimes \mathbf{e}\genfrac{[}{]}{0pt}{}{u}{c} \mathbf{e}\genfrac{[}{]}{0pt}{}{v}{d} + \mathfrak{I}_{\mathbf{w}} ) \\
-& \sum_{(x, y), (u, v) \in Q^{(m)}} ( 1_R \otimes 1_R \otimes \mathbf{e}\genfrac{[}{]}{0pt}{}{a}{x} \mathbf{e}\genfrac{[}{]}{0pt}{}{b}{y} + \mathfrak{I}_{\mathbf{w}} ) \otimes ( \mathbf{w}\begin{sumibmatrix} & u & \\ x & & v \\ & y & \end{sumibmatrix} \otimes 1_R \otimes \mathbf{e}\genfrac{[}{]}{0pt}{}{u}{c} \mathbf{e}\genfrac{[}{]}{0pt}{}{v}{d} + \mathfrak{I}_{\mathbf{w}} ) \\
=& \sum_{(x, y), (u, v) \in Q^{(m)}} ( t_{M_{\Lambda}(R)}( \mathbf{w} \begin{sumibmatrix} & u & \\ x & & v \\ & y & \end{sumibmatrix}_M ) \otimes 1_{\mathfrak{A}(w)} - 1_{\mathfrak{A}(w)} \otimes s_{M_{\Lambda}(R)}( \mathbf{w}\begin{sumibmatrix} & u & \\ x & & v \\ & y & \end{sumibmatrix}_M ) ) \\
&\times ( ( 1_R \otimes 1_R \otimes \mathbf{e}\genfrac{[}{]}{0pt}{}{a}{x} \mathbf{e}\genfrac{[}{]}{0pt}{}{b}{y} + \mathfrak{I}_{\mathbf{w}} ) \otimes ( 1_R \otimes 1_R \otimes \mathbf{e}\genfrac{[}{]}{0pt}{}{u}{c} \mathbf{e}\genfrac{[}{]}{0pt}{}{v}{d} + \mathfrak{I}_{\mathbf{w}} ) ) \\
&\in \mathfrak{I}_2.
\end{align*}
Here \( r_M \in M_{\Lambda}(R) \; (r \in R) \) is the map defined by \( r_M( \lambda ) = r \; (\lambda \in \Lambda) \). Thus this proposition is proved.
\end{proof}

This proposition induces a \( \mathbb{K} \)-linear map \( \tilde{\nabla}( \alpha + \mathfrak{I}_{\mathbf{w}} ) = \overline{\nabla}( \alpha ) + \mathfrak{I}_2 \; (\alpha \in \mathfrak{H}(Q)) \). Since \( \mathfrak{A}(w) \otimes_{\mathbb{K}} \mathfrak{A}(w) / \mathfrak{I}_2 \cong  \mathfrak{A}(w) \otimes_{M_{\Lambda}(R)} \mathfrak{A}(w) \) as \( \mathbb{K} \)-vector spaces, we can construct the \( \mathbb{K} \)-linear map \( \Delta_{M_{\Lambda}(R)} \colon \mathfrak{A}(w) \to \mathfrak{A}(w) \otimes_{M_{\Lambda}(R)} \mathfrak{A}(w) \) from the map \( \tilde{\nabla} \). This \( \Delta_{M_{\Lambda}(R)} \) is an \( (M_{\Lambda}(R), M_{\Lambda}(R)) \)-bimodule homomorphism.

The next task is to construct the map \( \pi_{M_{\Lambda}(R)} \colon \mathfrak{A}(w) \to M_{\Lambda}(R) \). We first define the \( \mathbb{K} \)-linear map \( \overline{\zeta} \colon \mathfrak{H}(Q) \to {\rm End}_{\mathbb{K}}( M_{\Lambda}(R) ) \) as follows:
\begin{align*}
\overline{\zeta}( r \otimes r^{\prime} \otimes {\bf e}\genfrac{[}{]}{0pt}{}{p}{q} ) ( f ) = \delta_{p, q} (r f( \mathfrak{t}( q ) ) r^{\prime})_M \delta_{\mathfrak{s}( q )} \;\; (f \in M_{\Lambda}(R)).
\end{align*}
Here \( \delta_{\lambda} \in M_{\Lambda}(R) \; (\lambda \in \Lambda) \) is the map defined by \(\delta_{\lambda}( \mu ) = \delta_{\lambda, \mu} \; (\mu \in \Lambda) \). This map \( \overline{\zeta} \) is a \( \mathbb{K} \)-algebra homomorphism.

\begin{prop}
\( \overline{\zeta}( \mathfrak{I}_{\mathbf{w}} ) = \{ 0 \} \).
\end{prop}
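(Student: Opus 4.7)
The plan is to reduce the statement to a pointwise computation on generators. Since $\overline{\zeta}$ is a $\mathbb{K}$-algebra homomorphism and $\mathfrak{I}_{\mathbf{w}}$ is a two-sided ideal generated by the elements $\gamma_{a,b,c,d}$ appearing in \eqref{gen:face}, it is enough to show $\overline{\zeta}(\gamma_{a,b,c,d}) = 0$ for each fixed $(a,b), (c,d) \in Q^{(2)}$; then $\overline{\zeta}(\alpha\gamma_{a,b,c,d}\beta) = \overline{\zeta}(\alpha)\,\overline{\zeta}(\gamma_{a,b,c,d})\,\overline{\zeta}(\beta) = 0$ for all $\alpha, \beta \in \mathfrak{H}(Q)$, and $\overline{\zeta}$ annihilates every element of $\mathfrak{I}_{\mathbf{w}}$.

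So I would fix $(a,b), (c,d) \in Q^{(2)}$, apply $\overline{\zeta}(\gamma_{a,b,c,d})$ to an arbitrary $f \in M_{\Lambda}(R)$, and simplify. In the multiplication $\mathbf{e}\genfrac{[}{]}{0pt}{}{x}{c}\mathbf{e}\genfrac{[}{]}{0pt}{}{y}{d}$ the concatenation conditions $\mathfrak{t}(x) = \mathfrak{s}(y)$ and $\mathfrak{t}(c) = \mathfrak{s}(d)$ are automatic because $(x,y), (c,d) \in Q^{(2)}$, so this product collapses to $\mathbf{e}\genfrac{[}{]}{0pt}{}{(x,y)}{(c,d)}$. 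The factor $\delta_{p,q}$ in the definition of $\overline{\zeta}$ then forces $(x,y) = (c,d)$ in the first sum and $(x,y) = (a,b)$ in the second. After this collapse the two sums become, respectively,
\[
\Bigl(\mathbf{w}\begin{sumibmatrix} & c & \\ a & & d \\ & b & \end{sumibmatrix} f(\mathfrak{t}(d))\Bigr)_{M} \delta_{\mathfrak{s}(c)} \quad \text{and} \quad \Bigl(f(\mathfrak{t}(b)) \mathbf{w}\begin{sumibmatrix} & c & \\ a & & d \\ & b & \end{sumibmatrix}\Bigr)_{M} \delta_{\mathfrak{s}(a)},
\]
where $(\cdot)_{M}$ denotes the corresponding constant element of $M_{\Lambda}(R)$.

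The final step is to show these two expressions coincide, which is precisely where both halves of hypothesis \eqref{face} enter. If $\mathbf{w}\begin{sumibmatrix} & c & \\ a & & d \\ & b & \end{sumibmatrix} = 0$ there is nothing to prove. Otherwise the support condition in \eqref{face} gives $\mathfrak{s}(a) = \mathfrak{s}(c)$ and $\mathfrak{t}(b) = \mathfrak{t}(d)$, so $\delta_{\mathfrak{s}(a)} = \delta_{\mathfrak{s}(c)}$ and $f(\mathfrak{t}(b)) = f(\mathfrak{t}(d))$, while the centrality condition $\mathbf{w}\begin{sumibmatrix} & c & \\ a & & d \\ & b & \end{sumibmatrix} \in Z(R)$ lets us commute this coefficient past $f(\mathfrak{t}(d))$ inside $R$. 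This yields the desired equality and completes the proof.

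I expect the only real obstacle to be bookkeeping: keeping straight which source/target maps refer to $Q$ and which to the concatenated paths $Q^{(2)}$, and ensuring that the Kronecker $\delta_{p,q}$ in the definition of $\overline{\zeta}$ is applied to paths of the correct length. Once the collapse of the sums is done correctly, centrality plus the support condition do all the work.
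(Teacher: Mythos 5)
Your proposal is correct and follows essentially the same route as the paper: reduce to the generators \eqref{gen:face} via the algebra-homomorphism property of \( \overline{\zeta} \), collapse each sum using the Kronecker delta \( \delta_{p,q} \), and conclude by combining the centrality and support conditions of \eqref{face}. If anything, your write-up is slightly more explicit than the paper's in noting that the support condition also forces \( \mathfrak{t}(b) = \mathfrak{t}(d) \), which is needed to identify \( f(\mathfrak{t}(b)) \) with \( f(\mathfrak{t}(d)) \).
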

\begin{proof}
We denote by \( f \) an arbitrary element in \( M_{\Lambda}(R) \). By using the first condition in \eqref{face},
\begin{align*}
&\overline{\zeta}( \sum_{(x, y ) \in Q^{(2)}} {\bf w}\begin{sumibmatrix} & x & \\ a & & y \\ & b & \end{sumibmatrix} \otimes 1_R \otimes {\bf e}\genfrac{[}{]}{0pt}{}{x}{c} {\bf e}\genfrac{[}{]}{0pt}{}{y}{d} \\
&- \sum_{( x, y ) \in Q^{(2)}} 1_R \otimes {\bf w}\begin{sumibmatrix} & c & \\ x & & d \\ & y & \end{sumibmatrix} \otimes {\bf e}\genfrac{[}{]}{0pt}{}{a}{x} {\bf e}\genfrac{[}{]}{0pt}{}{b}{y} ) ( f ) \\
=& ({\bf w}\begin{sumibmatrix} & c & \\ a & & d \\ & b & \end{sumibmatrix} f( \mathfrak{t}( d ) ))_M \delta_{\mathfrak{s}(c)} - ( f( \mathfrak{t}( b ) {\bf w}\begin{sumibmatrix} & c & \\ a & & d \\ & b & \end{sumibmatrix} )_M \delta_{\mathfrak{s}(a)} \\
=& ({\bf w}\begin{sumibmatrix} & c & \\ a & & d \\ & b & \end{sumibmatrix} f( \mathfrak{t}( d ) ))_M \delta_{\mathfrak{s}(c)} - ({\bf w}\begin{sumibmatrix} & c & \\ a & & d \\ & b & \end{sumibmatrix} f( \mathfrak{t}( b ) ))_M \delta_{\mathfrak{s}(a)}
\end{align*}
for all \( (a, b) \) and \( (c, d) \in Q^{(2)} \). If \( {\bf w}\begin{sumibmatrix} & c & \\ a & & d \\ & b & \end{sumibmatrix} \neq 0 \), \( \mathfrak{s}(a) = \mathfrak{s}(c) \) is satisfied because of the second condition in \eqref{face}. This completes the proof.
\end{proof}

As a result of this proposition, the map \( \zeta(\alpha + \mathfrak{I}_{\mathbf{w}}) = \overline{\zeta}(\alpha) \; (\alpha \in \mathfrak{H}(Q)) \) is an well defined \( \mathbb{K} \)-algebra homomorphism. We define the map \( \pi_{M_{\Lambda}(R)} \) by
\begin{equation}
\pi_{M_{\Lambda}(R)} \colon \mathfrak{A}(w) \ni a \mapsto \zeta(a)( 1_{M_{\Lambda}(R)} ) \in M_{\Lambda}(R).
\end{equation}
This \( \pi_{M_{\Lambda}(R)} \) is an \( (M_{\Lambda}(R), M_{\Lambda}(R)) \)-bimodule homomorphism.

\begin{prop}
The triplet \( (\mathfrak{A}(w), \Delta_{M_{\Lambda}(R)}, \pi_{M_{\Lambda}(R)}) \) is a comonoid in the tensor category of \( (M_{\Lambda}(R), M_{\Lambda}(R)) \)-bimodules.
\end{prop}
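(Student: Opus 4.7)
The plan is to verify the coassociativity axiom and the two counit axioms on the $\mathbb{K}$-linear spanning set of generators $r \otimes r' \otimes \mathbf{e}\genfrac{[}{]}{0pt}{}{p}{q} + \mathfrak{I}_{\mathbf{w}}$ with $r, r' \in R$ and $p, q \in Q^{(m)}$. Since $\Delta_{M_{\Lambda}(R)}$ and $\pi_{M_{\Lambda}(R)}$ were induced from the $\mathbb{K}$-linear lifts $\overline{\nabla}$ and $\overline{\zeta}$ on $\mathfrak{H}(Q)$, and have already been shown to be $(M_{\Lambda}(R), M_{\Lambda}(R))$-bimodule homomorphisms, each axiom may be verified on such representatives.

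For coassociativity, I would expand both
\[ (\Delta_{M_{\Lambda}(R)} \otimes \mathrm{id}_{\mathfrak{A}(w)}) \circ \Delta_{M_{\Lambda}(R)}(r \otimes r' \otimes \mathbf{e}\genfrac{[}{]}{0pt}{}{p}{q} + \mathfrak{I}_{\mathbf{w}}) \]
and
\[ (\mathrm{id}_{\mathfrak{A}(w)} \otimes \Delta_{M_{\Lambda}(R)}) \circ \Delta_{M_{\Lambda}(R)}(r \otimes r' \otimes \mathbf{e}\genfrac{[}{]}{0pt}{}{p}{q} + \mathfrak{I}_{\mathbf{w}}) \]
as double sums indexed by $(u, v) \in Q^{(m)} \times Q^{(m)}$. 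The first produces terms $(r \otimes 1_R \otimes \mathbf{e}\genfrac{[}{]}{0pt}{}{p}{v} + \mathfrak{I}_{\mathbf{w}}) \otimes (1_R \otimes 1_R \otimes \mathbf{e}\genfrac{[}{]}{0pt}{}{v}{u} + \mathfrak{I}_{\mathbf{w}}) \otimes (1_R \otimes r' \otimes \mathbf{e}\genfrac{[}{]}{0pt}{}{u}{q} + \mathfrak{I}_{\mathbf{w}})$, while the second produces $(r \otimes 1_R \otimes \mathbf{e}\genfrac{[}{]}{0pt}{}{p}{u} + \mathfrak{I}_{\mathbf{w}}) \otimes (1_R \otimes 1_R \otimes \mathbf{e}\genfrac{[}{]}{0pt}{}{u}{v} + \mathfrak{I}_{\mathbf{w}}) \otimes (1_R \otimes r' \otimes \mathbf{e}\genfrac{[}{]}{0pt}{}{v}{q} + \mathfrak{I}_{\mathbf{w}})$. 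Swapping the dummy indices $u \leftrightarrow v$ identifies the two sums in $\mathfrak{A}(w) \otimes_{M_{\Lambda}(R)} \mathfrak{A}(w) \otimes_{M_{\Lambda}(R)} \mathfrak{A}(w)$.

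For the left counit, I would apply $(\pi_{M_{\Lambda}(R)} \otimes \mathrm{id}_{\mathfrak{A}(w)}) \circ \Delta_{M_{\Lambda}(R)}$ to the same generator. Using $\pi_{M_{\Lambda}(R)}(r \otimes 1_R \otimes \mathbf{e}\genfrac{[}{]}{0pt}{}{p}{u} + \mathfrak{I}_{\mathbf{w}}) = \delta_{p, u}\, r_M \delta_{\mathfrak{s}(u)}$, only the term $u = p$ survives, producing $r_M \delta_{\mathfrak{s}(p)} \otimes_{M_{\Lambda}(R)} (1_R \otimes r' \otimes \mathbf{e}\genfrac{[}{]}{0pt}{}{p}{q} + \mathfrak{I}_{\mathbf{w}})$. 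Transporting to $\mathfrak{A}(w)$ via the canonical isomorphism $M_{\Lambda}(R) \otimes_{M_{\Lambda}(R)} \mathfrak{A}(w) \to \mathfrak{A}(w)$, $f \otimes a \mapsto s_{M_{\Lambda}(R)}(f) a$, expanding $s_{M_{\Lambda}(R)}(r_M \delta_{\mathfrak{s}(p)})$ as $\sum_{\mu \in \Lambda} r \otimes 1_R \otimes \mathbf{e}\genfrac{[}{]}{0pt}{}{\mathfrak{s}(p)}{\mu} + \mathfrak{I}_{\mathbf{w}}$, and using the multiplication rule of $\mathfrak{G}(Q)$ with $\mathfrak{s}(p)$ and $\mathfrak{s}(q)$ regarded as $0$-length paths, the product collapses onto the single term $\mu = \mathfrak{s}(q)$, yielding $r \otimes r' \otimes \mathbf{e}\genfrac{[}{]}{0pt}{}{p}{q} + \mathfrak{I}_{\mathbf{w}}$. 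The right counit axiom is symmetric, using $t_{M_{\Lambda}(R)}$ and the identification $\mathfrak{A}(w) \otimes_{M_{\Lambda}(R)} M_{\Lambda}(R) \cong \mathfrak{A}(w)$, $a \otimes f \mapsto t_{M_{\Lambda}(R)}(f) a$.

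The main obstacle is the bookkeeping: one must confirm that the maps $(\pi_{M_{\Lambda}(R)} \otimes \mathrm{id}_{\mathfrak{A}(w)})$, $(\mathrm{id}_{\mathfrak{A}(w)} \otimes \pi_{M_{\Lambda}(R)})$, and the iterated coproducts descend to bimodule tensor products, and then carefully track how $0$-length paths act as units under composition in $\mathfrak{G}(Q)$ when collapsing the counit sums. Well-definedness over $\otimes_{M_{\Lambda}(R)}$ reduces to the bimodule-homomorphism property of $\pi_{M_{\Lambda}(R)}$ and $\Delta_{M_{\Lambda}(R)}$ already established, and the path-composition bookkeeping is routine once the convention that $\mathfrak{s}(p) \cdot p = p$ and $q \cdot \mathfrak{t}(q) = q$ in the concatenation of paths is made explicit.
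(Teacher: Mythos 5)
Your proposal is correct and follows essentially the same route as the paper: verify coassociativity on generators $r \otimes r' \otimes \mathbf{e}\genfrac{[}{]}{0pt}{}{p}{q} + \mathfrak{I}_{\mathbf{w}}$ by expanding both iterated coproducts as double sums over $Q^{(m)} \times Q^{(m)}$, and verify the counit axioms by the computation $\pi_{M_{\Lambda}(R)}(a_{(1)})\,a_{(2)} = a$ (via the source map) in which only the $u = p$ term survives. The paper's proof is exactly this calculation, so no further comment is needed.
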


\begin{proof}
For any \( r, r^{\prime} \in R \), \( p, q \in Q^{(m)} \) and \( m \in \mathbb{Z}_{\geq 0} \),
\begin{align*}
&(\Delta_{M_{\Lambda}(R)} \otimes {\rm id_{\mathfrak{A}(w)}}) \circ \Delta_{M_{\Lambda}(R)}( r \otimes r^{\prime} \otimes \mathbf{e}\genfrac{[}{]}{0pt}{}{p}{q} + \mathfrak{I}_{\mathbf{w}} ) \\
=& \sum_{u, v \in Q^{(m)}} (r \otimes 1_R \otimes \mathbf{e}\genfrac{[}{]}{0pt}{}{p}{v} + \mathfrak{I}_{\mathbf{w}}) \otimes  (1_R \otimes 1_R \otimes \mathbf{e}\genfrac{[}{]}{0pt}{}{v}{u} + \mathfrak{I}_{\mathbf{w}}) \otimes (1_R \otimes r^{\prime} \otimes \mathbf{e}\genfrac{[}{]}{0pt}{}{u}{q} + \mathfrak{I}_{\mathbf{w}}) \\
=& ( {\rm id_{\mathfrak{A}(w)}} \otimes \Delta_{M_{\Lambda}(R)} ) \circ \Delta_{M_{\Lambda}(R)}( r \otimes r^{\prime} \otimes \mathbf{e}\genfrac{[}{]}{0pt}{}{p}{q} + \mathfrak{I}_{\mathbf{w}} ).
\end{align*}
We write \( \displaystyle a = r \otimes r^{\prime} \otimes \mathbf{e}\genfrac{[}{]}{0pt}{}{p}{q} + \mathfrak{I}_{\mathbf{w}} \). By using Sweedler's notation \( \Delta_{M_{\Lambda}(R)}(a) = a_{(1)} \otimes a_{(2)} \), 
\begin{align*}
\pi_{M_{\Lambda}(R)}(a_{(1)}) a_{(2)} =& \sum_{\substack{u \in Q^{(m)} \\ \lambda, \mu \in \Lambda}} \delta_{p, u} (r_M \delta_{\mathfrak{s}(u)}(\lambda) \otimes 1_R \otimes \mathbf{e}\genfrac{[}{]}{0pt}{}{\lambda}{\mu}) (1_R \otimes r^{\prime} \otimes \mathbf{e}\genfrac{[}{]}{0pt}{}{u}{q}) + \mathfrak{I}_{\mathbf{w}} \\
=& \sum_{u \in Q^{(m)}} \delta_{p,u} (r \otimes r^{\prime} \otimes \mathbf{e}\genfrac{[}{]}{0pt}{}{u}{q}) + \mathfrak{I}_{\mathbf{w}} \\
=& r \otimes r^{\prime} \otimes \mathbf{e}\genfrac{[}{]}{0pt}{}{p}{q} + \mathfrak{I}_{\mathbf{w}}.
\end{align*}
The proof for \( a_{(1)} \pi_{M_{\Lambda}(R)}(a_{(2)}) = a \) is similar. This is the desired conclusion.
\end{proof}

\begin{prop}
The maps \( \Delta_{M_{\Lambda}(R)} \) and \( \pi_{M_{\Lambda}(R)} \) satisfy the conditions \eqref{def:st} - \eqref{def:pmulti}.
\end{prop}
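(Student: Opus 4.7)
The plan is to verify each of \eqref{def:st}--\eqref{def:pmulti} by direct computation on the spanning set \( \{r \otimes r^{\prime} \otimes \mathbf{e}\genfrac{[}{]}{0pt}{}{p}{q} + \mathfrak{I}_{\mathbf{w}} : r, r^{\prime} \in R,\ p, q \in Q^{(m)},\ m \in \mathbb{Z}_{\geq 0}\} \) of \( \mathfrak{A}(w) \), using the explicit formulas for \( \overline{\nabla} \) and \( \overline{\zeta} \). I would take the conditions in the order \eqref{def:punit}, \eqref{def:pmulti}, \eqref{def:Duni}, \eqref{def:st}, \eqref{def:Dmulti}, since each is either elementary or feeds into the next.

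The counital identities \eqref{def:punit} and \eqref{def:pmulti} fall out of two short calculations. Since \( \overline{\zeta} \) is a unital \( \mathbb{K} \)-algebra homomorphism, \( \pi_{M_{\Lambda}(R)}(1_{\mathfrak{A}(w)}) = \zeta(1_{\mathfrak{A}(w)})(1_{M_{\Lambda}(R)}) = 1_{M_{\Lambda}(R)} \). Applying \( \overline{\zeta} \) to the defining sums of \( s_{M_{\Lambda}(R)}(f) \) and \( t_{M_{\Lambda}(R)}(f) \) yields \( \zeta \circ s_{M_{\Lambda}(R)}(f) = \rho_l(f) \) and \( \zeta \circ t_{M_{\Lambda}(R)}(f) = \rho_r(f) \); combined with \( \pi_{M_{\Lambda}(R)}(ab) = \zeta(a)(\pi_{M_{\Lambda}(R)}(b)) \), both \( \pi_{M_{\Lambda}(R)}(a s_L(\pi_L(b))) \) and \( \pi_{M_{\Lambda}(R)}(a t_L(\pi_L(b))) \) reduce to \( \zeta(a)(\pi_{M_{\Lambda}(R)}(b)) \). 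For \eqref{def:Duni}, I would expand \( 1_{\mathfrak{A}(w)} = \sum_{\lambda, \mu \in \Lambda}(1_R \otimes 1_R \otimes \mathbf{e}\genfrac{[}{]}{0pt}{}{\lambda}{\mu} + \mathfrak{I}_{\mathbf{w}}) \), apply \( \Delta_{M_{\Lambda}(R)} \) termwise, and compare to \( 1 \otimes 1 \). Using the tensor product identification \( t_L(\delta_v) x \otimes y = x \otimes s_L(\delta_v) y \) coming from \( \mathfrak{I}_2 \) (with \( \delta_v \in M_{\Lambda}(R) \) the indicator at \( v \)), a short calculation shows that in \( \mathfrak{A}(w) \otimes_{M_{\Lambda}(R)} \mathfrak{A}(w) \) the tensor \( (1_R \otimes 1_R \otimes \mathbf{e}\genfrac{[}{]}{0pt}{}{\lambda}{u} + \mathfrak{I}_{\mathbf{w}}) \otimes (1_R \otimes 1_R \otimes \mathbf{e}\genfrac{[}{]}{0pt}{}{u^{\prime}}{\mu} + \mathfrak{I}_{\mathbf{w}}) \) vanishes whenever \( u \neq u^{\prime} \), which collapses the two sides to the same expression.

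The main obstacle is the Takeuchi condition \eqref{def:st}. Writing the \( u \)-th summand of \( \Delta_{M_{\Lambda}(R)}(a) \) as \( a^{(1)}_u \otimes a^{(2)}_u \) with \( a^{(1)}_u = r \otimes 1_R \otimes \mathbf{e}\genfrac{[}{]}{0pt}{}{p}{u} + \mathfrak{I}_{\mathbf{w}} \) and \( a^{(2)}_u = 1_R \otimes r^{\prime} \otimes \mathbf{e}\genfrac{[}{]}{0pt}{}{u}{q} + \mathfrak{I}_{\mathbf{w}} \), direct computation in \( \mathfrak{H}(Q) \) gives
\[
a^{(1)}_u\, t_{M_{\Lambda}(R)}(f) = r \otimes f(\mathfrak{t}(u)) \otimes \mathbf{e}\genfrac{[}{]}{0pt}{}{p}{u} + \mathfrak{I}_{\mathbf{w}}, \quad a^{(2)}_u\, s_{M_{\Lambda}(R)}(f) = f(\mathfrak{t}(u)) \otimes r^{\prime} \otimes \mathbf{e}\genfrac{[}{]}{0pt}{}{u}{q} + \mathfrak{I}_{\mathbf{w}}.
\]
The crucial point is that because the \( R^{op} \)-slot of \( a^{(1)}_u \) and the \( R \)-slot of \( a^{(2)}_u \) are both \( 1_R \), these two elements also equal \( t_{M_{\Lambda}(R)}(g_u) \cdot a^{(1)}_u \) and \( s_{M_{\Lambda}(R)}(g_u) \cdot a^{(2)}_u \) respectively, where \( g_u := f(\mathfrak{t}(u))_M \) is the constant function with value \( f(\mathfrak{t}(u)) \). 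Applying the tensor product identification \( t_L(g_u) x \otimes y = x \otimes s_L(g_u) y \) summand-by-summand and summing over \( u \) then yields \eqref{def:st}. Finally, \eqref{def:Dmulti} follows: \( \overline{\nabla} \) is a \( \mathbb{K} \)-algebra homomorphism on \( \mathfrak{H}(Q) \), and by \eqref{def:st} the image of \( \Delta_{M_{\Lambda}(R)} \) lies in the Takeuchi product, where the multiplication inherited from \( \mathfrak{A}(w) \otimes_{\mathbb{K}} \mathfrak{A}(w) \) is well defined.
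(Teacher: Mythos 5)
Your proposal is correct and follows essentially the same route as the paper: the constant-function trick \( g_u = f(\mathfrak{t}(u))_M \) exploiting the \( 1_R \)-slots for \eqref{def:st}, the vanishing of the mismatched summands modulo \( \mathfrak{I}_2 \) for \eqref{def:Duni}, multiplicativity of \( \overline{\nabla} \) for \eqref{def:Dmulti}, and the algebra-homomorphism property of \( \zeta \) (equivalently \( \zeta \circ s_{M_{\Lambda}(R)} = \rho_l \), \( \zeta \circ t_{M_{\Lambda}(R)} = \rho_r \)) for \eqref{def:punit} and \eqref{def:pmulti}. No gaps.
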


\begin{proof}
We first show \eqref{def:st}. For any \( r, r^{\prime} \in R \), \( p, q \in Q^{(m)} \) and \( m \in \mathbb{Z}_{\geq 0} \), we write \( a = r \otimes r^{\prime} \otimes \displaystyle\mathbf{e}\genfrac{[}{]}{0pt}{}{p}{q} + \mathfrak{I}_{\mathbf{w}} \). Let \( f \) be an arbitrary element in \( M_{\Lambda}(R) \). We can evaluate that
\begin{align*}
&a_{(1)} t_{M_{\Lambda}(R)}( f ) \otimes a_{(2)} \\
=& \sum_{u \in Q^{(m)}} (r \otimes f(\mathfrak{t}(u)) \otimes \genfrac{[}{]}{0pt}{}{p}{u} + \mathfrak{I}_{\mathbf{w}}) \otimes (1_R \otimes r^{\prime} \otimes \genfrac{[}{]}{0pt}{}{u}{q} + \mathfrak{I}_{\mathbf{w}}) \\
=& \sum_{u \in Q^{(m)}} t_{M_{\Lambda}(R)}( f( \mathfrak{t}(u) )_M ) (r \otimes 1_R \otimes \genfrac{[}{]}{0pt}{}{p}{u} + \mathfrak{I}_{\mathbf{w}}) \otimes (1_R \otimes r^{\prime} \otimes \genfrac{[}{]}{0pt}{}{u}{q} + \mathfrak{I}_{\mathbf{w}}) \\
=& \sum_{u \in Q^{(m)}} (r \otimes 1_R \otimes \genfrac{[}{]}{0pt}{}{p}{u} + \mathfrak{I}_{\mathbf{w}}) \otimes s_{M_{\Lambda}(R)}( f( \mathfrak{t}(u) )_M )  (1_R \otimes r^{\prime} \otimes \genfrac{[}{]}{0pt}{}{u}{q} + \mathfrak{I}_{\mathbf{w}}) \\
=& \sum_{u \in Q^{(m)}} (r \otimes 1_R \otimes \genfrac{[}{]}{0pt}{}{p}{u} + \mathfrak{I}_{\mathbf{w}}) \otimes (f( \mathfrak{t}(u) \otimes r^{\prime} \otimes \genfrac{[}{]}{0pt}{}{u}{q} + \mathfrak{I}_{\mathbf{w}}) \\
=& a_{(1)} \otimes a_{(2)} s_{M_{\Lambda}(R)}(f).
\end{align*}
Therefore \eqref{def:st} is satisfied.

We next prove \eqref{def:Duni}. For any \( \lambda \in \Lambda \), 
\begin{align*}
&(t_{M_{\Lambda}(R)}( \delta_{\lambda} ) \otimes 1_{\mathfrak{A}(w)} - 1_{\mathfrak{A}(w)} \otimes s_{M_{\Lambda}(R)}( \delta_{\lambda} ) ) 
( \sum_{\mu \in \Lambda} 1_R \otimes 1_R \otimes \mathbf{e}\genfrac{[}{]}{0pt}{}{\mu}{\lambda} + \mathfrak{I}_{\mathbf{w}} \otimes 1_{\mathfrak{A}(w)} )\\
=& \sum_{\substack{\mu, \tau, \nu \in \Lambda \\ \lambda \neq \tau}} (1_R \otimes 1_R \otimes \mathbf{e}\genfrac{[}{]}{0pt}{}{\mu}{\lambda} + \mathfrak{I}_{\mathbf{w}}) \otimes (1_R \otimes 1_R \otimes \mathbf{e}\genfrac{[}{]}{0pt}{}{\tau}{\nu} + \mathfrak{I}_{\mathbf{w}}) \in \mathfrak{I}_2.
\end{align*}
Thus we can induce that
\begin{align*}
&\overline{\nabla}( 1_R \otimes 1_R \otimes 1_{\mathfrak{G}(Q)} ) + \mathfrak{I}_2 \\
=& \sum_{\lambda, \mu, \nu \in \Lambda} (1_R \otimes 1_R \otimes \mathbf{e}\genfrac{[}{]}{0pt}{}{\lambda}{\nu} + \mathfrak{I}_{\mathbf{w}}) \otimes (1_R \otimes 1_R \otimes \mathbf{e}\genfrac{[}{]}{0pt}{}{\nu}{\mu} + \mathfrak{I}_{\mathbf{w}}) \\ 
& + \sum_{\substack{\lambda, \mu, \nu, \tau \in \Lambda \\ \nu \neq \tau}} (1_R \otimes 1_R \otimes \mathbf{e}\genfrac{[}{]}{0pt}{}{\lambda}{\nu} + \mathfrak{I}_{\mathbf{w}}) \otimes (1_R \otimes 1_R \otimes \mathbf{e}\genfrac{[}{]}{0pt}{}{\tau}{\mu} + \mathfrak{I}_{\mathbf{w}}) + \mathfrak{I}_2 \\
=& \sum_{\lambda, \mu, \nu, \tau \in \Lambda} (1_R \otimes 1_R \otimes \mathbf{e}\genfrac{[}{]}{0pt}{}{\lambda}{\nu} + \mathfrak{I}_{\mathbf{w}}) \otimes (1_R \otimes 1_R \otimes \mathbf{e}\genfrac{[}{]}{0pt}{}{\tau}{\mu} + \mathfrak{I}_{\mathbf{w}}) + \mathfrak{I}_2.
\end{align*}
Since \( \mathfrak{A}(w) \otimes_{\mathbb{K}} \mathfrak{A}(w) / \mathfrak{I}_2 \cong  \mathfrak{A}(w) \otimes_{M_{\Lambda}(R)} \mathfrak{A}(w) \) as \( \mathbb{K} \)-vector spaces, \eqref{def:Duni} is proved.

The proof for \eqref{def:Dmulti} is similar to that of multiplicativity of the map \( \overline{\nabla} \).

Let us prove \eqref{def:punit}. Because \( 1_{\mathfrak{G}(Q)} = \sum_{\lambda, \mu \in \Lambda} \displaystyle\mathbf{e}\genfrac{[}{]}{0pt}{}{\lambda}{\mu} \), 

\begin{align*}
\pi_{M_{\Lambda}(R)}( 1_{\mathfrak{A}(w)} ) =& \sum_{\lambda, \mu \in \Lambda} \delta_{\lambda, \mu} \delta_{\mu} \\
=& \sum_{\lambda \in \Lambda} \delta_{\lambda} = 1_{M_{\Lambda}(R)}.
\end{align*}

Finally, we give a proof of \eqref{def:pmulti}. Because \( \zeta \) is a \( \mathbb{K} \)-algebra homomorphism, it is sufficient to prove that \( \zeta(a)( 1_{M_{\Lambda}(R)} ) =\zeta( s_{M_{\Lambda}(R)}( \pi_{M_{\Lambda}(R)}( a ) ) )( 1_{M_{\Lambda}(R)} ) = \zeta( t_{M_{\Lambda}(R)}( \pi_{M_{\Lambda}(R)}( a ) ) )( 1_{M_{\Lambda}(R)} ) \) for all \( a \in \mathfrak{A}(w) \). Let \( r, r^{\prime} \in R \), \( p, q \in Q^{(m)} \), and \( m \in \mathbb{Z}_{\geq 0} \). We can evaluate that
\begin{align*}
&\zeta( s_{M_{\Lambda}(R)}( \pi_{M_{\Lambda}(R)}( r \otimes r^{\prime} \otimes \mathbf{e}\genfrac{[}{]}{0pt}{}{p}{q} + \mathfrak{I}_{\mathbf{w}} ) ) )( 1_{M_{\Lambda}(R)} ) \\
=& \sum_{\lambda \in \Lambda} \delta_{p, q} \zeta( r r^{\prime} \otimes 1_R \otimes \mathbf{e}\genfrac{[}{]}{0pt}{}{\mathfrak{s}(q)}{\lambda} + \mathfrak{I}_{\mathbf{w}} )( 1_{M_{\Lambda}(R)} ) \\
=& \delta_{p, q} (r r^{\prime})_M \delta_{\mathfrak{s}(q)} \\
=& \zeta( r \otimes r^{\prime} \otimes \mathbf{e}\genfrac{[}{]}{0pt}{}{p}{q} + \mathfrak{I}_{\mathbf{w}} )( 1_{M_{\Lambda}(R)} )
\end{align*}
The proof for \( \displaystyle \zeta( t_{M_{\Lambda}(R)}( \pi_{M_{\Lambda}(R)}( r \otimes r^{\prime} \otimes \mathbf{e}\genfrac{[}{]}{0pt}{}{p}{q} + \mathfrak{I}_{\mathbf{w}} ) ) )( 1_{M_{\Lambda}(R)} ) = \zeta( r \otimes r^{\prime} \otimes \mathbf{e}\genfrac{[}{]}{0pt}{}{p}{q} + \mathfrak{I}_{\mathbf{w}} )( 1_{M_{\Lambda}(R)} ) \) is similar. Thus we conclude \eqref{def:pmulti}. This completes the proof.
\end{proof}
The sextuplet \( (\mathfrak{A}(w), M_{\Lambda}(R), s_{M_{\Lambda}(R)}, t_{M_{\Lambda}(R)}, \Delta_{M_{\Lambda}(R)}, \pi_{M_{\Lambda}(R)}) \) is therefore a left bialgebroid by the above propositions.

\section{Left bialgebroid homomorphism \( \Phi \)} \label{sec:Phi}

In this section, we induce a left bialgebroid \( \mathfrak{A}(w_{\sigma}) \) in Subsection \ref{sec:Aw} from the settings of the left bialgebroid \( A_{\sigma} \) in Subsection \ref{sec:As}, and construct a left bialgebroid homomorphism \( \Phi \) from \( \mathfrak{A}(w_{\sigma}) \) to \( A_{\sigma} \). This is a generalization of \cite{matsu}.

Let \( A_{\sigma} \) be a left bialgebroid in Subsection \ref{sec:As} and \( \sigma^{ab}_{cd} \in M_{\Lambda}(R) \; ( a, b, c, d \in X ) \) satisfying the condition \eqref{cond:invc}. We define a quiver \( Q \) over \( \Lambda \) by
\begin{equation}
Q := \Lambda \times X, \; \mathfrak{s}(\lambda, x) = \lambda, \; \mathfrak{t}(\lambda, x) = \lambda \deg(x) \;\; (\lambda \in \Lambda, x \in X) \label{quilx}
\end{equation}
and set
\begin{equation}
\mathbf{w}\begin{sumibmatrix} & (\lambda, a) & \\ (\mu, c) & & (\lambda^{\prime}, b) \\ & (\mu^{\prime}, d) & \end{sumibmatrix} = \delta_{\lambda,  \mu} \sigma^{ba}_{dc}( \lambda ) \label{wsig}
\end{equation}
for all \( ( (\lambda, a), ( \lambda^{\prime}, b ) ), ( (\mu, c), (\mu^{\prime}, d) ) \in Q^{(2)} \). 

\begin{prop}
The definition \eqref{wsig} satisfies the condition \( \eqref{face} \). 
\end{prop}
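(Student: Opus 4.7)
The plan is to verify the two conditions of \eqref{face} directly from the definition \eqref{wsig}, using the two parts of the assumption \eqref{cond:invc} one at a time.

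First I would check centrality. Since $\delta_{\lambda,\mu} \in \mathbb{K}$ is central and the hypothesis \eqref{cond:invc} ensures $\sigma^{ba}_{dc}(\lambda) \in Z(R)$ for all choices of indices, the scalar multiple $\delta_{\lambda,\mu}\sigma^{ba}_{dc}(\lambda)$ lies in $Z(R)$. This handles the first bullet of \eqref{face} with essentially no work.

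The substance of the proof is the vanishing condition. Given composable pairs $((\lambda,a),(\lambda',b))$ and $((\mu,c),(\mu',d))$ in $Q^{(2)}$, the composability constraints in the quiver \eqref{quilx} force $\lambda' = \lambda\deg(a)$ and $\mu' = \mu\deg(c)$. Therefore
\[
\mathfrak{s}(\lambda,a)=\lambda,\quad \mathfrak{s}(\mu,c)=\mu,\quad \mathfrak{t}(\lambda',b)=\lambda\deg(a)\deg(b),\quad \mathfrak{t}(\mu',d)=\mu\deg(c)\deg(d).
\]
I would then split into two cases. If $\mathfrak{s}(\lambda,a)\neq \mathfrak{s}(\mu,c)$ then $\lambda\neq\mu$, so $\delta_{\lambda,\mu}=0$ and the weight vanishes immediately. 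If instead $\mathfrak{s}(\lambda,a)=\mathfrak{s}(\mu,c)$ but $\mathfrak{t}(\lambda',b)\neq\mathfrak{t}(\mu',d)$, then $\lambda=\mu$ and $\lambda\deg(a)\deg(b)\neq\lambda\deg(c)\deg(d)$.

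The only delicate point — the closest thing to an obstacle — is matching the index pattern correctly when invoking \eqref{cond:invc}. The assumption reads ``$\lambda\deg(d)\deg(b)\neq\lambda\deg(c)\deg(a)\Rightarrow\sigma^{bd}_{ac}(\lambda)=0$'', and I want to apply it to $\sigma^{ba}_{dc}(\lambda)$. Under the relabeling that identifies the upper/lower index slots of $\sigma^{bd}_{ac}$ with those of $\sigma^{ba}_{dc}$ (namely rule-$b$ to our $b$, rule-$d$ to our $a$, rule-$a$ to our $d$, rule-$c$ to our $c$), the degree inequality becomes exactly $\lambda\deg(a)\deg(b)\neq\lambda\deg(c)\deg(d)$, which is the situation at hand. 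Hence $\sigma^{ba}_{dc}(\lambda)=0$ and again the weight vanishes. Combining the two cases establishes the second bullet of \eqref{face}, completing the proof.
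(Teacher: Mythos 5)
Your proof is correct and follows essentially the same route as the paper: centrality is immediate from \eqref{cond:invc}, the case $\lambda\neq\mu$ is killed by $\delta_{\lambda,\mu}$, and the case $\lambda=\mu$ with distinct targets reduces, via the composability relations $\lambda'=\lambda\deg(a)$, $\mu'=\mu\deg(c)$, to the vanishing clause of \eqref{cond:invc} under the index relabeling you describe. Your explicit verification of that relabeling is a point the paper leaves implicit, but the argument is the same.
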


\begin{proof}
Let \( ( (\lambda, a), ( \lambda^{\prime}, b ) ), ( (\mu, c), (\mu^{\prime}, d) ) \in Q^{(2)} \). \( \mathbf{w}\begin{sumibmatrix} & (\lambda, a) & \\ (\mu, c) & & (\lambda^{\prime}, b) \\ & (\mu^{\prime}, d) & \end{sumibmatrix} \in Z(R) \) is clear because of \( \sigma^{ba}_{dc}( \lambda ) \in Z(R) \).

We next prove that \( \mathbf{w}\begin{sumibmatrix} & (\lambda, a) & \\ (\mu, c) & & (\lambda^{\prime}, b) \\ & (\mu^{\prime}, d) & \end{sumibmatrix} = 0 \) if \( \mathfrak{s}(\lambda, a) \neq \mathfrak{s}(\mu, c) \) or \( \mathfrak{t}(\lambda^{\prime}, b) \neq \mathfrak{t}(\mu^{\prime}, d) \). It follows from \eqref{wsig} that \( \mathbf{w}\begin{sumibmatrix} & (\lambda, a) & \\ (\mu, c) & & (\lambda^{\prime}, b) \\ & (\mu^{\prime}, d) & \end{sumibmatrix} = 0 \) if \( \mathfrak{s}(\lambda, a) \neq \mathfrak{s}(\mu, c) \).

Suppose that \( \mathfrak{t}(\lambda^{\prime}, b) \neq \mathfrak{t}(\mu^{\prime}, d) \). By the definition of the fiber product of the quiver \( Q \), we have
\begin{equation*}
\mathfrak{t}(\lambda^{\prime}, b) = \lambda \deg(a) \deg(b) \;\;\; {\rm and} \;\;\; \mathfrak{t}(\mu^{\prime}, d) = \mu \deg(c) \deg(d).
\end{equation*}
If \( \lambda = \mu \), then \( \mathbf{w}\begin{sumibmatrix} & (\lambda, a) & \\ (\mu, c) & & (\lambda^{\prime}, b) \\ & (\mu^{\prime}, d) & \end{sumibmatrix} = 0 \) because \( \sigma^{ba}_{dc}(\lambda) = 0 \). This completes the proof.
\end{proof}

Therefore we can construct the left bialgebroid \( \mathfrak{A}(w_{\sigma}) := \mathfrak{A}(w) \) in Subsection \ref{sec:Aw}.

\begin{theo} \label{theo:wh}
Let \( r, r^{\prime} \in R \), \( m \in \mathbb{Z}_{\geq 0} \), \( p = ( (\lambda_1, x_1), \ldots, (\lambda_m, x_m) ) \), and \( q = ( (\mu_1, y_1), \ldots, (\mu_m, y_m) ) \in Q^{(m)} \). We define the \( \mathbb{K} \)-linear map \( \overline{\Phi} \colon \mathfrak{H}(Q) \to A_{\sigma} \)  by
\begin{equation*}
\overline{\Phi}( r \otimes r^{\prime} \otimes \mathbf{e}\genfrac{[}{]}{0pt}{}{p}{q} ) = (r_M \otimes r^{\prime}_M) (\delta_{\mathfrak{s}( p )} \otimes \delta_{\mathfrak{s}( q )}) L_{x_1 y_1} \dotsb L_{x_m y_m} + I_{\sigma}.
\end{equation*}
Then \( \overline{\Phi} \) induces a left bialgebroid homomorphism \( ( \Phi \colon \mathfrak{A}(w_{\sigma}) \to A_{\sigma}, {\rm id}_{M_{\Lambda}(R)} ) \).
\end{theo}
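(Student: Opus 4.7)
The plan is to prove the theorem in three stages. First, I would verify that $\overline{\Phi}$ is a $\mathbb{K}$-algebra homomorphism from $\mathfrak{H}(Q)$ to $A_{\sigma}$. Second, I would show $\overline{\Phi}(\mathfrak{I}_{\mathbf{w}}) \subset I_{\sigma}$, so $\overline{\Phi}$ descends to a well-defined $\mathbb{K}$-algebra homomorphism $\Phi \colon \mathfrak{A}(w_{\sigma}) \to A_{\sigma}$. Third, I would verify the four bialgebroid-homomorphism axioms \eqref{def:sp}--\eqref{def:Dp} for the pair $(\Phi, \mathrm{id}_{M_{\Lambda}(R)})$.

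For multiplicativity, the idea is that the commutation relations of type~(3) in the definition of $I_{\sigma}$ let one transport the idempotent block $\delta_{\mathfrak{s}(p')} \otimes \delta_{\mathfrak{s}(q')}$ past the string $L_{x_1 y_1} \cdots L_{x_m y_m}$ appearing in $\overline{\Phi}(r \otimes r' \otimes \mathbf{e}\genfrac{[}{]}{0pt}{}{p}{q})$. Using $T_{\alpha} T_{\beta} = T_{\alpha\beta}$ together with the identity $T_{\alpha}(\delta_{\mu}) = \delta_{\mu \alpha^{-1}}$, an induction on $m$ shows that after passing through all the $L$'s the idempotent $\delta_{\mathfrak{s}(p')}$ becomes $\delta_{\mathfrak{t}(p)(\deg(x_1)\cdots\deg(x_m))^{-1}}$; this equals $\delta_{\mathfrak{s}(p)}$ precisely when $\mathfrak{t}(p) = \mathfrak{s}(p')$ and becomes an orthogonal idempotent otherwise. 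The same analysis on the second tensor factor reproduces the composition rule for $\mathbf{e}\genfrac{[}{]}{0pt}{}{p}{q} \mathbf{e}\genfrac{[}{]}{0pt}{}{p'}{q'}$ in $\mathfrak{G}(Q)$ inside $A_{\sigma}$, while constant functions $r_M$ pass through every $L$ unchanged. Unit preservation uses the generator of type~(5) of $I_{\sigma}$ and the identity $\sum_{\lambda} \delta_{\lambda} = 1_{M_{\Lambda}(R)}$.

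The key step is the vanishing on $\mathfrak{I}_{\mathbf{w}}$. I would fix $(a, b), (c, d) \in Q^{(2)}$ and write $a = (\lambda_a, a_0)$, $b = (\lambda_a \deg(a_0), b_0)$, $c = (\lambda_c, c_0)$, $d = (\lambda_c \deg(c_0), d_0)$ with $a_0, b_0, c_0, d_0 \in X$. The Kronecker factor in \eqref{wsig} restricts the two summations in \eqref{gen:face} to pairs of the form $((\lambda_a, x_0), (\lambda_a \deg(x_0), y_0))$ and $((\lambda_c, x_0), (\lambda_c \deg(x_0), y_0))$ respectively, with $x_0, y_0 \in X$. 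Applying $\overline{\Phi}$, using centrality of the $\sigma$-values together with the simplification $(\sigma^{y_0 x_0}_{b_0 a_0}(\lambda_a))_M \delta_{\lambda_a} = \sigma^{y_0 x_0}_{b_0 a_0} \delta_{\lambda_a}$ in $M_{\Lambda}(R)$, the image of the generator simplifies to $s_{M_{\Lambda}(R)}(\delta_{\lambda_a}) \, t_{M_{\Lambda}(R)}(\delta_{\lambda_c})$ times the bracket $\sum_{x_0, y_0 \in X} s(\sigma^{y_0 x_0}_{b_0 a_0}) L_{x_0 c_0} L_{y_0 d_0} - \sum_{x_0, y_0 \in X} t(\sigma^{d_0 c_0}_{y_0 x_0}) L_{a_0 x_0} L_{b_0 y_0}$. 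After a relabelling of summation indices, this bracket is precisely the type~(4) generator of $I_{\sigma}$ evaluated at $(b_0, d_0, a_0, c_0)$, so it lies in $I_{\sigma}$.

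Once $\Phi$ is well defined, the remaining axioms would be checked on generators. Axioms \eqref{def:sp} and \eqref{def:tp} reduce to the identity $\sum_{\lambda, \mu} f(\lambda)_M \delta_{\lambda} \otimes \delta_{\mu} = f \otimes 1_{M_{\Lambda}(R)}$ in $M_{\Lambda}(R) \otimes_{\mathbb{K}} M_{\Lambda}(R)^{op}$ and its $t$-counterpart. For \eqref{def:piP}, evaluating $\chi$ on the image of $\Phi$ uses $\chi(L_{x_i y_i}) = \delta_{x_i, y_i} T_{\deg(x_i)}$ and $T_{\alpha}(1_{M_{\Lambda}(R)}) = 1_{M_{\Lambda}(R)}$ to produce $\bigl(\prod_i \delta_{x_i, y_i}\bigr) \delta_{\mathfrak{s}(p), \mathfrak{s}(q)} (rr')_M \delta_{\mathfrak{s}(q)} = \delta_{p, q}(rr')_M \delta_{\mathfrak{s}(q)}$, which is exactly $\pi^{\mathfrak{A}(w_{\sigma})}_{M_{\Lambda}(R)}$. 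For \eqref{def:Dp}, the summation over intermediate paths $u \in Q^{(m)}$ in $\Delta^{\mathfrak{A}(w_{\sigma})}_{M_{\Lambda}(R)}$ matches the iterated coproduct $\Delta^{A_{\sigma}}_{M_{\Lambda}(R)}(L_{x_1 y_1} \cdots L_{x_m y_m}) = \sum_{u_1, \ldots, u_m} L_{x_1 u_1} \cdots L_{x_m u_m} \otimes L_{u_1 y_1} \cdots L_{u_m y_m}$, and the remaining $s, t$ factors align by construction. The main obstacle is the vanishing step, as it requires carefully matching the face-weight generator \eqref{gen:face} to the FRT-style commutation relation of type~(4) of $I_{\sigma}$ via the non-trivial relabelling dictated by the top-right/left-bottom convention of \eqref{wsig}.
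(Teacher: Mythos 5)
Your proposal is correct and follows essentially the same route as the paper: show $\overline{\Phi}$ kills the generators \eqref{gen:face} by matching their images with the type~(4) relation of $I_{\sigma}$ (your relabelling $(b_0,d_0,a_0,c_0)$ is exactly the substitution the paper uses), then verify \eqref{def:sp}--\eqref{def:Dp} on generators. The only places you are lighter than the paper are the explicit check that the intermediate $\delta_{\tau}$ factors in the \eqref{def:Dp} computation cancel modulo $I_2$ (the paper does this via the generators $t_{M_{\Lambda}(R)}(\delta_{\lambda})\otimes 1 - 1\otimes s_{M_{\Lambda}(R)}(\delta_{\lambda})$), and a harmless subscript slip in the multiplicativity discussion, where the transported idempotent should read $\delta_{\mathfrak{s}(p^{\prime})(\deg(x_1)\cdots\deg(x_m))^{-1}}$ rather than $\delta_{\mathfrak{t}(p)(\deg(x_1)\cdots\deg(x_m))^{-1}}$.
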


\begin{proof}
We first prove that \( \overline{\Phi}( \mathfrak{I}_{w_{\sigma}} ) = \{ 0 \} \). Since the map \( \overline{\Phi} \) is a \( \mathbb{K} \)-algebra homomorphism, we only need to prove that \( \overline{\Phi}( \alpha ) = 0 \) for every generator \( \alpha \) in \eqref{gen:face}. For any \( (( \mu, a ), ( \mu^{\prime}, b )) \), \( (( \nu, c ), ( \nu^{\prime}, d )) \in Q^{(2)} \), 
\begin{align*}
&\sum_{( (\lambda, x), (\lambda^{\prime}, y) ) \in Q^{(2)}} \overline{\Phi}( \mathbf{w}\begin{sumibmatrix} & (\lambda, x) & \\ (\mu, a) & & (\lambda^{\prime}, y) \\ & (\mu^{\prime}, b) & \end{sumibmatrix} \otimes 1_R \otimes \mathbf{e}\genfrac{[}{]}{0pt}{}{(\lambda, x)}{(\nu, c)} \mathbf{e}\genfrac{[}{]}{0pt}{}{(\lambda^{\prime}, y)}{(\nu^{\prime}, d)} ) \\
=& \sum_{\lambda \in \Lambda, x,y \in X} ( \delta_{\lambda, \mu} \sigma^{yx}_{ba}( \lambda )_M \otimes 1_{M_{\Lambda}(R)} ) ( \delta_{\lambda} \otimes \delta_{\nu} ) L_{xc} L_{yd} + I_{\sigma} \\
=& \sum_{x, y \in X} (\sigma^{yx}_{ba} \otimes 1_{M_{\Lambda}(R)})( \delta_{\mu} \otimes \delta_{\nu} ) L_{xc} L_{yd} + I_{\sigma} \\
=& ( \delta_{\mu} \otimes \delta_{\nu} ) \sum_{x, y \in X} (\sigma^{yx}_{ba} \otimes 1_{M_{\Lambda}(R)}) L_{xc} L_{yd} + I_{\sigma}, \displaybreak[0]  \\
&\sum_{( (\lambda, x), (\lambda^{\prime}, y) ) \in Q^{(2)}} \overline{\Phi}( 1_R \otimes \mathbf{w}\begin{sumibmatrix} & (\nu, c) & \\ (\lambda, x) & & (\nu^{\prime}, d) \\ & (\lambda^{\prime}, y) & \end{sumibmatrix} \otimes \mathbf{e}\genfrac{[}{]}{0pt}{}{(\mu, a)}{(\lambda, x)} \mathbf{e}\genfrac{[}{]}{0pt}{}{(\mu^{\prime}, b)}{(\lambda^{\prime}, y)} ) \\
=& \sum_{\lambda \in \Lambda, x, y \in X} (1_{M_{\Lambda}(R)} \otimes \delta_{\nu, \lambda} \sigma^{dc}_{yx}( \nu )_M) ( \delta_{\mu} \otimes \delta_{\lambda} ) L_{ax} L_{by} + I_{\sigma} \\
=& \sum_{x, y \in X} (1_{M_{\Lambda}(R)} \otimes \sigma^{dc}_{yx}) ( \delta_{\mu} \otimes \delta_{\nu} ) L_{ax} L_{by} + I_{\sigma} \\
=& ( \delta_{\mu} \otimes \delta_{\nu} ) \sum_{x, y \in X} (1_{M_{\Lambda}(R)} \otimes \sigma^{dc}_{yx}) L_{ax} L_{by} + I_{\sigma}.
\end{align*}
Hence \( \overline{\Phi} \) induces a \( \mathbb{K} \)-algebra homomorphism \( \Phi \colon \mathfrak{A}(w_{\sigma}) \to A_{\sigma} \).

Let us prove that the pair of \( \mathbb{K} \)-algebra homomorphisms \( (\Phi, \mathrm{id}_{M_{\Lambda}(R)}) \) satisfies \eqref{def:sp}-\eqref{def:Dp}. We can prove \eqref{def:sp} as follows:
\begin{align*}
\Phi \circ s^{\mathfrak{A}(w_{\sigma})}_{M_{\Lambda}(R)} (f) =& \sum_{\lambda, \mu \in \Lambda} ( f(\lambda) \otimes 1_{M_{\Lambda}(R)} )( \delta_{\lambda} \otimes \delta_{\mu} ) + I_{\sigma} \\
=& \sum_{\lambda \in \Lambda} f(\lambda) \delta_{\lambda} \otimes 1_{M_{\Lambda}(R)} + I_{\sigma} \\
=& f \otimes 1_{M_{\Lambda}(R)} + I_{\sigma} = s^{A_{\sigma}}_{M_{\Lambda}(R)}(f).
\end{align*}
The proof of \eqref{def:tp} is similar to that of \eqref{def:sp}. We next prove \eqref{def:piP}. 
Since \( T_{\deg(a)} \) is a \( \mathbb{K} \)-algebra homomorphism for all \( a \in X \), the left hand side of \eqref{def:piP} satisfies that
\begin{align*}
&\pi^{A_{\sigma}}_{M_{\Lambda}(R)} \circ \Phi( r \otimes r^{\prime} \otimes \mathbf{e}\genfrac{[}{]}{0pt}{}{p}{q} + \mathfrak{I}_{\mathbf{w}} ) \\
=& \chi( (r_M \otimes r^{\prime}_M) (\delta_{\lambda_1} \otimes \delta_{\mu_1}) L_{x_1y_1} \dotsb L_{x_m y_m} + I_{\sigma} )( 1_{M_{\Lambda}(R)} ) \\
=& \delta_{x_1, y_1} \dotsb \delta_{x_m, y_m} (r r^{\prime})_M \delta_{\lambda_1} \delta_{\mu_1}.
\end{align*}
For any \( i \in \{ 1, \dotsb , m-1 \} \), we can induce that \( \lambda_{i+1} = \lambda_{i} \deg(x_i) \) and \( \mu_{i+1} = \mu_i \deg( y_i ) \). This fact implies that
\begin{align*}
&\pi^{\mathfrak{A}(w_{\sigma})}_{M_{\Lambda}(R)} ( r \otimes r^{\prime} \otimes \mathbf{e}\genfrac{[}{]}{0pt}{}{p}{q} + \mathfrak{I}_{\mathbf{w}} ) \\
=& \delta_{\lambda_1, \mu_1} \delta_{x_1, y_1} \dotsb \delta_{x_m, y_m} (r r^{\prime})_M \delta_{\mu_1} \\
=& 
\begin{cases}
(rr^{\prime})_M \delta_{\lambda_1}, & \text{\( ( p = q ); \)} \\
0, & \text{(otherwise).}
\end{cases}
\end{align*}
We conclude \eqref{def:piP} because of the above calculation. Finally, we give a proof of \eqref{def:Dp}.
\begin{align*}
&\Delta^{A_{\sigma}}_{M_{\Lambda}(R)} \circ \Phi ( r \otimes r^{\prime} \otimes \mathbf{e}\genfrac{[}{]}{0pt}{}{p}{q} + \mathfrak{I}_{\mathbf{w}} ) \\
=& \sum_{z_1, \ldots, z_m \in X} (r_M \delta_{\lambda_1} \otimes 1_{M_{\Lambda}(R)} ) L_{x_1 z_1} \dotsb L_{x_m z_m} + I_{\sigma} \otimes (1_{M_{\Lambda}(R)} \otimes r^{\prime}_M \delta_{\mu_1}) L_{z_1 y_1} \dotsb L_{z_m y_m} + I_{\sigma}, \\
&(\Phi \otimes \Phi) \circ \Delta^{\mathfrak{A}(w_{\sigma})}_{M_{\Lambda}(R)} ( r \otimes r^{\prime} \otimes \mathbf{e}\genfrac{[}{]}{0pt}{}{p}{q} + \mathfrak{I}_{\mathbf{w}} ) \\
=& \sum_{\substack{\tau \in \Lambda \\ z_1, \ldots, z_m \in X}} (r_M \delta_{\lambda_1} \otimes \delta_{\tau} ) L_{x_1 z_1} \dotsb L_{x_m z_m} + I_{\sigma} \otimes (\delta_{\tau} \otimes r^{\prime}_M \delta_{\mu_1}) L_{z_1 y_1} \dotsb L_{z_m y_m} + I_{\sigma}.
\end{align*}
For any \( \lambda \in \Lambda \), 
\begin{align*}
&(t^{A_{\sigma}}_{M_{\Lambda}(R)}( \delta_{\lambda} ) \otimes 1_{A_{\sigma}} - 1_{A_{\sigma}} \otimes s^{A_{\sigma}}_{M_{\Lambda}(R)}( \delta_{\lambda} ) ) 
( (1_{M_{\Lambda}(R)} \otimes \delta_{\lambda}) + I_{\sigma} \otimes 1_{A_{\sigma}} )\\
=& \sum_{\substack{\mu \in \Lambda \\ \lambda \neq \mu}} (1_{M_{\Lambda}(R)} \otimes \delta_{\lambda}) + I_{\sigma} \otimes ( \delta_{\mu} \otimes 1_{M_{\Lambda}(R)} ) + I_{\sigma} \in I_2.
\end{align*}
Thus \eqref{def:Dp} is proved.
\end{proof}

\begin{ex} \label{ex:ndy}
Let \( \Lambda := \mathbb{Z} / 2 \mathbb{Z} \) and \( X := \mathbb{Z}/2\mathbb{Z} \). For \( a \in \mathbb{Z} / 2 \mathbb{Z} \), \( \deg(a)(\lambda) = a + \lambda \;\; (\lambda \in \Lambda = \mathbb{Z}/2\mathbb{Z}) \).
The map \( \sigma_i : \Lambda \times X \times X \to X \times X \; (i = 1, 2) \) is defined by the following table.
\begin{table}[h]
\begin{center}
\begin{tabular}{l|rr} \hline
\( (\lambda, a, b) \) & \( \sigma_1(\lambda, a, b) \) & \( \sigma_2(\lambda, a, b) \) \\ \hline
\( (0, 0, 0) \) & \( (0, 0) \) & \( (1, 1) \) \\
\( (0, 0, 1) \) & \( (0, 1) \) & \( (1, 0) \) \\
\( (0, 1, 0) \) & \( (0, 1) \) & \( (1, 0) \) \\
\( (0, 1, 1) \) & \( (0, 0) \) & \( (1, 1) \) \\ \hline
\end{tabular}
\;\;
\begin{tabular}{l|rr} \hline
\( (\lambda, a, b) \) & \( \sigma_1(\lambda, a, b) \) & \( \sigma_2(\lambda, a, b) \) \\ \hline
\( (1, 0, 0) \) & \( (1, 1) \) & \( (0, 0) \) \\
\( (1, 0, 1) \) & \( (1, 0) \) & \( (0, 1) \) \\
\( (1, 1, 0) \) & \( (1, 0) \) & \( (0, 1) \) \\
\( (1, 1, 1) \) & \( (1, 1) \) & \( (0, 0) \) \\ \hline
\end{tabular}
\end{center}
\caption{The definition of \( \sigma_i \)}
\end{table}

\noindent
We denote by \( R \) a \( \mathbb{K} \)-algebra. The map \( \sigma^{ab}_{cd} \in M_{\Lambda}(R) \) is defined by
\begin{equation*}
\sigma^{ab}_{cd}( \lambda ) =
\begin{cases}
1_R, & \text{\( ( \sigma_i( \lambda, a, b ) = ( c, d ) ) \)}; \\
0, & \text{(otherwise)}.
\end{cases}
\end{equation*}
The maps \( \deg \) and \( \sigma^{ab}_{cd} \; (a,b,c,d \in X) \) satisfy the condition \eqref{cond:invc}. Thus we can construct the left bialgebroids \( A_{\sigma} \), \( \mathfrak{A}(w_{\sigma}) \), and the left bialgebroid homomorphism \( \Phi \).
\end{ex}

\begin{rem}
The family \( \sigma = \{ \sigma^{ab}_{cd} \}_{a, b, c, d \in X} \) in Example \ref{ex:ndy} is rigid. Thus, this \( A_{\sigma} \) is a Hopf algebroid whose antipode \( S : A_{\sigma} \to A_{\sigma} \) satisfies \( S((L^{-1})_{ab} + I_{\sigma}) = L_{ab} + I_{\sigma} \; (a, b \in X) \).
\end{rem}

\section{Properties of \( \Phi \)} \label{sec:pro}

In this section, we show that \( \mathfrak{A}(w) \), \( A_{\sigma} \), and \( \Phi \) satisfy a certain universal property in case the base algebra \( R \) is a Frobenius-separable \( \mathbb{K} \)-algebra. In order to complete this purpose, we characterize weak bialgebras (weak Hopf algebras) by generalizing the notion of the antipode \( S^{\mathrm{WHA}} \) and Hayashi's antipode \( f^- \) in \cite[Section 2]{hayas}.

We first recall the convolution product. For an arbitrary \( \mathbb{K} \)-coalgebra \( (C, \Delta, \varepsilon) \) and \( \mathbb{K} \)-algebra \( A \), the \( \mathbb{K} \)-vector space \( {\rm Hom}_{\mathbb{K}}(C, A) \) becomes a \( \mathbb{K} \)-algebra by the following multiplication:
\begin{align*}
&(f \star g)( c ) = f(c_{(1)}) f(c_{(2)}) \;\; (f, g \in {\rm Hom}_{\mathbb{K}}(C, A), c \in C); \\
&1_{{\rm Hom}_{\mathbb{K}}(C, A)}(c) = \varepsilon( c ) 1_A.
\end{align*}
This multiplication is called the convolution product.

Let \( A \) be a \( \mathbb{K} \)-algebra and \( e^+ \), \( e^- \), and \( x^+ \) elements in \( A \). An element \( x^{-} \in A \) is called an \( (e^+, e^-) \)-generalized inverse of \( x^+ \) if the following conditions are satisfied:
\begin{equation*}
x^{\pm} x^{\mp} = e^{\mp}, \;\; x^{\pm} x^{\mp}  x^{\pm} =  x^{\pm}.
\end{equation*}
We can easily check that the \( (e^+, e^-) \)-generalized inverse of \( x^+ \) is unique if it exists.

\begin{defi}
Let \( H \) be a weak bialgebra, \( A \) a \( \mathbb{K} \)-algebra, and \( f^{+} \colon H \to A \) a \( \mathbb{K} \)-algebra homomorphism. A \( \mathbb{K} \)-linear map \( f^- \colon H \to A \) is called an antipode of \( f^+ \) if \( f^- \) is the \( (f^+ \circ \varepsilon_s, f^+ \circ \varepsilon_t) \)-generalized inverse of \( f^+ \) with regard to the convolution product of \( {\rm Hom}_{\mathbb{K}}(H, A) \).
\end{defi}

The following lemmas are generalizations of \cite[Lemma 2.1 and 2.2]{hayas}.

\begin{lemm}
Let \( H \) be a weak bialgebra. 
\begin{enumerate}

  \item This \( H \) is a weak Hopf algebra with the antipode \( S \) if and only if \( S \in {\rm End}_{\mathbb{K}}( H )  \) is the antipode of \( {\rm id}_H \).
  \item If \( H^{\prime} \) is a weak Hopf algebra with the antipode \( S \) and \( f^+ \colon H \to H^{\prime} \) is a weak bialgebra homomorphism, then \( S \circ f^+ \) is the antipode of \( f^{+} \).
\end{enumerate}
\end{lemm}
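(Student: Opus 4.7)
My plan is to work entirely in the convolution algebra $\mathrm{Hom}_{\mathbb{K}}(H,A)$ and rewrite the definition of $f^-$ as the four identities
\[
f^+\star f^- = f^+\circ\varepsilon_t,\ \ f^-\star f^+ = f^+\circ\varepsilon_s,\ \ f^+\star f^-\star f^+ = f^+,\ \ f^-\star f^+\star f^- = f^-.
\]
Both claims then reduce to matching these identities against the weak Hopf algebra axioms.

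For part (1) I specialise to $f^+ = \mathrm{id}_H$ and $A = H$. In the forward direction, if $H$ is a weak Hopf algebra with antipode $S$, then its three defining axioms translate directly to the first, second, and fourth convolution identities (with $S$ in place of $f^-$). The remaining identity $\mathrm{id}_H\star S\star \mathrm{id}_H = \mathrm{id}_H$ I verify by writing the left-hand side as $\varepsilon_t\star\mathrm{id}_H$, which sends $h$ to $\varepsilon_t(h_{(1)})h_{(2)}$; by \eqref{lem:estd2} this equals $\varepsilon_t(1_{(1)})1_{(2)}h$, and \eqref{lem:estu} gives $\varepsilon_t(1_{(1)})1_{(2)}=\varepsilon_t(1_H)=1_H$, so the product is $h$. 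Conversely, the first, second, and fourth convolution identities for an $(\varepsilon_s,\varepsilon_t)$-generalised inverse $S$ of $\mathrm{id}_H$ are literally the three defining axioms of a weak Hopf algebra antipode, so $H$ becomes a weak Hopf algebra with antipode $S$.

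For part (2) the preparatory step is to establish the intertwining $f^+\circ\varepsilon_t^H = \varepsilon_t^{H'}\circ f^+$ and its counterpart for $\varepsilon_s$. Because $f^+$ is a weak bialgebra homomorphism, $(f^+\otimes f^+)\circ\Delta_H = \Delta_{H'}\circ f^+$ and $\varepsilon_{H'}\circ f^+ = \varepsilon_H$; in particular $(f^+\otimes f^+)(\Delta_H(1_H)) = \Delta_{H'}(1_{H'})$. Applying $f^+$ to $\varepsilon_t^H(h) = \varepsilon_H(1_{H,(1)}h)1_{H,(2)}$ and pushing the counit through $f^+$ then identifies the result with $\varepsilon_{H'}(1_{H',(1)}f^+(h))1_{H',(2)} = \varepsilon_t^{H'}(f^+(h))$. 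With this intertwining in hand, each of the four convolution identities defining $f^-:=S\circ f^+$ follows by evaluating at $h$, using that $\Delta_{H'}(f^+(h)) = f^+(h_{(1)})\otimes f^+(h_{(2)})$, and invoking the corresponding convolution identity of part (1) for $S$ on the element $f^+(h)\in H'$.

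The main obstacle is the intertwining step in part (2); it is the only point where the two ambient weak bialgebras interact and it requires careful bookkeeping between the distinct units, coproducts and counits of $H$ and $H'$. Everything else amounts to rewriting axioms in convolution form and evaluating at an element, so the proof is essentially a matter of organising the translation cleanly.
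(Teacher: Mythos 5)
Your proposal is correct and follows essentially the same route as the paper: three of the four convolution identities for the generalized inverse are literally the weak Hopf algebra axioms, the remaining one $\mathrm{id}_H \star S \star \mathrm{id}_H = \mathrm{id}_H$ is checked by a short computation with $\varepsilon_t$ (the paper uses the symmetric computation with $\varepsilon_s$), and part~(2) reduces to part~(1) after establishing that $f^+$ intertwines the source and target counital maps. No gaps.
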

\begin{proof}
We first prove 1. It is clear that \( H \) becomes a weak Hopf algebra whose antipode is \( S \) if \( S \in {\rm End}_{\mathbb{K}}( H ) \) is the antipode of \( {\rm id}_H \). Suppose that \( H \) is a weak Hopf algebra with the antipode \( S \). We give the proof only for \( {\rm id}_H \star S \star {\rm id}_H = {\rm id}_H \). By using \eqref{def:wdm}, 
\begin{align*}
h_{(1)} S( h_{(2)} ) h_{(3)} &= h_{(1)} \varepsilon_s( h_{(2)} ) \\
&= h_{(1)} 1_{(1)} \varepsilon( h_{(2)} 1_{(2)} ) \\
&= h
\end{align*}
for any \( h \in H \). Hence the antipode of \( {\rm id}_H \) is \( S \in {\rm End}_{\mathbb{K}}( H ) \).

Let us show 2. Since \( f^+ \) is a weak bialgebra homomorphism, 
\begin{align*}
\varepsilon_t \circ f^+( h ) &= \varepsilon_{H^{\prime}}( 1_{(1)} f^+(h) ) 1_{(2)} \\
&= \varepsilon_{H^{\prime}}( f^+( 1_{(1)} h ) ) f^+( 1_{(2)} ) \\
&= \varepsilon_H( 1_{(1)} h ) f^+( 1_{(2)} ) \\
&= f^+ \circ \varepsilon_t( h ), \\
( f^+ \star S \circ f^+ )( h ) &= f^+(h)_{(1)} S( f^+(h)_{(2)} ) \\
&= \varepsilon_t \circ f^+( h ) \\
&= f^+ \circ \varepsilon_t( h ).
\end{align*}
Similarly, we can also prove that \( \varepsilon_s \circ f^+ = f^+ \circ \varepsilon_s \) and \( S \circ f^+ \star f^+ = f^+ \circ \varepsilon_s \). The identity \eqref{def:wdm} induce that
\begin{align*}
( f^+ \star S \circ f^+ \star f^+ )( h ) &= f^+( h )_{(1)} \varepsilon_s ( f^+( h )_{(2)} ) \\
&= f^+( h )_{(1)} 1_{(1)} \varepsilon_{H^{\prime}}( f^+( h )_{(2)} 1_{(2)} ) \\
&= f^+( h )
\end{align*}
for all \( h \in H \). The proof for \( S \circ f^+ \star f^+ \star S \circ f^+ = S \circ f^+ \) is similar.
\end{proof}

\begin{lemm} \label{lem:f-hom}
Let \( H \) be a weak bialgebra, \( A \) a \( \mathbb{K} \)-algebra, and \( f^+ \colon H \to A \) a \( \mathbb{K} \)-algebra homomorphism.
\begin{enumerate}

  \item If \( f^+ \) has the antipode \( f^- \), then \( f^- \colon H \to A^{op} \) is a \( \mathbb{K} \)-algebra homomorphism.
  \item In addition to the above situation 1, if \( A \) is a weak bialgebra and \( f^+ \) is a weak bialgebra homomorphism, then the antipode \( f^- \colon H \to A^{bop} \) is a weak bialgebra homomorphism. 
\end{enumerate}
\end{lemm}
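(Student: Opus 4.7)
The plan is to prove parts 1 and 2 in sequence using the defining generalized-inverse identities
\[
f^+\star f^-=f^+\circ\varepsilon_t,\quad f^-\star f^+=f^+\circ\varepsilon_s,\quad f^+\star f^-\star f^+=f^+,\quad f^-\star f^+\star f^-=f^-,
\]
together with the identities for $\varepsilon_s,\varepsilon_t$ collected in the preceding lemmas and uniqueness of generalized inverses in the convolution algebras $\mathrm{Hom}_{\mathbb{K}}(H,A)$ and $\mathrm{Hom}_{\mathbb{K}}(H\otimes H,A)$.

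For part 1, I first verify unitality: coassociativity gives $f^-(1_H)=(f^-\star f^+\star f^-)(1_H)=f^+(\varepsilon_s(1_{(1)}))\,f^-(1_{(2)})$, and specialising \eqref{lem:estd1} at $a=1_H$ yields $\varepsilon_s(1_{(1)})\otimes 1_{(2)}=1_{(1)}\otimes 1_{(2)}$, so the expression collapses to $(f^+\star f^-)(1_H)=f^+(\varepsilon_t(1_H))=f^+(1_H)=1_A$. For anti-multiplicativity $f^-(ab)=f^-(b)f^-(a)$ the plan is to argue by uniqueness of generalised inverses in $\mathrm{Hom}_{\mathbb{K}}(H\otimes H,A)$, with $H\otimes H$ carrying its tensor-product coalgebra structure. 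The map $F(a\otimes b)=f^+(a)f^+(b)=f^+(ab)$ plays the role of $f^+$, and the two candidates $(a,b)\mapsto f^-(ab)$ and $(a,b)\mapsto f^-(b)f^-(a)$ will both be exhibited as the $(F\circ\varepsilon_s^{H\otimes H},F\circ\varepsilon_t^{H\otimes H})$-generalised inverse of $F$. Verifying the convolution identities $F\star G=F\circ\varepsilon_t^{H\otimes H}$ and $G\star F=F\circ\varepsilon_s^{H\otimes H}$ for each candidate relies on multiplicativity \eqref{def:wdm} of $\Delta_H$ and on the counital identities \eqref{lem:estcomm}, \eqref{lem:esco}, \eqref{lem:estm}, which let one repackage everything in terms of the one-variable antipode axioms for $f^\pm$.

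Part 2 builds on part 1. Anti-multiplicativity is already in hand, so the remaining content of being a weak bialgebra homomorphism $H\to A^{bop}$ is $\varepsilon_A\circ f^-=\varepsilon_H$ and $\Delta_A(f^-(h))=f^-(h_{(2)})\otimes f^-(h_{(1)})$. Since $f^+$ is now a weak bialgebra homomorphism, $\varepsilon_A\circ f^+=\varepsilon_H$ and $\Delta_A\circ f^+=(f^+\otimes f^+)\circ\Delta_H$. Applying $\varepsilon_A$ to $f^-\star f^+=f^+\circ\varepsilon_s$ and using \eqref{def:cum} together with \eqref{lem:eest2} yields the counit compatibility; applying $\Delta_A$ and repeating the uniqueness-of-generalised-inverse argument inside $\mathrm{Hom}_{\mathbb{K}}(H,A\otimes A)$, comparing $\Delta_A\circ f^-$ with the flipped composite $(f^-\otimes f^-)\circ\mathrm{flip}\circ\Delta_H$ as two generalised inverses of $\Delta_A\circ f^+$, yields the coproduct compatibility.

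The main obstacle is the anti-multiplicativity step in part 1. Setting up the two-variable convolution argument cleanly requires pinning down the correct idempotents $F\circ\varepsilon_s^{H\otimes H}$, $F\circ\varepsilon_t^{H\otimes H}$, checking all four generalised-inverse identities on each of the two candidates, and carefully tracking several Sweedler indices; the argument hinges on the weak bialgebra axiom \eqref{def:D13} and on the commutativity \eqref{lem:estcomm} between source and target counital images, which is precisely what allows the one-variable antipode axioms for $f^\pm$ to be reassembled in the two-variable convolution algebra. Once this step is secured, part 2 is essentially Sweedler-index bookkeeping on top of part 1.
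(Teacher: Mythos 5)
Your overall architecture (uniqueness of generalized inverses in the auxiliary convolution algebras $\mathrm{Hom}_{\mathbb K}(H\otimes H,A)$ and $\mathrm{Hom}_{\mathbb K}(H,A\otimes A)$) is a legitimate alternative to the paper's direct computations, but as stated it has a concrete error at the very step you identify as the main obstacle. The idempotents you name, $F\circ\varepsilon_s^{H\otimes H}$ and $F\circ\varepsilon_t^{H\otimes H}$ with $F=f^+\circ m$, evaluate to $(a,b)\mapsto f^+(\varepsilon_s(a)\varepsilon_s(b))$ and $(a,b)\mapsto f^+(\varepsilon_t(a)\varepsilon_t(b))$, whereas a direct computation using \eqref{def:wdm} gives $(G_1\star F)(a\otimes b)=(f^-\star f^+)(ab)=f^+(\varepsilon_s(ab))$ for the candidate $G_1(a\otimes b)=f^-(ab)$. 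In a weak bialgebra $\varepsilon_s(ab)\neq\varepsilon_s(a)\varepsilon_s(b)$ in general (only $\varepsilon_s(ab)=\varepsilon_s(\varepsilon_s(a)b)$ and $\varepsilon_s(a)\varepsilon_s(b)=\varepsilon_s(a\varepsilon_s(b))$ hold, by \eqref{lem:est2} and \eqref{lem:estm}; a groupoid algebra already gives a counterexample), so with your idempotents neither candidate is a generalized inverse and the uniqueness argument never starts. The correct idempotents are $e^\pm=f^+\circ\varepsilon_{s/t}\circ m$, i.e.\ $\varepsilon_s$ applied \emph{after} multiplying; with that correction $G_1$ passes all four identities almost for free, and $G_2(a\otimes b)=f^-(b)f^-(a)$ can be checked using \eqref{lem:esco}, \eqref{lem:estd1}, \eqref{lem:estm} — which is essentially the same string of manipulations the paper performs directly in its proof of part 1.

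For part 2 the same correction is needed ($e^\pm=\Delta_A\circ f^+\circ\varepsilon_{s/t}$, which here does coincide with what you intend since $\Delta_A f^+$ is the algebra map), but your closing claim that the rest is "Sweedler-index bookkeeping" hides the real difficulty. The uniqueness argument forces you to verify the sandwiching identity $G\star(\Delta_A f^+)\star G=G$ for the flipped candidate $G(h)=f^-(h_{(2)})\otimes f^-(h_{(1)})$, and that verification reduces to showing $f^-(h_{(2)})f^+(1_{(1)})\otimes f^-(h_{(1)})f^+(1_{(2)})=f^-(h_{(2)})\otimes f^-(h_{(1)})$ — equivalently, that the element the paper calls $J$ equals $\Delta(1_A)$ and that $J\tilde J=f^-(1_{(2)})\otimes f^-(1_{(1)})$. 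This is precisely the content of the paper's Lemma \ref{lem:whip} and the $J$, $\tilde J$ computation (following Nill), and it does not follow from part 1 plus index shuffling. So: the strategy is salvageable and genuinely different in organization from the paper's, but you must (i) replace the idempotents by $f^+\circ\varepsilon_{s/t}\circ m$, and (ii) supply the unit-twist computation in part 2, which is where the substance of the lemma lies.
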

\begin{proof}
Let us first show 1. For \( g, h \in H \), 
\begin{align*}
f^-(gh) &= f^-( g_{(1)} h_{(1)} ) f^+ \circ \varepsilon_t( g_{(2)} h_{(2)} ) \\
&\underset{\eqref{lem:est2}}= f^-( g_{(1)} h_{(1)} ) f^+ \circ \varepsilon_t( g_{(2)} \varepsilon_t( h_{(2)} ) ) \\
&\underset{\eqref{lem:esco}}= f^-( g_{(1)} h_{(1)} ) f^+ \circ \varepsilon_t( \varepsilon_t( g_{(2)} h_{(2)} ) g_{(3)} ) \\
&\underset{\eqref{lem:estm}}= f^-( g_{(1)} h_{(1)} ) f^+( \varepsilon_t( g_{(2)} h_{(2)} ) \varepsilon_t( g_{(3)} ) ) \\
&= f^-( g_{(1)} h_{(1)} ) f^+( \varepsilon_t( g_{(2)} h_{(2)} ) g_{(3)} ) f^-(g_{(4)}) \\
&\underset{\eqref{lem:esco}}= f^-( g_{(1)} h_{(1)} ) f^+( g_{(2)} \varepsilon_t( h_{(2)} ) ) f^-( g_{(3)} ) \\
&= f^-( g_{(1)} h_{(1)} ) f^+( g_{(2)} ) f^+( h_{(2)} ) f^-( h_{(3)} ) f^-( g_{(3)} ) \\
&= f^+ \circ \varepsilon_s ( g_{(1)} h_{(1)} ) f^-( h_{(2)} ) f^-( g_{(2)} ) \\
&\underset{\eqref{lem:est2}}= f^+ \circ \varepsilon_s ( \varepsilon_s( g_{(1)} ) h_{(1)} ) f^-( h_{(2)} ) f^-( g_{(2)} ) \\
&= f^-( \varepsilon_s( g_{(1)} )_{(1)} h_{(1)} ) f^+( \varepsilon_s( g_{(1)} )_{(2)} h_{(2)} ) f^-( h_{(3)} ) f^-( g_{(2)} ) \\
&\underset{\eqref{lem:coes}}= f^-( 1_{(1)} h_{(1)} ) f^+( \varepsilon_s( g_{(1)} ) 1_{(2)} h_{(2)} ) f^-( h_{(3)} ) f^-( g_{(2)} ) \\
&\underset{\eqref{def:wdm}}= f^-( h_{(1)} ) f^+( \varepsilon_s( g_{(1)} ) h_{(2)} ) f^-( h_{(3)} ) f^-( g_{(2)} ) \\
&= f^-( h_{(1)} ) f^+ \circ \varepsilon_s( g_{(1)} ) f^+ \circ \varepsilon_t( h_{(2)} ) f^-( g_{(2)} ) \\
&\underset{\eqref{lem:estcomm}}= f^-( h_{(1)} )  f^+ \circ \varepsilon_t( h_{(2)} )  f^+ \circ \varepsilon_s( g_{(1)} ) f^-( g_{(2)} ) \\
&= f^-( h ) f^-( g ).
\end{align*}
In addition, this \( f^- \) preserves the unit. By using \eqref{def:D13} and \eqref{lem:estu},
\begin{align*}
f^-(1_H) &= f^-( 1_{(1)} ) f^+( 1_{(2)} 1^{\prime}_{(1)} ) f^-( 1^{\prime}_{(2)} ) \\
&= f^+ \circ \varepsilon_s( 1_H ) f^+ \circ \varepsilon_t( 1_H ) \\
&= 1_{A}.
\end{align*}
Therefore \( f^- \colon H \to A^{op} \) is a \( \mathbb{K} \)-algebra homomorphism. 

We next prove 2. In order to complete this purpose, we assume the following lemma for the moment (cf.~\cite[Lemma B1 and B2]{nil}).
\begin{lemm} \label{lem:whip}
Let \( H \) and \( A \) be a weak bialgebra. If a weak bialgebra homomorphism \( f^+ \colon H \to A \) has the antipode \( f^- \), the following conditions are satisfied for all \( g, h \in H \):
\begin{align}
&g h_{(1)} \otimes f^-( h_{(2)} ) f^+( h_{(3)} ) = g_{(1)} h_{(1)} \otimes f^-( g_{(2)} h_{(2)} ) f^+( g_{(3)} ) f^+( h_{(3)} ); \label{lem:gh-+} \\
&h_{(1)} g \otimes f^+( h_{(2)} ) f^-( h_{(3)} ) = h_{(1)} g_{(1)} \otimes f^+( h_{(2)} ) f^+( g_{(2)} ) f^-( h_{(3)} g_{(3)} ); \label{lem:hg+-} \\
&f^-( h_{(1)} ) \otimes f^-( h_{(2)} ) = f^-( h_{(1)} ) f^+( h_{(4)} ) f^-( h_{(5)} ) \otimes f^-( h_{(2)} ) f^+( h_{(3)} ) f^-( h_{(6)} ). \label{lem:145}
\end{align}
\end{lemm}
\noindent
For the comultiplicativity of \( f^- \), it is equivarent to show that
\begin{equation*}
f^-( h_{(2)} ) \otimes f^-( h_{(1)} ) = f^-( h )_{(1)} \otimes f^-( h )_{(2)}
\end{equation*}
for all \( h \in H \). We set
\begin{align*}
J = ( f^-(1_{(2)}) \otimes f^-(1_{(1)}) ) \Delta( f^+ \circ \varepsilon_t( 1_{(3)} ) ), \\
\tilde{J} = \Delta( f^+ \circ \varepsilon_s( 1_{(1)} ) )( f^-(1_{(3)}) \otimes f^-(1_{(2)}) ).
\end{align*}
For \( J \) and \( \tilde{J} \), see \cite[Proposition B4]{nil}. Let \( h \) be an arbitrary element in \( H \). 
\begin{align*}
&(f^-( h_{(2)} ) \otimes f^-( h_{(1)} )) J \\
=& (f^-(1_{(2)} h_{(2)}) \otimes f^-(1_{(1)} h_{(1)})) \Delta( f^+( 1_{(3)} ) f^-( 1_{(4)} ) ) \\
\underset{\eqref{lem:hg+-}}=& (f^-(1_{(2)} h_{(2)}) \otimes f^-(1_{(1)} h_{(1)})) \Delta( f^+( 1_{(3)} h_{(3)} ) f^-( 1_{(4)} h_{(4)} ) ) \\
\underset{\eqref{def:wdm}}=& (f^-( h_{(2)} ) \otimes f^-( h_{(1)} )) \Delta( f^+( h_{(3)} ) f^-( h_{(4)} ) ) \\
=& (f^+ \circ \varepsilon_s( h_{(2)} ) \otimes f^-( h_{(1)} ) f^+( h_{(3)} )) \Delta( f^-( h_{(4)} ) ) \\
\underset{\eqref{lem:estd2}}=& (f^+ \circ \varepsilon_s( 1_{(2)} ) \otimes f^-( h_{(1)} 1_{(1)} ) f^+( h_{(2)} )) \Delta( f^-( h_{(3)} ) ) \\
=& (f^+ \circ \varepsilon_s( 1_{(2)} ) \otimes f^-( 1_{(1)} ) f^+ \circ \varepsilon_s( h_{(1)} )) \Delta( f^-( h_{(2)} ) ) \\
\underset{\eqref{lem:estd1}}=& (f^+ \circ \varepsilon_s( 1_{(2)} ) \otimes f^-( 1_{(1)} ) f^+ \circ \varepsilon_s( 1^{\prime}_{(1)} )) \Delta( f^-( h 1^{\prime}_{(2)} ) ) \\
=& ( f^-( 1_{(2)} ) f^+( 1_{(3)} ) \otimes f^-( 1_{(1)}^{\prime} 1_{(1)} ) f^+( 1_{(2)}^{\prime} ) ) \Delta( f^-( h 1^{\prime}_{(3)} ) ) \\
\underset{\eqref{lem:gh-+}}=& (f^-(1_{(2)} 1^{\prime}_{(2)}) f^+(1_{(3)} 1^{\prime}_{(3)}) \otimes f^-(1_{(1)} 1^{\prime}_{(1)}) f^+(1_{(4)})) \Delta( f^-( h 1_{(5)} ) ) \\
\underset{\eqref{def:wdm}}=& (f^-(1_{(2)}) \otimes f^-(1_{(1)})) \Delta( f^+(1_{(3)}) ) \Delta( f^-(1_{(4)}) ) \Delta( f^-( h ) ) \\
=& J ( f^-(h)_{(1)} \otimes f^-(h)_{(2)} ).
\end{align*}
By using \eqref{def:D13}, \eqref{lem:estu}, and \eqref{lem:estd1}, \( J \) satisfies that
\begin{align*}
J &= (f^+ \circ \varepsilon_s( 1_{(2)} ) \otimes f^-( 1_{(1)} ) f^+(1_{(3)})) \Delta( f^-(1_{(4)}) ) \\
&= (f^+( 1_{(1)} ) \otimes f^-( 1^{\prime}_{(1)} ) f^+( 1^{\prime}_{(2)} 1_{(2)} )) \Delta(f^-(1^{\prime}_{(3)})) \\
&= (f^+( 1_{(1)} ) \otimes f^+ \circ \varepsilon_s( 1^{\prime}_{(1)} ) f^+( 1_{(2)} )) \Delta(f^-(1^{\prime}_{(2)})) \\
&= (f^+( 1_{(1)} ) \otimes f^+( 1^{\prime}_{(1)} 1_{(2)} )) \Delta(f^-(1^{\prime}_{(2)})) \\
&= (f^+( 1_{(1)} ) \otimes f^+( 1_{(2)} )) \Delta(f^-(1_{(3)})) \\
&= \Delta( f^+ \circ \varepsilon_t( 1_H ) ) \\
&= \Delta( 1_A ).
\end{align*}
Similarly, we can prove that \( \tilde{J} = \Delta( 1_A ) \). The identities \eqref{def:wdm}, \eqref{lem:hg+-}, and \eqref{lem:145} induce that
\begin{align*}
J \tilde{J} &= J \Delta( f^-( 1_{(1)} ) ) \Delta( f^+( 1_{(2)} ) ) ( f^-(1_{(4)}) \otimes f^-(1_{(3)}) ) \\
&= (f^-(1_{(2)}) \otimes f^-(1_{(1)})) J \Delta( f^+( 1_{(3)} ) ) ( f^-(1_{(5)}) \otimes f^-(1_{(4)}) ) \\
&= (f^-(1_{(2)} 1^{\prime}_{(2)}) \otimes f^-( 1_{(1)} 1^{\prime}_{(1)} )) \Delta( f^+(1_{(3)}) f^-(1_{(4)}) f^+(1^{\prime}_{(3)}) ) (f^-( 1^{\prime}_{(5)} ) \otimes f^-( 1^{\prime}_{(4)} )) \\
&= (f^-(1_{(2)} 1^{\prime}_{(2)}) \otimes f^-( 1_{(1)} 1^{\prime}_{(1)} )) \\
& \;\;\; \times \Delta( f^+(1_{(3)} 1^{\prime}_{(3)}) f^-(1_{(4)} 1^{\prime}_{(4)}) f^+(1^{\prime}_{(5)}) ) (f^-( 1^{\prime}_{(7)} ) \otimes f^-( 1^{\prime}_{(6)} )) \\
&= (f^-(1_{(2)}) \otimes f^-( 1_{(1)} )) \Delta( f^+(1_{(3)}) ) (f^-( 1_{(5)} ) \otimes f^-( 1_{(4)} )) \\
&= f^-( 1_{(2)} ) f^+( 1_{(3)} ) f^-( 1_{(6)} ) \otimes f^-( 1_{(1)} ) f^+( 1_{(4)} ) f^-( 1_{(5)} ) \\
&= f^-(1_{(2)}) \otimes f^-(1_{(1)}).
\end{align*}
We can calculate that
\begin{align*}
( f^-(h)_{(1)} \otimes f^-(h)_{(2)} ) &= \Delta(1_A)( f^-(h)_{(1)} \otimes f^-(h)_{(2)} ) \\
&= J ( f^-(h)_{(1)} \otimes f^-(h)_{(2)} ), \\
&= (f^-(h_{(2)}) \otimes f^-(h_{(1)}))J \\ 
&= (f^-(h_{(2)}) \otimes f^-(h_{(1)})) \Delta( 1_A ) \\
&= (f^-(h_{(2)}) \otimes f^-(h_{(1)})) J \tilde{J} \\
&= (f^-(h_{(2)}) \otimes f^-(h_{(1)})) (f^-(1_{(2)}) \otimes f^-(1_{(1)})) \\
&= (f^-(h_{(2)}) \otimes f^-(h_{(1)}))
\end{align*}
for any \( h \in H \). Thus \( f^- \colon H \to A^{bop} \) preserves the comultiplication.
By using \eqref{lem:eest2}, we can prove that \( f^- \) is counital:
\begin{align*}
\varepsilon_A \circ f^-( h ) &= \varepsilon_A( f^-( h_{(1)} ) f^+ \circ \varepsilon_t( h_{(2)} ) ) \\
&= \varepsilon_A( f^-( h_{(1)} ) f^+( h_{(2)} ) ) \\
&= \varepsilon_A( \varepsilon_s \circ f^+( h ) ) \\
&= \varepsilon_A \circ f^+( h ) \\
&= \varepsilon_H( h )
\end{align*}
for \( h \in H \). This is the desired conclusion. 
\end{proof}

\begin{proof}[{\bf Proof of Lemma \ref{lem:whip}}]
We first prove \eqref{lem:gh-+}. For all \( g, h \in H \),
\begin{align*}
g h_{(1)} \otimes f^-( h_{(2)} ) f^+( h_{(3)} ) &= g h_{(1)} \otimes f^+ \circ \varepsilon_s( h_{(2)} ) \\
&= g h 1_{(1)} \otimes f^+ \circ \varepsilon_s( 1_{(2)} ) \\
&= g_{(1)} h_{(1)} \otimes f^+ \circ \varepsilon_s( g_{(2)} h_{(2)} ) \\
&= g_{(1)} h_{(1)} \otimes f^-( g_{(2)} h_{(2)} ) f^+( g_{(3)} ) f^+( h_{(3)} ).
\end{align*}
Here we use the identity \eqref{lem:estd2}. The proof for \eqref{lem:hg+-} is similar. 

Let us evaluate \eqref{lem:145}. By using \eqref{lem:gh-+}, \eqref{lem:hg+-}, and Lemma \ref{lem:f-hom}-1, 
\begin{align*}
&f^-( h_{(1)} ) \otimes f^-( h_{(2)} ) \\
=& f^-(h_{(1)}) f^-(1_{(1)}) f^+(1_{(2)}) f^-(1_{(3)}) \otimes f^-(h_{(2)}) f^+(h_{(3)}) f^-(h_{(4)}) \\
=& f^-(1_{(1)} h_{(1)}) f^+(1_{(4)}) f^-(1_{(5)}) \otimes f^-(1_{(2)} h_{(2)}) f^+(1_{(3)} h_{(3)}) f^-(h_{(4)}) \\
=& f^-(1_{(1)} h_{(1)}) f^+(1_{(4)} h_{(4)}) f^-(1_{(5)} h_{(5)}) \otimes f^-(1_{(2)} h_{(2)}) f^+(1_{(3)} h_{(3)}) f^-(h_{(6)}) \\
=& f^-( h_{(1)} ) f^+( h_{(4)} ) f^-( h_{(5)} ) \otimes f^-( h_{(2)} ) f^+( h_{(3)} ) f^-( h_{(6)} )
\end{align*}
for any \( h \in H \). This completes the proof. 
\end{proof}

The convolution product and the antipode \( f^- \) generalize the notion of the Hopf envelope in \cite{benp}. 

\begin{defi} \label{def:WHC}
Let \( H \) be a weak bialgebra, \( \overline{H} \) a weak Hopf algebra, and \( \iota \colon H \to \overline{H} \) a weak bialgebra homomorphism. A Hopf closure of \( H \) is a pair \( (\overline{H}, \iota) \) satisfying the following universal property:
\begin{itemize}
\item[]
For any weak bialgebra \( B \) and any weak bialgebra homomorphism \( f^+ \colon H \to B \) with the antipode \( f^- \), there exists a unique weak bialgebra homomorphism \( F \colon \overline{H} \to B \) such that the following diagram is commutative:
\end{itemize}
\[
\xymatrix@C=30pt@R=30pt{
H \ar[rd]_-{f^+} \ar[r]^-{\iota} & \overline{H} \ar[d]^-{F} \\
 & B. \\
}
\]

We can induce that \( \overline{H} \) is unique up to isomorphism if there exists.
\end{defi}

\begin{rem}
\begin{enumerate}
\item In \cite{benp}, the weak bialgebra \( B \) is always a weak Hopf algebra with the antipode \( S \). Thus Definition \ref{def:WHC} is a generalization of Definition 3.14 in \cite{benp} because \( S \circ f^+ \) gives an antipode of \( f^+ \in {\rm Hom}_{\mathbb{K}}(H, B) \).

\item Let \( H \) be a face algebra. Hayashi \cite{hayas} considered construction of the Hopf closure \( \overline{H} \) if \( H \) is coquasitriangular and closurable. Then this \( \overline{H} \) satisfies Definition \ref{def:WHC} replaced with the notion of face algebras, that is to say, \( f^+ \colon H \to B \) and \( F \colon \overline{H} \to B \) are face algebra homomorphisms (see \cite[Theorem 5.1, 8.2, and 8.3] {hayas}).
\end{enumerate}
\end{rem}

Let \( \Lambda \) be a non empty finite set and \( X \) a finite set. For a left bialgebroid \( A_{\sigma} \) in Subsection \ref{sec:As}, we suppose that the \( \mathbb{K} \)-algebra \( R \) is a Frobenius-separable \( \mathbb{K} \)-algebra with an idempotent Frobenius system \( (\psi, e^{(1)} \otimes e^{(2)}) \). This \( A_{\sigma} \) has a weak bialgebra structure by Proposition \ref{prop:LWF}. The quiver \( Q \) defined by \eqref{quilx} and elements \( \mathbf{w}\begin{sumibmatrix} & (\lambda, a) & \\ (\mu, c) & & (\lambda^{\prime}, b) \\ & (\mu^{\prime}, d) & \end{sumibmatrix} \in R \; ( ( (\lambda, a), ( \lambda^{\prime}, b ) ), ( (\mu, c), (\mu^{\prime}, d) ) \in Q^{(2)} ) \) in \eqref{wsig} give birth to a weak bialgebra \( \mathfrak{A}(w_{\sigma}) \) and its homomorphism \( \Phi \) in Section \ref{sec:Phi}.

\begin{theo}
If \( \sigma \) is rigid, the pair \( (A_{\sigma}, \Phi) \) satisfies the following universal property:
\begin{itemize}
\item[]
For any \( \mathbb{K} \)-algebra \( A \) and any \( \mathbb{K} \)-algebra homomorphism \( f^+ \colon \mathfrak{A}(w_{\sigma}) \to A \) with the antipode \( f^- \), there exists a unique \( \mathbb{K} \)-algebra homomorphism \( F \colon A_{\sigma} \to A \) such that the following diagram is commutative:
\end{itemize}
\[
\xymatrix@C=30pt@R=30pt{
\mathfrak{A}(w_{\sigma}) \ar[rd]_-{f^+} \ar[r]^-{\Phi} & A_{\sigma} \ar[d]^-{F} \\
 & A. \\
}
\]
If this \( \mathbb{K} \)-algebra \( A \) has a weak bialgebra structure \( (A, \Delta, \varepsilon) \) and \( f^+ \) is a weak bialgebra homomorphism, then so is \( F \).
\end{theo}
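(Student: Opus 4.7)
The plan is to construct $F \colon A_\sigma \to A$ on the generators of $A_\sigma$ and verify that it kills the defining ideal $I_\sigma$. Put $\tilde L_{ab} := \sum_{\lambda, \mu \in \Lambda} (1_R \otimes 1_R \otimes \mathbf{e}\genfrac{[}{]}{0pt}{}{(\lambda,a)}{(\mu,b)} + \mathfrak{I}_{\mathbf{w}}) \in \mathfrak{A}(w_\sigma)$; the formula defining $\Phi$ in Theorem \ref{theo:wh} immediately yields $\Phi(\tilde L_{ab}) = L_{ab} + I_\sigma$. Define $\overline F \colon \mathbb{K}\langle \Lambda X \rangle \to A$ as the $\mathbb{K}$-algebra homomorphism extending $\overline F(\xi) := f^+(s^{\mathfrak{A}(w_\sigma)}_{M_\Lambda(R)}(\xi_{(1)}) t^{\mathfrak{A}(w_\sigma)}_{M_\Lambda(R)}(\xi_{(2)}))$ for $\xi = \xi_{(1)} \otimes \xi_{(2)} \in M_\Lambda(R) \otimes M_\Lambda(R)^{op}$, $\overline F(L_{ab}) := f^+(\tilde L_{ab})$, $\overline F((L^{-1})_{ab}) := f^-(\tilde L_{ab})$, and $\overline F(\emptyset) := 1_A$. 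Uniqueness of the induced $F$ follows because $F \circ \Phi = f^+$ forces the values of $F$ on $s(f)$, $t(g)$, and $L_{ab} + I_\sigma$, and the values on $(L^{-1})_{ab} + I_\sigma$ are then pinned down uniquely by the matrix inverse relation of type (2).

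For existence, check $\overline F(I_\sigma) = \{0\}$. Generators of types (1) and (5) are routine. For types (3) and (4), the corresponding identities lift to relations already holding in $\mathfrak{A}(w_\sigma)$: a direct computation in $\mathfrak{H}(Q)$ using the multiplication rule of $\mathfrak{G}(Q)$ and the face relation $\mathfrak{I}_{\mathbf{w}}$ yields, for instance, $s^{\mathfrak{A}(w_\sigma)}(T_{\deg(a)}(f)) \tilde L_{ab} = \tilde L_{ab} s^{\mathfrak{A}(w_\sigma)}(f)$ (and its three analogues) together with the face quantum Yang-Baxter identity corresponding to type (4), and applying the $\mathbb{K}$-algebra homomorphism $f^+$ kills these.

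The main obstacle is the $L L^{-1}$ relations of type (2), where the antipode $f^-$ enters decisively. Equip $\mathfrak{A}(w_\sigma)$ with the weak bialgebra structure from Proposition \ref{prop:LWF}, using the inherited idempotent Frobenius system $(\Psi, E^{(1)} \otimes E^{(2)}) = (f \mapsto \sum_\lambda \psi(f(\lambda)), \sum_\tau \delta_\tau e^{(1)}_M \otimes \delta_\tau e^{(2)}_M)$ on $M_\Lambda(R)$. The defining antipode identity then gives the convolution relation $(f^+ \star f^-)(h) = f^+ \circ \varepsilon_t(h)$ for all $h \in \mathfrak{A}(w_\sigma)$. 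Direct calculation yields $\pi_{M_\Lambda(R)}(\tilde L_{ab}) = \delta_{a, b} 1_{M_\Lambda(R)}$, and combined with the identity $\varepsilon_t = s_{M_\Lambda(R)} \circ \pi_{M_\Lambda(R)}$ valid for any weak bialgebra arising from a left bialgebroid (which follows from Proposition \ref{prop:LWF} and the Frobenius identity), this gives $(f^+ \star f^-)(\tilde L_{ab}) = \delta_{a, b} 1_A$. Expanding the convolution via the bialgebroid comultiplication $\Delta_{M_\Lambda(R)}(\tilde L_{ab})$, using Lemma \ref{lem:f-hom}(1) (antimultiplicativity of $f^-$), and absorbing the resulting Frobenius-idempotent factors by the identity $\sum_\tau f^+(t(\delta_\tau e^{(1)}_M)) f^-(s(\delta_\tau e^{(2)}_M)) = 1_A$ (derived by evaluating the convolution identity at $h = s_{M_\Lambda(R)}(1_{M_\Lambda(R)})$), one reconciles this with $\sum_c \overline F(L_{ac}) \overline F((L^{-1})_{cb})$ to obtain $\sum_c \overline F(L_{ac}) \overline F((L^{-1})_{cb}) = \delta_{a, b} 1_A$; this index-matching reconciliation is the technically subtle step. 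The mirror relation is handled symmetrically from $(f^- \star f^+) = f^+ \circ \varepsilon_s$ and $\varepsilon_s = t_{M_\Lambda(R)} \circ \pi_{M_\Lambda(R)}$.

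For the final assertion, when $A$ is a weak bialgebra and $f^+$ is a weak bialgebra homomorphism, Lemma \ref{lem:f-hom}(2) upgrades $f^-$ to a weak bialgebra homomorphism $\mathfrak{A}(w_\sigma) \to A^{bop}$. The coalgebra and counit compatibilities $\Delta_A \circ F = (F \otimes F) \circ \Delta_{A_\sigma}$ and $\varepsilon_A \circ F = \varepsilon_{A_\sigma}$ are then verified on each class of generator of $A_\sigma$: on $s(f)$, $t(g)$, and $L_{ab} + I_\sigma$ from $F \circ \Phi = f^+$ and the weak bialgebra morphism property of $\Phi$; on $(L^{-1})_{ab} + I_\sigma$ from the weak bialgebra morphism property of $f^-$ combined with the explicit formula $\Delta_{M_\Lambda(R)}((L^{-1})_{ab} + I_\sigma) = \sum_c ((L^{-1})_{cb} + I_\sigma) \otimes ((L^{-1})_{ac} + I_\sigma)$ and the Frobenius-system lift to the weak bialgebra comultiplication.
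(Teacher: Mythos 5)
Your overall architecture matches the paper's: define $\overline F$ on the free algebra by sending $L_{ab}$ to $f^+(\tilde L_{ab})$ and $(L^{-1})_{ab}$ to $f^-(\tilde L_{ab})$, kill $I_\sigma$, check $F\circ\Phi=f^+$, and upgrade to a weak bialgebra homomorphism via Lemma \ref{lem:f-hom}. Your uniqueness argument is a genuine (and valid) simplification: since any algebra homomorphism $F'$ out of $A_\sigma$ sends the matrices $(L_{ab})$ and $((L^{-1})_{ab})$ to mutually two-sided-inverse matrices in $M_X(A)$, agreement on $s$, $t$ and the $L_{ab}$ forces agreement on the $(L^{-1})_{ab}$ by uniqueness of two-sided inverses; the paper instead routes this through $S^{\mathrm{WHA}}$ and the uniqueness of the $(f^+\circ\varepsilon_s,f^+\circ\varepsilon_t)$-generalized inverse. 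Your treatment of the type (2) and type (4) generators, and of the final weak-bialgebra-homomorphism assertion, is essentially the paper's.

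There is, however, a genuine gap in your treatment of the type (3) generators. You claim all four commutation relations in (3) ``lift to relations already holding in $\mathfrak{A}(w_\sigma)$'' and are disposed of by ``applying the $\mathbb{K}$-algebra homomorphism $f^+$''. This is true only for the two generators involving $L_{ab}$, where $\overline F$ produces expressions entirely in the image of $f^+$ and the identity $s^{\mathfrak{A}(w_\sigma)}(T_{\deg(a)}(f))\tilde L_{ab}=\tilde L_{ab}\,s^{\mathfrak{A}(w_\sigma)}(f)$ (which in fact already holds in $\mathfrak{H}(Q)$) does the job. For the two generators involving $(L^{-1})_{ab}$, the image under $\overline F$ is a mixed expression such as $f^+(s(g))\,f^-(\tilde L_{ab})-f^-(\tilde L_{ab})\,f^+(s(T_{\deg(b)}(g)))$; since $(L^{-1})_{ab}$ has no preimage in $\mathfrak{A}(w_\sigma)$, there is no relation in $\mathfrak{A}(w_\sigma)$ to which one can apply $f^+$. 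Verifying that this vanishes requires the same machinery you deploy for type (2): inserting $f^+\circ\varepsilon_s(1)$ via the convolution identities, the antimultiplicativity of $f^-$, the commutation $\varepsilon_s(a)\varepsilon_t(b)=\varepsilon_t(b)\varepsilon_s(a)$, and an explicit computation of $\varepsilon_s$ and $\varepsilon_t$ on the elements $1_R\otimes r\otimes\mathbf{e}\genfrac{[}{]}{0pt}{}{\mu}{\lambda}+\mathfrak{I}_{\mathbf{w}}$ (the paper's Lemma \ref{lem:estA}); this is one of the longest computations in the paper's proof and cannot be waved through as a consequence of relations in $\mathfrak{A}(w_\sigma)$. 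The needed ideas are present elsewhere in your proposal, so the gap is repairable, but as written the step for these two generators would fail.
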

\begin{proof}
We first show the existence of the \( \mathbb{K} \)-algebra homomorphism \( F \). The \( \mathbb{K} \)-algebra \( \overline{F} \colon \mathbb{K} \langle \Lambda X \rangle \to A \) is defined by
\begin{align*}
&\overline{F}(\xi) = \Upsilon(\xi) \;\; (\xi \in M_{\Lambda}(R) \otimes_{\mathbb{K}} M_{\Lambda}(R)^{op}); \\
&\overline{F}(L_{ab}) = \sum_{\lambda, \mu \in \Lambda} f^+ ( 1_R \otimes 1_R \otimes \mathbf{e}\genfrac{[}{]}{0pt}{}{(\lambda, a)}{(\mu, b)} + \mathfrak{I}_{\mathbf{w}} ) \;\; (a,b \in X); \\
&\overline{F}((L^{-1})_{ab}) =  \sum_{\lambda, \mu \in \Lambda} f^- ( 1_R \otimes 1_R \otimes \mathbf{e}\genfrac{[}{]}{0pt}{}{(\lambda, a)}{(\mu, b)} + \mathfrak{I}_{\mathbf{w}} ).
\end{align*}
Here \( \Upsilon \) is a \( \mathbb{K} \)-algebra homomorphism defined by
\begin{equation*}
\Upsilon \colon M_{\Lambda}(R) \otimes_{\mathbb{K}} M_{\Lambda}(R)^{op} \ni g \otimes h \mapsto \sum_{\lambda, \mu \in \Lambda} f^+( g(\lambda) \otimes h(\mu) \otimes \mathbf{e}\genfrac{[}{]}{0pt}{}{\lambda}{\mu} + \mathfrak{I}_{\mathbf{w}} ) \in A.
\end{equation*}
We prove that \( \overline{F}( I_{\sigma} ) = \{ 0 \} \). It suffices to check that
\begin{equation} \label{F0}
\overline{F}( \alpha ) = 0
\end{equation}
 for every generator \( \alpha \) in \( I_{\sigma} \).
For any \( \alpha \) in the generators (1), the condition \eqref{F0} is obviously satisfied since the map \( \Upsilon \) is a \( \mathbb{K} \)-algebra homomorphism. 
We next prove that the generators (2) satisfy \eqref{F0}. By using \eqref{lem:estu}, 
\begin{align*}
&\overline{F}( \sum_{c \in X} (L^{-1})_{ac} L_{cb} ) \\
=& \sum_{\substack{c \in X \\ \lambda, \mu, \tau, \nu \in \Lambda}} f^-( 1_R \otimes 1_R \otimes \mathbf{e}\genfrac{[}{]}{0pt}{}{(\lambda, a)}{(\mu, c)} + \mathfrak{I}_{\mathbf{w}} ) f^+ \circ \varepsilon_s( 1_{\mathfrak{A}(w_{\sigma})} ) \\
& \times f^+( 1_R \otimes 1_R \otimes \mathbf{e}\genfrac{[}{]}{0pt}{}{(\tau, c)}{(\nu, b)} + \mathfrak{I}_{\mathbf{w}} ) \\
=& \sum_{\substack{c \in X \\ \lambda, \mu, \tau \in \Lambda}} f^-( 1_R \otimes e^{(1)} \otimes \mathbf{e}\genfrac{[}{]}{0pt}{}{(\lambda, a)}{(\mu, c)} + \mathfrak{I}_{\mathbf{w}} )  f^+( e^{(2)} \otimes 1_R \otimes \mathbf{e}\genfrac{[}{]}{0pt}{}{(\mu, c)}{(\tau, b)} + \mathfrak{I}_{\mathbf{w}} ) \\
=& \sum_{\lambda, \mu \in \Lambda} f^+ \circ \varepsilon_s( 1_R \otimes 1_R \otimes \mathbf{e}\genfrac{[}{]}{0pt}{}{(\lambda, a)}{(\mu, b)} + \mathfrak{I}_{\mathbf{w}} ) \\
=& \delta_{a, b} \sum_{\lambda, \mu, \tau \in \Lambda} \delta_{\lambda, \tau \deg(b)^{-1}} f^+( 1_R \otimes e^{(1)} \psi(e^{(2)}) \otimes \mathbf{e}\genfrac{[}{]}{0pt}{}{\mu}{\lambda \deg( a )} + \mathfrak{I}_{\mathbf{w}} ) \\
=& \delta_{a, b} \sum_{\lambda, \mu \in \Lambda} f^+(  1_R \otimes 1_R \otimes \mathbf{e}\genfrac{[}{]}{0pt}{}{\lambda}{\mu \deg(b)^{-1} \deg( a )} + \mathfrak{I}_{\mathbf{w}} ) \\
=& \overline{F}( \delta_{a, b} \emptyset )
\end{align*}
for all \( a, b \in X \). Therefore \( \overline{F}( \sum_{c \in X} (L^{-1})_{ac} L_{cb} - \delta_{a, b} \emptyset ) = 0 \) is satisfied for any \( a, b \in X \). We can prove that \(  \overline{F}( \sum_{c \in X} L_{ac} (L^{-1})_{cb} - \delta_{a, b} \emptyset ) = 0 \) for all \( a, b \in X \) by the similar way.
Let us check that any generator \( \alpha \) in \( (3) \) satisfies \eqref{F0}. For \( g \in M_{\Lambda}( R ) \), \( a \), and \( b \in X \),
\begin{align*}
& \overline{F}( ( T_{\deg(a)}( g ) \otimes 1_{M_{\Lambda}(R)} ) L_{ab} - L_{ab} (g \otimes 1_{M_{\Lambda}(R)}) ) \\
=& \sum_{\lambda, \mu, \tau, \nu \in \Lambda} f^+( (g( \lambda \deg(a) ) \otimes 1_R \otimes \mathbf{e}\genfrac{[}{]}{0pt}{}{\lambda}{\mu})(1_R \otimes 1_R \otimes \mathbf{e}\genfrac{[}{]}{0pt}{}{(\tau, a)}{(\nu, b)}) + \mathfrak{I}_{\mathbf{w}} ) \\
-& \sum_{\gamma, \eta, \theta, \kappa \in \Lambda} f^+( (1_R \otimes 1_R \otimes \mathbf{e}\genfrac{[}{]}{0pt}{}{(\gamma, a)}{(\eta, b)})(g( \theta ) \otimes 1_R \otimes \mathbf{e}\genfrac{[}{]}{0pt}{}{\theta}{\kappa}) + \mathfrak{I}_{\mathbf{w}} ) \\
=& \sum_{\lambda, \mu \in \Lambda} f^+( g( \lambda \deg(a) ) \otimes 1_R \otimes \mathbf{e}\genfrac{[}{]}{0pt}{}{(\lambda, a)}{(\mu, b)} + \mathfrak{I}_{\mathbf{w}} ) \\
-& \sum_{\eta, \theta \in \Lambda} f^+( g( \eta \deg(a) ) \otimes 1_R \otimes \mathbf{e}\genfrac{[}{]}{0pt}{}{(\eta, a)}{(\theta, b)} + \mathfrak{I}_{\mathbf{w}} ) \\
=& 0.
\end{align*}
The proof of \( \overline{F}( ( 1_{M_{\Lambda}(R)} \otimes T_{\deg(b)}(g) ) L_{ab} - L_{ab} ( 1_{M_{\Lambda}(R)} \otimes g ) ) = 0 \; (\forall a, b \in X) \) is similar. To complete the proof of the other two generators in (3), we assume the lemma below for the moment.
\begin{lemm} \label{lem:estA}
For any \( r \in R \) and \( \lambda \in \Lambda \), 
\begin{align}
\sum_{\mu \in \Lambda} \varepsilon_s( 1_R \otimes r \otimes \mathbf{e}\genfrac{[}{]}{0pt}{}{\mu}{\lambda} + \mathfrak{I}_{\mathbf{w}} ) = \sum_{\mu \in \Lambda} 1_R \otimes r \otimes \mathbf{e}\genfrac{[}{]}{0pt}{}{\mu}{\lambda} + \mathfrak{I}_{\mathbf{w}}; \label{lem:es} \\
\sum_{\mu \in \Lambda} \varepsilon_t( r \otimes 1_R \otimes \mathbf{e}\genfrac{[}{]}{0pt}{}{\lambda}{\mu} + \mathfrak{I}_{\mathbf{w}} ) = \sum_{\mu \in \Lambda} r \otimes 1_R \otimes \mathbf{e}\genfrac{[}{]}{0pt}{}{\lambda}{\mu} + \mathfrak{I}_{\mathbf{w}}. \label{lem:et}
\end{align}
\end{lemm}
\noindent
Let \( g \in M_{\Lambda}(R) \), \( a \), and \( b \in X \). By using \eqref{lem:estu}, \eqref{lem:estcomm}, and \eqref{lem:et},
\begin{align*}
&\overline{F}( ( g \otimes 1_{M_{\Lambda}(R)} ) (L^{-1})_{ab} ) \\
=& \sum_{\lambda, \mu, \tau, \nu \in \Lambda} f^+( g(\lambda) \otimes 1_R \otimes \mathbf{e}\genfrac{[}{]}{0pt}{}{\lambda}{\mu} + \mathfrak{I}_{\mathbf{w}} ) f^-( 1_R \otimes 1_R \otimes \mathbf{e}\genfrac{[}{]}{0pt}{}{(\tau, a)}{(\nu, b)} + \mathfrak{I}_{\mathbf{w}} ) \\
=& \sum_{\substack{c \in X \\ \lambda, \mu, \tau, \nu, \gamma \in \Lambda}} f^+ \circ \varepsilon_t( g(\lambda) \otimes 1_R \otimes \mathbf{e}\genfrac{[}{]}{0pt}{}{\lambda}{\mu} + \mathfrak{I}_{\mathbf{w}} ) f^+ \circ \varepsilon_s( 1_R \otimes e^{(1)} \otimes \mathbf{e}\genfrac{[}{]}{0pt}{}{(\tau, a)}{(\gamma, c)} + \mathfrak{I}_{\mathbf{w}} ) \\
& \times f^-( e^{(2)} \otimes 1_R \otimes \mathbf{e}\genfrac{[}{]}{0pt}{}{(\gamma, c)}{(\nu, b)} + \mathfrak{I}_{\mathbf{w}} ) \\
=& \sum_{\substack{c, d \in X \\ \lambda, \mu, \tau, \nu \in \Lambda}} f^-( 1_R \otimes e^{(1)} \otimes \mathbf{e}\genfrac{[}{]}{0pt}{}{(\lambda, a)}{(\mu, c)} + \mathfrak{I}_{\mathbf{w}} )  \\
& \times f^+( e^{(2)} g( \mu \deg(c) ) \otimes e^{(1)^{\prime \prime}} e^{(1)^{\prime}} \otimes \mathbf{e}\genfrac{[}{]}{0pt}{}{(\mu, c)}{(\tau, d)} + \mathfrak{I}_{\mathbf{w}} ) \\ 
& \times f^-( e^{(2)^{\prime}} e^{(2)^{\prime \prime}} \otimes 1_R \otimes \mathbf{e}\genfrac{[}{]}{0pt}{}{(\tau, d)}{(\nu, b)} + \mathfrak{I}_{\mathbf{w}} ) \\
=& \sum_{\substack{c \in X \\ \lambda, \mu, \tau \in \Lambda}} f^-( 1_R \otimes e^{(1)} \otimes \mathbf{e}\genfrac{[}{]}{0pt}{}{(\lambda, a)}{(\mu, c)} + \mathfrak{I}_{\mathbf{w}} ) \\
& \times  f^+ \circ \varepsilon_t( e^{(2)} g( \mu \deg(c) ) \otimes 1_R \otimes \mathbf{e}\genfrac{[}{]}{0pt}{}{(\mu, c)}{(\tau, b)} + \mathfrak{I}_{\mathbf{w}} ) \\ 
=& \sum_{\substack{c \in X \\ \lambda, \mu, \tau, \nu \in \Lambda}} \delta_{\mu, \tau} \delta_{b, c} f^-( 1_R \otimes e^{(1)} \otimes \mathbf{e}\genfrac{[}{]}{0pt}{}{(\lambda, a)}{(\mu, c)} + \mathfrak{I}_{\mathbf{w}} ) \\
& \times f^+( \psi( e^{(2)} g( \mu \deg(c) ) e^{(1)^{\prime}} ) e^{(2)^{\prime}} \otimes 1_R \otimes \mathbf{e}\genfrac{[}{]}{0pt}{}{\tau}{\nu} + \mathfrak{I}_{\mathbf{w}} ) \\
=& \sum_{\lambda, \mu, \tau \in \Lambda} f^-( 1_R \otimes e^{(1)} \otimes \mathbf{e}\genfrac{[}{]}{0pt}{}{(\lambda, a)}{(\mu, b)} + \mathfrak{I}_{\mathbf{w}} ) f^+( e^{(2)} g( \mu \deg(b) ) \otimes 1_R \otimes \mathbf{e}\genfrac{[}{]}{0pt}{}{\mu}{\tau} + \mathfrak{I}_{\mathbf{w}} ) \\
=& \sum_{\lambda, \mu, \tau, \nu \in \Lambda}  f^-( 1_R \otimes 1_R \otimes \mathbf{e}\genfrac{[}{]}{0pt}{}{(\lambda, a)}{(\mu, b)} + \mathfrak{I}_{\mathbf{w}} ) f^+ \circ \varepsilon_s( 1_{\mathfrak{A}(w_{\sigma})} ) \\
& \times f^+( g( \tau \deg(b) ) \otimes 1_R \otimes \mathbf{e}\genfrac{[}{]}{0pt}{}{\tau}{\nu} + \mathfrak{I}_{\mathbf{w}} ) \\
=& \overline{F}( (L^{-1})_{ab} ( T_{\deg(b)}( g ) \otimes 1_{M_{\Lambda}(R)} ) )
\end{align*}
\( \overline{F}( (1_{M_{\Lambda}(R)}\otimes g)(L^{-1})_{ab} - (L^{-1})_{ab}(1_{M_{\Lambda}(R)}\otimes T_{\deg (a)}(g)) ) = 0 \; (\forall a, b \in X) \) is also induced by using \eqref{lem:estu}, \eqref{lem:estcomm}, and \eqref{lem:es}.
We give a proof of \eqref{F0} for any generator \( \alpha \) in (4). For all \( a \), \( b \), \( c \), and \( d \in X \),
\begin{align*}
&\overline{F}( \sum_{x,y\in X} (\sigma^{xy}_{ac}\otimes1_{M_{\Lambda}(R)})L_{yd}L_{xb} - \sum_{x,y\in X} (1_{M_{\Lambda}(R)}\otimes\sigma^{bd}_{xy})L_{cy}L_{ax} ) \\
=& \sum_{\substack{\lambda. \mu \in \Lambda \\ x, y \in X}} f^+( \sigma^{xy}_{ac}( \lambda ) \otimes 1_R \otimes \mathbf{e}\genfrac{[}{]}{0pt}{}{((\lambda, y), (\lambda \deg(y), x))}{((\mu, d), (\mu \deg(d), b))} + \mathfrak{I}_{\mathbf{w}} ) \\
-& \sum_{\substack{\tau. \nu \in \Lambda \\ x, y \in X}} f^+( 1_R \otimes \sigma^{bd}_{xy}( \nu ) \otimes \mathbf{e}\genfrac{[}{]}{0pt}{}{((\tau, c), (\tau \deg(c), a))}{((\nu, y), (\nu \deg(y), x))} + \mathfrak{I}_{\mathbf{w}} ) \\
=& \sum_{\substack{\lambda. \mu, \eta \in \Lambda \\ x, y \in X}} f^+( \mathbf{w}\begin{sumibmatrix} & (\lambda, y) & \\ (\eta, c) & & (\lambda \deg(y), x) \\ & (\eta \deg(c), a) & \end{sumibmatrix} \otimes 1_R \otimes \mathbf{e}\genfrac{[}{]}{0pt}{}{((\lambda, y), (\lambda \deg(y), x))}{((\mu, d), (\mu \deg(d), b))} + \mathfrak{I}_{\mathbf{w}} ) \\
-& \sum_{\substack{\tau. \nu, \theta \in \Lambda \\ x, y \in X}} f^+( 1_R \otimes \mathbf{w}\begin{sumibmatrix} & (\theta, d) & \\ (\nu, y) & & (\theta \deg(d), b) \\ & (\nu \deg(y), x) & \end{sumibmatrix} \otimes \mathbf{e}\genfrac{[}{]}{0pt}{}{((\tau, c), (\tau \deg(c), a))}{((\nu, y), (\nu \deg(y), x))} + \mathfrak{I}_{\mathbf{w}} ) \\
=& 0.
\end{align*}
Here we use the setting \eqref{wsig} to show the second equality.
We can easily induce that the generator (5) satisfies \eqref{F0} because of \( 1_{\mathfrak{A}(w_{\sigma})} = \displaystyle\sum_{\lambda, \mu \in \Lambda}  1_R \otimes 1_R \otimes \mathbf{e}\genfrac{[}{]}{0pt}{}{\lambda}{\mu} + \mathfrak{I}_{\mathbf{w}} \). Hence the \( \mathbb{K} \)-algebra homomorphism \( F( \alpha + I_{\sigma} ) = \overline{F}( \alpha ) \; ( \alpha \in \mathbb{K} \langle \Lambda X \rangle ) \) is well defined.

We next show that \( f^+ = F \circ \Phi \). Since these three maps \( f^+ \), \( F \) and \( \Phi \) are \( \mathbb{K} \)-algebra homomorphisms, it is sufficient to prove that
\begin{equation*}
f^+( r \otimes r^{\prime} \otimes \mathbf{e}\genfrac{[}{]}{0pt}{}{(\lambda, a)}{(\mu, b)} + \mathfrak{I}_{\mathbf{w}} ) = F \circ \Phi( r \otimes r^{\prime} \otimes \mathbf{e}\genfrac{[}{]}{0pt}{}{(\lambda, a)}{(\mu, b)} + \mathfrak{I}_{\mathbf{w}} )
\end{equation*}
for all \( r, r^{\prime} \in R \), \( (\lambda, a) \), and \( (\mu, b) \in Q \). We can evaluate that
\begin{align*}
&F \circ \Phi( r \otimes r^{\prime} \otimes \mathbf{e}\genfrac{[}{]}{0pt}{}{(\lambda, a)}{(\mu, b)} + \mathfrak{I}_{\mathbf{w}} ) \\
=& f^+( r \otimes r^{\prime} \otimes \mathbf{e}\genfrac{[}{]}{0pt}{}{\lambda}{\mu} + \mathfrak{I}_{\mathbf{w}} ) ( \sum_{\tau, \nu \in \Lambda} f^+( 1_R \otimes 1_R \otimes \mathbf{e}\genfrac{[}{]}{0pt}{}{(\tau, a)}{(\nu, b)} + \mathfrak{I}_{\mathbf{w}} ) ) \\
=& f^+( r \otimes r^{\prime} \otimes \mathbf{e}\genfrac{[}{]}{0pt}{}{(\lambda, a)}{(\mu, b)} + \mathfrak{I}_{\mathbf{w}} ).
\end{align*}
We give a proof of the uniqueness of \( F \). Let \( F^{\prime} \) be a \( \mathbb{K} \)-algebra homomorphism such that \(  f^+ = F^{\prime} \circ \Phi \). We see at once that \( F^{\prime}( g \otimes h + I_{\sigma} ) = F( g \otimes h + I_{\sigma} ) \) and \( F^{\prime}( L_{ab} + I_{\sigma} ) = F( L_{ab} + I_{\sigma} ) \) for all \( g, h \in M_{\Lambda}(R) \), \( a \), and \( b \in X \). We denote by \( S^{{\rm WHA}} \) the antipode of the weak Hopf algebra \( A_{\sigma} \). According to Proposition \ref{prop:WHD}, this \( S^{{\rm WHA}} \) satisfies that \( S^{{\rm WHA}}( L_{ab} + I_{\sigma} ) = (L^{-1})_{ab} + I_{\sigma} \) for any \( a, b \in X \). Therefore we compute that
\begin{align*}
F^{\prime}( (L^{-1})_{ab} + I_{\sigma} ) &= F^{\prime} \circ S^{{\rm WHA}}( L_{ab} + I_{\sigma} ) \\
&= \sum_{\lambda, \mu \in \Lambda} F^{\prime} \circ S^{{\rm WHA}} \circ \Phi( 1_R \otimes 1_R \otimes \mathbf{e}\genfrac{[}{]}{0pt}{}{(\lambda, a)}{(\mu, b)} + \mathfrak{I}_{\mathbf{w}} ).
\end{align*}
Let us prove that the map \( \tilde{f} := F^{\prime} \circ S^{{\rm WHA}} \circ \Phi \) is the antipode of \( f^+ \). For all \( \alpha \in \mathfrak{A}(w_{\sigma}) \),
\begin{align*}
(\tilde{f} \star f^+)( \alpha )
&= F^{\prime}( S^{{\rm WHA}} ( \Phi( \alpha_{(1)} ) ) \Phi( \alpha_{(2)} ) ) \\
&= F^{\prime}( S^{{\rm WHA}} ( \Phi( \alpha )_{(1)} ) \Phi( \alpha )_{(2)} ) \\
&= F^{\prime}( \varepsilon_s \circ \Phi( \alpha ) ) \\
&= F^{\prime} \circ \Phi \circ \varepsilon_s( \alpha ) ) \\
&= f^+ \circ \varepsilon_s( \alpha )
\end{align*}
The proof for \( f^+ \star \tilde{f} = f^+ \circ \varepsilon_t \) and \( \tilde{f} \star f^+ \star \tilde{f} = \tilde{f} \) is similar. Thus \( \tilde{f} \) is the antipode of \( f^+ \). We can induce that \( \tilde{f} = f^- \) because of the uniqueness of the antipode. Hence \( F^{\prime} = F \) is satisfied.

Finally, we show that \( F \) is a weak bialgebra homomorphism if \( A \) is a weak bialgebra and \( f^{+} \) is a weak bialgebra homomorphism. Let us prove that \( F \) is comultiplicative. Since \( \Delta_{A_{\sigma}} \) and \( \Delta_{A} \) satisfy \eqref{def:wdm}, it suffices to check that \( (F \otimes F) \circ \Delta_{A_{\sigma}}( \alpha + I_{\sigma} ) = \Delta_A \circ F( \alpha + I_{\sigma} ) \). Here, 
\begin{equation*}
\alpha =
\begin{cases}
	g \otimes h \;\; (\forall g, h \in M_{\Lambda}(R)); \\
	L_{ab}; \\
	(L^{-1})_{ab} \;\; (\forall a, b \in X).
\end{cases}
\end{equation*}
If \( \alpha = g \otimes h \; (\forall g, h \in M_{\Lambda}(R)) \), 
\begin{align*}
&(F \otimes F) \circ \Delta_{A_{\sigma}}( g \otimes h + I_{\sigma} ) \\
=& \sum_{\lambda, \mu, \tau \in \Lambda} f^+( g( \lambda ) \otimes e^{(1)} \otimes \mathbf{e}\genfrac{[}{]}{0pt}{}{\lambda}{\tau} + \mathfrak{I}_{\mathbf{w}} ) \otimes  f^+( e^{(2)} \otimes h( \mu ) \otimes \mathbf{e}\genfrac{[}{]}{0pt}{}{\tau}{\mu} + \mathfrak{I}_{\mathbf{w}} ) \\
=& \sum_{\lambda, \mu \in \Lambda} \Delta_A \circ f^+ ( g( \lambda ) \otimes h( \mu ) \otimes \mathbf{e}\genfrac{[}{]}{0pt}{}{\lambda}{\mu} + \mathfrak{I}_{\mathbf{w}} ) \\
=& \Delta_A \circ F( g \otimes h + I_{\sigma} ).
\end{align*}
The proof for \( \alpha = L_{ab} \; (\forall a, b \in X) \) is similar. Let us suppose that \( \alpha = (L^{-1})_{ab} \) for any \( a \) and \( b \in X \). Since \( f^- \colon \mathfrak{A}(w_{\sigma}) \to A^{bop} \) is a weak bialgebra homomorphism, we can induce that
\begin{align*}
&(F \otimes F) \circ \Delta_{A_{\sigma}}( (L^{-1})_{ab} + I_{\sigma} ) \\
=& \sum_{\substack{c \in X \\ \lambda, \mu, \tau, \nu \in \Lambda}} \Delta_{A}( 1_A ) \\
&\times f^-( 1_R \otimes 1_R \otimes \mathbf{e}\genfrac{[}{]}{0pt}{}{(\lambda, c)}{(\mu, b)} + \mathfrak{I}_{\mathbf{w}} ) \otimes  f^-( 1_R \otimes 1_R \otimes \mathbf{e}\genfrac{[}{]}{0pt}{}{(\tau, a)}{(\nu, c)} + \mathfrak{I}_{\mathbf{w}} ) \\
=& \sum_{\substack{c \in X \\ \lambda, \mu, \tau, \nu \in \Lambda}} (f^- \otimes f^-) \circ \Delta_{\mathfrak{A}(w_{\sigma})}^{op}( 1_{\mathfrak{A}(w_{\sigma})} ) \\
& \times f^-( 1_R \otimes 1_R \otimes \mathbf{e}\genfrac{[}{]}{0pt}{}{(\lambda, c)}{(\mu, b)} + \mathfrak{I}_{\mathbf{w}} ) \otimes  f^-( 1_R \otimes 1_R \otimes \mathbf{e}\genfrac{[}{]}{0pt}{}{(\tau, a)}{(\nu, c)} + \mathfrak{I}_{\mathbf{w}} ) \\
=& \sum_{\lambda, \mu, \tau \in \Lambda} f^-( e^{(2)} \otimes 1_R \otimes \mathbf{e}\genfrac{[}{]}{0pt}{}{(\tau, c)}{(\mu, b)} + \mathfrak{I}_{\mathbf{w}} ) \otimes  f^-( 1_R \otimes e^{(1)} \otimes \mathbf{e}\genfrac{[}{]}{0pt}{}{(\lambda, a)}{(\tau, c)} + \mathfrak{I}_{\mathbf{w}} ) \\
=& \sum_{\lambda, \mu \in \Lambda} \Delta_A \circ f^-( 1_R \otimes 1_R \otimes \mathbf{e}\genfrac{[}{]}{0pt}{}{(\lambda, a)}{(\mu, b)} + \mathfrak{I}_{\mathbf{w}} ) \\
=& \Delta_A \circ F( (L^{-1})_{ab} + I_{\sigma} ).
\end{align*}
Hence the map \( F \) is comultiplicative. We prove that \( F \) preseves the counit. Since the counit satisfies \eqref{def:cum}, it suffices to show that 
\begin{align}
\varepsilon_A \circ F( (g \otimes h) L_{ab} + I_{\sigma} ) = \varepsilon_{A_{\sigma}}( (g \otimes h) L_{ab} + I_{\sigma} ); \label{ug+} \\
\varepsilon_A \circ F( (g \otimes h) (L^{-1})_{ab} + I_{\sigma} ) = \varepsilon_{A_{\sigma}}( (g \otimes h) (L^{-1})_{ab} + I_{\sigma} ) \label{ug-}
\end{align}
for any \( g, h \in M_{\Lambda}(R) \), \( a \), and \( b \in X \). For (\ref{ug+}), we can evaluate that
\begin{align*}
\varepsilon_A \circ F( (g \otimes h) L_{ab} + I_{\sigma} ) &= \sum_{\lambda, \mu \in \Lambda} \varepsilon_{\mathfrak{A}(w_{\sigma})}( g( \lambda ) \otimes h( \mu ) \otimes \mathbf{e}\genfrac{[}{]}{0pt}{}{\lambda}{\mu} + \mathfrak{I}_{\mathbf{w}} ) \\
&= \sum_{\lambda, \mu \in \Lambda} \delta_{\lambda, \mu} \delta_{a, b} \phi( g( \lambda ) h( \mu ) ) \\
&= \delta_{a, b} \sum_{\lambda \in \Lambda} \phi( (gh)( \lambda ) ) \\
&= \varepsilon_{A_{\sigma}}( (g \otimes h) L_{ab} + I_{\sigma} ).
\end{align*}
We can use the similar way to prove \eqref{ug-}. This completes the proof.
\end{proof}

\begin{proof}[{\bf Proof of Lemma \ref{lem:estA}}]
We give the proof only for \eqref{lem:es}. For any \( r \in R \) and \( \lambda \in \Lambda \), 
\begin{align*}
\sum_{\mu \in \Lambda} \varepsilon_s( 1_R \otimes r \otimes \mathbf{e}\genfrac{[}{]}{0pt}{}{\mu}{\lambda} + \mathfrak{I}_{\mathbf{w}} )
&= \sum_{\mu, \tau \in \Lambda} \delta_{\lambda, \tau} 1_R \otimes e^{(1)} \psi( e^{(2)} r ) \otimes \mathbf{e}\genfrac{[}{]}{0pt}{}{\mu}{\tau} + \mathfrak{I}_{\mathbf{w}} \\
&= \sum_{\mu \in \Lambda} 1_R \otimes r \otimes \mathbf{e}\genfrac{[}{]}{0pt}{}{\mu}{\lambda} + \mathfrak{I}_{\mathbf{w}}.
\end{align*}
This is the desired conclusion.
\end{proof}

\begin{cor}
If \( \sigma \) is rigid, then the pair \( (A_{\sigma}, \Phi) \) is the Hopf closure of \( \mathfrak{A}(w_{\sigma}) \).
\end{cor}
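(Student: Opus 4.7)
The plan is to observe that this corollary is essentially an immediate consequence of the preceding theorem, once we identify the three ingredients required by Definition \ref{def:WHC}: (i) that $A_{\sigma}$ is a weak Hopf algebra, (ii) that $\Phi$ is a weak bialgebra homomorphism, and (iii) that the universal property holds for arbitrary weak bialgebra targets. All three will be read off from results already established earlier in the paper.

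First I would verify (i). Since $R$ is assumed Frobenius-separable with idempotent Frobenius system $(\psi, e^{(1)} \otimes e^{(2)})$ and $\Lambda$ is finite, the algebra $M_{\Lambda}(R)$ is again Frobenius-separable, with system given by $f \mapsto \sum_{\lambda \in \Lambda} \psi( f(\lambda) )$ and dual basis $\sum_{\lambda \in \Lambda} (e^{(1)} \delta_{\lambda}) \otimes (e^{(2)} \delta_{\lambda})$. Rigidity of $\sigma$ makes $(A_{\sigma}, S)$ a Hopf algebroid by the proposition cited from \cite{oshibu}, so Proposition \ref{prop:WHD} promotes the total algebra $A_{\sigma}$ to a weak Hopf algebra. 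Likewise, Proposition \ref{prop:LWF} equips $\mathfrak{A}(w_{\sigma})$ with a weak bialgebra structure.

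Next I would check (ii). Because the left bialgebroid homomorphism constructed in Theorem \ref{theo:wh} is the pair $(\Phi, \mathrm{id}_{M_{\Lambda}(R)})$ over the \emph{same} base algebra $M_{\Lambda}(R)$, the explicit formulas in Proposition \ref{prop:LWF} for the induced weak bialgebra comultiplication and counit, namely
\begin{equation*}
\Delta(a) = t_{M_{\Lambda}(R)}(e^{(1)}) a_{[1]} \otimes s_{M_{\Lambda}(R)}(e^{(2)}) a_{[2]}, \quad \varepsilon(a) = \psi \circ \pi_{M_{\Lambda}(R)}(a),
\end{equation*}
transport across $\Phi$ using \eqref{def:sp}, \eqref{def:tp}, \eqref{def:piP}, \eqref{def:Dp}. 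This is a routine check and shows that $\Phi$ is a weak bialgebra homomorphism.

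Finally, (iii) is precisely the content of the theorem just proven: given any weak bialgebra $B$ and weak bialgebra homomorphism $f^{+} \colon \mathfrak{A}(w_{\sigma}) \to B$ with antipode $f^{-}$, there exists a unique $\mathbb{K}$-algebra homomorphism $F \colon A_{\sigma} \to B$ with $f^{+} = F \circ \Phi$, and the second half of that theorem asserts that $F$ is itself a weak bialgebra homomorphism. No step of the argument is a serious obstacle: the only thing one might worry about is that the theorem was phrased for \emph{any} $\mathbb{K}$-algebra target admitting an antipode for $f^{+}$, which is a priori weaker than demanding $B$ be a weak Hopf algebra, but this only makes our conclusion stronger and in particular covers the case required by Definition \ref{def:WHC}. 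Uniqueness of $F$ up to isomorphism of the closure follows from the universal property itself.
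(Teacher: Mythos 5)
Your proposal is correct and follows exactly the route the paper intends: the corollary is stated without proof because it is read off from the preceding theorem together with Definition \ref{def:WHC}, and your items (i)--(iii) are precisely the implicit checks (that $M_{\Lambda}(R)$ inherits a Frobenius-separable structure so Propositions \ref{prop:LWF} and \ref{prop:WHD} apply, that $\Phi$ respects the induced weak bialgebra structures via \eqref{def:sp}--\eqref{def:Dp}, and that the theorem supplies the universal property). Your explicit idempotent Frobenius system for $M_{\Lambda}(R)$ is the one the paper tacitly uses in its $\varepsilon_s$ computations, so nothing is missing.
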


\begin{ack*}
The auther is deeply grateful to Professor Youichi Shibukawa for his helpful advice.
\end{ack*}


\end{document}